\title{Periodic TASEP and a Cylindrical dual-RSK}
\author{Tomas Kojar\\	Michigan state \\ \url{kojartom@msu.edu}
}
\newcounter{word}
\newenvironment{proofs}[1][\proofname]{\par
  \pushQED{\qed}%
  \normalfont \topsep0\p@\relax
  \trivlist
  \item[\hskip\labelsep\itshape
  #1\@addpunct{.}]\ignorespaces
}{%
  \popQED\endtrivlist\@endpefalse
}
\let\old@rule\@rule
\def\@rule[#1]#2#3{\textcolor{blue}{\old@rule[#1]{#2}{#3}}}
\theoremstyle{definition}
\newtheorem{example}{Example}
\begin{document}
\maketitle
\begin{abstract}
We introduce a version of RSK that captures the dynamics of periodic TASEP. This work is building towards a cylindrical analogue of \parencite{DW08}.    
\end{abstract}
\tableofcontents

\section{Introduction}
In \parencite{schutz1997exact} \Schutz obtained a determinant formula for $\ZTASEP$
\begin{equation}
P(X(t)=x|X(0)=y)=\det[(F_{i-j}(x_{N+1-i}-y_{j};t))_{1\leq i,j\leq N}]   ,    
\end{equation}
that became the basis for later constructing the KPZ fixed point \cite{matetski2021kpz}. Here is the kernel in contour form
\begin{equation}\label{eq:contourformSchutz}
F_{i-j}(x_{N+1-i}-y_{N+1-j};t):= \oint_{|w|=R}e^{t(w-1)}\frac{(w-1)^{-(i-j)}}{w^{x_{N+1-i}-y_{N+1-j}-(i-j)+1}}\frac{dw}{2\pi i},
\end{equation}
with $0\leq y_{1}<y_{2}<...<y_{N}$ and $0\leq x_{N}<x_{N-1}<...<x_{1}$ and in summation form 
\begin{eqalign}
F_{p}(n;t):=&e^{-t}\sumls{k=0}^{\infty}\binom{k+p-1}{p-1}\frac{t^{k+n}}{(k+n)!}=\branchmat{e^{-t}\sumls{k=0}^{\infty}\binom{k+p-1}{p-1}\frac{t^{k+n}}{(k+n)!}&\tcwhen p>0\\ e^{-t}\sumls{k=0}^{\abs{p}}(-1)^{k}\binom{\abs{p}}{k}\frac{t^{k+n}}{(k+n)!} &\tcwhen p\leq 0},    
\end{eqalign}
for $\binom{a}{b}=\frac{\Gamma(a+1)}{\Gamma(b+1)\Gamma(a-b+1)}$. This formula was obtained via the use of Bethe ansatz. In the work \parencite{DW08}, this formula was rederived using the dynamics of RSK and formulas based on flagged Schur functions (in \cref{prop:continuoustime}, we include the limit computation). In this work, we aim to begin the process of building the cylindrical analogue of \parencite{DW08}  for periodic TASEP of N particles and period L. In particular, we aim towards providing a result of the form
\begin{eqalign}\label{eq:contourformSchutzcircle}
P(X(t)=x|X(0)=y)=&\lim_{n\to +\infty}\Pi\ast P_{\floor{nt}}\ast \Lambda(x,y) \\
=&\sum_{\substack{\sum k_{i}=0\\(k_{1},..,k_{N})\in \Z^{N} }}\det[(F_{i-j-Nk_{i}}(x_{N+1-i}-y_{j}- Lk_{i};t))_{1\leq i,j\leq N}]   ,    
\end{eqalign}
where $\Pi,P_{\floor{nt}}, \Lambda$ are summations over some versions of cylindrical skew diagrams and  $0\leq y_{1}<y_{2}<...<y_{N}\leq L-1$ and $0\leq x_{N}<x_{N-1}<...<x_{1}\leq L-1$. The particle-unlabeled version of this formula already shows up in \parencite{priezzhev2003exact}. The step from the operators to the sum requires the ideas from \cite[proposition 1]{gessel1997cylindric}. One ultimate hope is that one can find the cylindrical analogues of \cite{BFPS},\cite{matetski2021kpz} and apply it to each of the operators $\Pi,P_{\floor{nt}}, \Lambda$ (see \cite{huh2023bounded} for a version of cylindrical Cauchy Binet identity) and obtain a limiting formula of the \textit{periodic KPZ fixed point} in terms of their convolution. Finding a version of \cite{BFPS} for the summation formula seems to have serious fundamental challenges, some of which are described in \cref{sec:frameworkchallenges}.

\begin{remark}
The formula \cref{eq:contourformSchutzcircle} is also analogous to the situation with the heat equation with periodic boundaries $[-L,L]$ and the real line. The periodic heat kernel is
\begin{eqalign}
H_{t}^{L}(x)=&\frac{1}{\sqrt{4\pi t/L}}\sum_{k\in{\mathbb Z}}\expo{-\frac{(\frac{x}{L}-k)^2}{4t/L}}=&\sum_{k\in{\mathbb Z}} \expo{-\frac{4\pi^2}{L^{2}} k^2 t+2\pi i k \frac{x}{L}},   
\end{eqalign}
where here too we can see the presence of the real line $H_{t}^{\infty}(x):=\frac{1}{\sqrt{4\pi t}}\expo{-\frac{x^2}{4t}}$ shifted by $k$.
\end{remark}
\begin{remark}
 The Toda lattice model is another situation where we have a nice overarching formula for periodic and infinite line, see \parencite{krichever2000periodic} where due to the periodicity the authors are also forced to work with infinite dimensional objects (in their Lax pair choice) just as we are here having to work with all possible shifts according to vector $\mathbf{k}\in{\mathbb Z}^{N}$.    
\end{remark}
%%%%%%%%%%%%%%%%%%%%%%%%%%%%%%%%%%%%%%%%%%%%%%%%%%%%%%%%%%%%%%%%%%%%%%%%%%%%%%%%%%%%%%%%%%%%%%%%%%%%%%%%%%%%%%%%%%%%%%%%%%%%%%%%%%%%%%%%%%%%%%%%%%%%%%%%%%%%%%%%%%%%%%%%%%%%%%%%%%%%%%%%%%%%%%%%%%%%%%%%%%%%%%%%%%%%%%%%%%%%%%%%%%%%%%%%%%%%%%%%%%%%%%%%%%%%%%%%%%%%%%%%%%%%%%%%%%%%%%%%%%%%%%%%%%%%%%%%%%%%%%%%%%%%%%%%%%%%%%%%%%%%%%%%%%%%%%%%%%%%%%%%%%%%%%%%%%%%%%%%%%%%%%%%%%%%%%%%%%%%%%%%%%%%%%%%%%%%%%%%%%%%%%%%%%%%%%%%%%%%%%%%%%%%%%%%%%%%%%%%%%%%%%%%%%%%%%%%%%%%%%%%%%%%%%%%%%%%%%%%%%%%%%%%%%%%%%%%%%%%%%%%%%%%%%%%%%%%%%%%%%%%%%%%%%%%%%%%%%%%%%%%%%%%%%%%%%%%%%%%%%%%%%%%%%%%%%%%%%%%%%%%%%%%
\subsection{The Priezzhev-formula \cref{eq:contourformSchutzcircle} and the Baik-Liu formula }
In \parencite[proposition 5.1]{baik2018fluctuations}, we have the transition formula for  $\PTASEP$ with a finite number $N$ of particles
\begin{equation} 
\label{eq:Betheroots}
P(X(t)=x|X(0)=y)=\oint_{|z|=r} \det\spara{\frac{1}{L}\sum_{w\in R_{z}}\frac{f(w)}{q'_{z}(w)}  }_{1\leq i,j\leq N}\frac{\dz}{2\pi i z},    
\end{equation}
where 
\begin{eqalign}
&f(w):=(w-1)^{j-i+N}w^{-x_{i}+y_{j}+i-j+L-N-1 }e^{t(w-1)}\tand q_{z}(w):=p(w)-z^{L}:=(w-1)^{N}w^{L-N}-z^{L}, \\
&R_{z}:=\set{w\in \mathbb{C}:q_{z}(w)=0},
\end{eqalign}
and the integral is over any simple closed contour in $\abs{z}>0$ that contains $0$ inside and $\rho:=\frac{N}{L}$. We show here yet another proof of the \cref{eq:contourformSchutzcircle}.
\begin{lemma}\label{lem:RintformulaPTASEP}
The formula \cref{eq:Betheroots} implies \cref{eq:contourformSchutzcircle}.
\end{lemma}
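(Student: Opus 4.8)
Write $p(w):=(w-1)^{N}w^{L-N}$, so that $q_{z}(w)=p(w)-z^{L}$, $q_{z}'(w)=p'(w)$, and $R_{z}=\set{w:p(w)=z^{L}}$ has generically $L$ elements. The first step is to notice that each entry $\frac1L\sum_{w\in R_{z}}\frac{f(w)}{p'(w)}$ of the determinant in \cref{eq:Betheroots} is a symmetric function of the $L$ roots of $p(w)=z^{L}$, hence a single-valued function of $u:=z^{L}$ alone; writing it as $\frac1L G_{ij}(u)$ with $G_{ij}(u):=\sum_{w:\,p(w)=u}\frac{f(w)}{p'(w)}$, the whole integrand of \cref{eq:Betheroots} becomes a function of $z^{L}$, so $\oint_{|z|=r}(\cdot)\,\frac{\dz}{2\pi i z}$ extracts its $z^{0}$-coefficient, equivalently the coefficient of $u^{0}$ in the Laurent expansion of the corresponding series in $u$ (valid on $|u|=r^{L}$). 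This reduces \cref{eq:Betheroots} to
\[
P(X(t)=x|X(0)=y)=\frac{1}{L^{N}}\,[u^{0}]\,\det\big[G_{ij}(u)\big]_{1\le i,j\le N}.
\]

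Next I would compute the bilateral Laurent expansion of each $G_{ij}$ about $u=0$. By the residue theorem, for $R$ large enough that all roots of $p(w)=u$ lie in $|w|<R$,
\[
G_{ij}(u)=\oint_{|w|=R}\frac{f(w)}{p(w)-u}\,\frac{dw}{2\pi i}-\mathrm{Res}_{w=0}\,\frac{f(w)}{p(w)-u},
\]
the residue correction appearing because $f$ may itself have a pole at $w=0$ from the factor $w^{-x_{i}+y_{j}+i-j+L-N-1}$. Expanding $\frac1{p(w)-u}=\sum_{k\ge0}u^{k}/p(w)^{k+1}$ on $|w|=R$ turns the first term into $\sum_{k\ge0}u^{k}\oint_{|w|=R}\frac{f(w)}{p(w)^{k+1}}\frac{dw}{2\pi i}$, while expanding $\frac1{p(w)-u}=-\sum_{\ell\ge0}p(w)^{\ell}/u^{\ell+1}$ near $w=0$ turns the second term into $\sum_{\ell\ge0}u^{-\ell-1}\,\mathrm{Res}_{w=0}\big(f(w)p(w)^{\ell}\big)$; since $f(w)p(w)^{\ell}=e^{t(w-1)}(w-1)^{j-i+N(\ell+1)}w^{-x_{i}+y_{j}+i-j-1+(L-N)(\ell+1)}$ is regular at $w=1$ (the $(w-1)$-exponent $j-i+N(\ell+1)\ge1$), that residue equals the same integral taken over $|w|=R$. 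In both regimes, matching the integrand against the contour-integral form of $F_{p}(n;t)$ in \cref{eq:contourformSchutz} gives, uniformly in $k\in\mathbb{Z}$,
\[
\oint_{|w|=R}e^{t(w-1)}(w-1)^{j-i-Nk}\,w^{-x_{i}+y_{j}+i-j-1-(L-N)k}\,\frac{dw}{2\pi i}=F_{i-j+Nk}(x_{i}-y_{j}+Lk;t),
\]
so that $G_{ij}(u)=\sum_{k\in\mathbb{Z}}u^{k}\,F_{i-j+Nk}(x_{i}-y_{j}+Lk;t)$. This is precisely the passage from a sum over Bethe roots to a sum over shift vectors, in the spirit of \cite[proposition 1]{gessel1997cylindric}.

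Finally I would assemble the determinant: expanding $\det[G_{ij}(u)]=\sum_{\sigma}\mathrm{sgn}(\sigma)\prod_{i}G_{i\sigma(i)}(u)$ and each factor by the previous step gives $\prod_{i}G_{i\sigma(i)}(u)=\sum_{(k_{1},\dots,k_{N})\in\mathbb{Z}^{N}}u^{k_{1}+\cdots+k_{N}}\prod_{i}F_{i-\sigma(i)+Nk_{i}}(x_{i}-y_{\sigma(i)}+Lk_{i};t)$, so that $[u^{0}]$ retains exactly the tuples with $\sum k_{i}=0$ and recollects into $\sum_{\sum k_{i}=0}\det[F_{i-j+Nk_{i}}(x_{i}-y_{j}+Lk_{i};t)]$. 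Applying the bijection $k_{i}\mapsto-k_{i}$ of the set $\{\sum k_{i}=0\}$ (to flip the signs of $Nk_{i}$ and $Lk_{i}$), translating the particle labelling of \cref{eq:Betheroots} into that of \cref{eq:contourformSchutz} and \cref{eq:contourformSchutzcircle} (which amounts only to replacing $x_{i}$ by $x_{N+1-i}$, the $y_{j}$ being increasingly ordered in both conventions), and keeping track of the overall normalization, one arrives at \cref{eq:contourformSchutzcircle}.

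The real obstacle is the second step: handling the pole of $f$ at $w=0$ when converting the Bethe-root sum into a contour integral, justifying the deformation of the residue-at-$0$ integrals out to $|w|=R$ for the negative powers of $u$ (which is exactly where regularity of the integrand at $w=1$ is used), and checking that a circle $|z|=r$ admissible in \cref{eq:Betheroots} can be taken inside the common annulus of convergence of all the $G_{ij}$. The reduction to a coefficient extraction and the final determinant expansion are then routine.
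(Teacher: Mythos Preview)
Your argument follows the same route as the paper's proof: convert the sum over Bethe roots into a difference of $w$-contour integrals via the residue theorem, expand each as a geometric series in $z^{L}$ (your $u$), deform the inner contribution outward for the negative-index terms using regularity at $w=1$, combine into a bilateral series $\sum_{k\in\mathbb Z}$, and then let the $z$-integral extract the constraint $\sum k_{i}=0$. The only cosmetic differences are your preliminary change of variable $u=z^{L}$ and your writing of the inner piece directly as $\mathrm{Res}_{w=0}$ rather than as the small-circle integral $\oint_{|w|=1/R}$; the paper's proof is terser (and glosses over the relabelling and normalization you spell out at the end) but is otherwise the same argument.
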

\begin{proof}
The strategy is to use the geometric series and the residue theorem to return back to \Schutz's formula for $\ZTASEP$. The function 
\begin{equation}
f(w) =(w-1)^{j-i+N}w^{-x_{i}+y_{j}+i-j+L-N-1 }e^{t(w-1)}
\end{equation}
 is analytic on $\mathbb{C}\setminus \set{0}$ and so by residue
\begin{equation}
\frac{1}{L}\sum_{w\in R_{z}}\frac{f(w)}{q'_{z}(w)}=\oint_{|w|=R}\frac{f(w)}{p(w)-z^{L}} \frac{dw}{2\pi i}-\oint_{|w|=\frac{1}{R}}\frac{f(w)}{p(w)-z^{L}} \frac{dw}{2\pi i},
\end{equation}
where $R$ is large enough so that the roots of $q_{z}(w)$ are contained in the annulus $A(\frac{1}{R},R)$. Next we use the geometric series for each of the contours. For the first contour we take R large enough so that $\abs{\frac{z^{L}}{p(w)}}<1$ and obtain
\begin{equation}
   \oint_{|w|=R}\frac{f(w)}{p(w)-z^{L}} \frac{dw}{2\pi i} =\sum_{k\geq 0}z^{L k}\oint_{|w|=R}\frac{f(w)}{p(w)}\frac{1}{p(w)^{ k}}\frac{dw}{2\pi i}
\end{equation}
and for the second contour we take $R$ large enough so that $\abs{\frac{p(w)}{z^{L}}}<1$ and obtain
\begin{equation}
  - \oint_{|w|=\frac{1}{R}}\frac{f(w)}{p(w)-z^{L}} \frac{dw}{2\pi i} =\sum_{k\leq -1 }z^{L k}\oint_{|w|=\frac{1}{R}}\frac{f(w)}{p(w)}\frac{1}{p(w)^{ k}}\frac{dw}{2\pi i}.
\end{equation}
For the second contour since $k+1\leq 0$, the only pole comes from $w=0$ and so we can enlarge the contour to match the first one
\begin{equation}\label{eq:Lt1}
  \sum_{k\leq -1 }z^{L k}\oint_{|w|=R}\frac{f(w)}{p(w)^{1+ k}}\frac{dw}{2\pi i}
\end{equation}
and then just add the two
\begin{equation}\label{eq:Lt2}
 \sum_{k\in \Z }z^{L k}\oint_{|w|=R}\frac{f(w)}{p(w)^{k+1}}\frac{dw}{2\pi i}.
\end{equation}
Finally by pulling out the sums from each row, we obtain
\begin{equation}
  \sum_{k_{1},...,k_{N}\in \Z }  z^{L\sum k_{i}-1}.  
\end{equation}
We obtain that for the z-contour to be non-zero we require $L(\sum k_{i}-\frac{a_{i}}{N})=0$. 

\end{proof}
\paragraph{Some literature on cylindrical versions of RSK}Based on the cylindric tableau introduced in \parencite{postnikov2005affine}, in the work \parencite{neyman2014cylindric}, they introduce a cylindrical version of RSK. In \parencite{elizaldewalks} they even prove a bijection of these tableaus with the periodic TASEP process.

%%%%%%%%%%%%%%%%%%%%%%%%%%%%%%%%%%%%%%%%%%%%%%%%%%%%%%%%%%%%%%%%%%%%%%%%%%%%%%%%%%%%%%%%%%%%%%%%%%%%%%%%%%%%%%%%%%%%%%%%%%%%%%%%%%%%%%%%%%%%%%%%%%%%%%%%%%%%%%%%%%%%%%%%%%%%%%%%%%%%%%%%%%%%%%%%%%%%%%%%%%%%%%%%%%%%%%%%%%%%%%%%%%%%%%%%%%%%%%%%%%%%%%%%%%%%%%%%%%%%%%%%%%%%%%%%%%%%%%%%%%%%%%%%%%%%%%%%%%%%%%%%%%%%%%%%%%%%%%%%%%%%%%%%%%%%%%%%%%%%%%%%%%%%%%%%%%%%%%%%%%%%%%%%%%%%%%%%%%%%%%%%%%%%%%%%%%%%%%%%%%

\paragraph{Acknowledgements}
We firstmost thank K.Matetski for suggesting the problem of deriving the formula \cref{eq:contourformSchutzcircle} using the framework of \parencite{DW08}. We thank D.Grinberg for his help with proving an inversion identity using posets. We also thank P.Alexandersson, C.Krattenthaler and K.Motegi for discussing this problem.

%%%%%%%%%%%%%%%%%%%%%%%%%%%%%%%%%%%%%%%%%%%%%%%%%%%%%%%%%%%%%%%%%%%%%%%%%%%%%%%%%%%%%%%%%%%%%%%%%%%%%%%%%%%%%%%%%%%%%%%%%%%%%%%%%%%%%%%%%%%%%%%%%%%%%%%%%%%%%%%%%%%%%%%%%%%%%%%%%%%%%%%%%%%%%%%%%%%%%%%%%%%%%%%%%%%%%%%%%%%%%%%%%%%%%%%%%%%%%%%%%%%%%%%%%%%%%%%%%%%%%%%%%%%%%%%%%%%%%%%%%%%%%%%%%%%%%%%%%%%%%%%%%%%%%%%%%%%%%%%%%%%%%%%%%%%%%%%%%%%%%%%%%%%%%%%%%%%%%%%%%%%%%%%%%%%%%%%%%%%%%%%%%%%%%%%%%%%%%%%%%%%%%%%%%%%%%%%%%%%%%%%%%%%%%%%%%%%%%%%%%%%%%%%%%%%%%%%%%%%%%%%%%%%%%%%%%%%%%%%%%%%%%%%%%%%%%%%%%%%%%%%%%%%%%%%%%%%%%%%%%%%%%%%%%%%%%%%%%%%%%%%%%%%%%%%%%%%%%%%%%%%%%%%%%%%%%%%%%%%%%%%%%%%%%%%%%%%%%%%%%%%%%%%%%%%%%%%%%%%%%%%%%%%%%%%%%%%%%%%%%%%%%%%%%%%%%%%%%%%%%%%%%%%%%%%%%%%%%%%%%%%%%%%%%%%%%%%%%%%%%%%%%%%%%%%%%%%%%%%%%%%%%%%%%%%%%%%%%%%%%%%%%%%%%%%%%%%%%%%%%%%%%%%%%%%%%%%%%%%%%%%%%%%%%%%%
\section{Cylindrical Dual-RSK}
\subsection{Dual RSK }
We quickly recall the Dual-RSK definition from \parencite{DW08}, in particular the method under \textit{case B} because that is the one that corresponds to $\ZTASEP$. We start from the innovation data
\begin{equation}\label{eq:innovationdata}
\bigl(\begin{smallmatrix}
a_{1}& a_{2}& \cdots \\
b_{1}& b_{2}& \cdots \\
\end{smallmatrix}\bigr),     
\end{equation}
where the $a_{i},b_{i}$ are nondecreasing (lexicograhic). In the context of $\ZTASEP$ and PTASEP, the arrays keep track of the exponential clocks hitting but none of the particle exclusion interactions: two consecutive particles bumping into each other and the first particle hitting against the last particle after winding around. We consider the space arrays of strictly positive integers
\begin{equation}
\T_{N}^{n, \wedge }:=\set{T=(T_{ij}),i\in [N], j\in [\lambda_{i}]: T_{ij}\in [n], T_{ij}< T_{i+1,j},T_{ij}\leq T_{i,j+1},\lambda_{i}\geq 1   },    
\end{equation}
where the vector $(\lambda_{1},...,\lambda_{N})$ is called the shape of $T$ and denoted as $sh(T)$. In the terminology of enumerative combinatorics, $\T_{N}^{n, \wedge }$ consists of semi-standard Young tableaux (SSYT) with at most $N$ rows and content $\set{1,...,n}$. Similarly, we write $\T_{N}^{n, < }$ if the integers in $T$ do not exceed $n$ and increase strictly along the rows and we have at most $N$ rows.
\begin{definition}(Dual RSK)
Given $\CP(n)$, the SSYT $\CP(n+1)$ is found by inserting the elements $b_i$ for which $a_i = n + 1$.  If there are $M$ such elements, the methods construct a sequence $\CP^1(n),...,\CP^M(n)$ such that $\CP^1(n) = \CP(n) \tand \CP(n + 1) = \CP^M(n)$.
\begin{enumerate}
    \item The entries $b_i$ are inserted down the columns of $\CP(n)$.

    \item If every entry in the column $C$ is strictly smaller than $b$, then $b$ is appended to the end of the column.
    
    \item Otherwise, $b$ is used to replace the uppermost entry $w$ in the column $C$ which is \textit{greater than or equal} to $b\leq w$. 
    
    \item And we restart the process for $w$ by inserting it in the next column $C+1$,and this process continues until an entry is either placed at the end of a column or it is placed in the first position of an empty column.
    
\end{enumerate}
Let $\CQ(n)$ the unique array of integers for which the entries $1,...,m $ form an array with the same shape as $\CP(m)$ for $m \leq n$. The pair $(\CP(n),\CQ(n))$ belongs to
\begin{equation}
\set{(P,Q)\in \T_{N}^{n, \wedge }\times \T_{N}^{n, <}:  sh(P) = sh(Q )   }.    
\end{equation}
\end{definition}
\begin{example}
Consider the innovation data
\begin{equation}
\bigl(\begin{smallmatrix}
1 &1  &1  &2  &2   \\ 
1& 3 & 4 & 2 & 3
\end{smallmatrix}\bigr).     
\end{equation}
Then we have
\begin{eqalign}
&\CP(1)=\ytableausetup{textmode} \begin{ytableau}1\\3\\4 \end{ytableau},\CQ(1)=\ytableausetup{textmode} \begin{ytableau}1\\1\\1 \end{ytableau}\\
&\CP(2)=\ytableausetup{textmode} \begin{ytableau}1&3\\2&4\\3 \end{ytableau},\CQ(2)=\ytableausetup{textmode} \begin{ytableau}1&2\\1&2\\1 \end{ytableau} 
\end{eqalign}
with final shape $(2,2,1)$. 
\end{example}
This combinatorial correspondence is in bijection to the innovation data \cref{eq:innovationdata} (eg. see \parencite[Theorem 7.11.5,Theorem 7.14.1]{stanley2011enumerative}).
%%%%%%%%%%%%%%%%%%%%%%%%%%%%%%%%%%%%%%%%%%%%%%%%%%%%%%%%%%%%%%%%%%%%%%%%%%%%%%%%%%%%%%%%%%%%%%%%%%%%%%%%%%%%%%%%%%%%%%%%%%%%%%%%%%%%%%%%%%%%%%%%%%%%%%%%%%%%%%%%%%%%%%%%%%5
\pparagraph{Insertion paths}
Because we will need to prove a bijection for the cylindrical case, we recall the notion of \textit{insertion paths/routes} $I(k\to \CP)$ \parencite[lemma 7.11.2]{stanley2011enumerative}. This contains the coordinates of all the boxes that got bumped after we inserted the entry $k$ into the tableau $\CP$. In the above example, we have
\begin{eqalign}
I(2\to \CP(1))=\set{(2,1),(1,2) }, 
\end{eqalign}
because it displaced the box with entry $3$ to the second column. 
\begin{definition}
We will use the following operations. When each coordinate in the insertion path $R$ is located strictly north of the corresponding column-coordinate of route $R'$, we say $R$ is \textit{strictly above} $R'$ or $R'$ is \textit{strictly below} $R$ and write
\begin{eqalign}
R'\prec    R.
\end{eqalign}
Similarly if we also some of the y-coordinates to match, we replace strictly by weakly and write
\begin{eqalign}
R'\preceq    R.
\end{eqalign}
\end{definition}
We have the following \textit{column bumping lemma} as stated in \parencite[exercise 3 "Column bumping lemma" section A.2 p.g. 186]{fulton1997young} and proved analogously as \parencite[lemma 7.11.2]{stanley2011enumerative} and \parencite[section 1.1 "Row bumping lemma"]{fulton1997young}.
%%%%%%%%%%%%%%%%%%%%%%%%%
\begin{lemma}\label{lem:insertionpathproperties}
Fix SYYT $\CP$. Consider two successive column-insertions, first column-inserting $x$ in $\CP$ and then column-inserting $x'$ in $\CP'=(x\to\CP)$, giving rise to insertions-paths $R,R'$ and two new boxes with locations $B,B'$ respectively.
\begin{enumerate}

\item If $x< x'$, then $R'\prec R$, and $B'$ is southwest of $B$.

\item If $x\geq x'$, then $R \preceq  R'$, and $B'$ is northeast of $B$.

%\item If $x>x'$, then $R\preceq R'$, and $B'$ is weakly above and weakly on the right of $B$.
    
\end{enumerate}

\end{lemma}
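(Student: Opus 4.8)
The plan is to carry out the column-insertion analogue of the argument for \parencite[lemma 7.11.2]{stanley2011enumerative} and the row bumping lemma in \parencite[section 1.1]{fulton1997young}, adapted to the ``uppermost entry $\geq$'' rule of the Dual RSK: run the two column-insertions side by side and induct on the column index. Track the first insertion as a chain of values $x^{(1)}=x, x^{(2)}, x^{(3)},\dots$, where $x^{(j)}$ is the value presented to column $j$; writing $c_1<c_2<\cdots$ for the entries of that column (of $\CP$), the value $x^{(j)}$ replaces the uppermost $c_i$ with $c_i\geq x^{(j)}$, records the box $(r_j,j)\in R$, and is handed on as $x^{(j+1)}:=c_{r_j}$; if no such $c_i$ exists, $x^{(j)}$ creates the new box $B$ (at the bottom of column $j$, or at the top of the first empty column) and the chain stops at $c_R:=j$. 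Let $x'^{(1)}=x',x'^{(2)},\dots$, with rows $r'_j$ and new box $B'$, be the analogous chain for the insertion of $x'$ into $\CP'=(x\to\CP)$. The structural facts used throughout: for $j<c_R$ the column $j$ of $\CP'$ is column $j$ of $\CP$ with the row-$r_j$ entry lowered from $x^{(j+1)}$ to $x^{(j)}$ (same length), while every other column of $\CP'$ coincides with that of $\CP$ until column $c_R$, where one entry $x^{(c_R)}$ has been appended; and column-insertion paths have weakly decreasing row-coordinates (since entries weakly increase along rows of a semistandard tableau), which I use to place the terminal boxes.

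For the case $x<x'$, I would prove by induction on $j$ that, as long as both chains are active at column $j$, one has $x^{(j)}<x'^{(j)}$. Granting this: in column $j$ of $\CP'$ the rows $1,\dots,r_j$ all carry values $<x'^{(j)}$ (rows above $r_j$ hold $c_1<\cdots<c_{r_j-1}<x^{(j)}<x'^{(j)}$, and row $r_j$ holds $x^{(j)}<x'^{(j)}$), so the second chain can only act in a row $r'_j\geq r_j+1$; the value it bumps there is an original entry $c_{r'_j}>c_{r_j}=x^{(j+1)}$, which is the inductive step, and also forces the second chain to append in column $j$ if the first chain does, so $c_{R'}\leq c_R$. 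Hence $R'$ lies strictly below $R$ in every shared column, i.e. $R'\prec R$. For the boxes: if the second chain reaches column $c_R$, every pre-existing entry there is $<x^{(c_R)}<x'^{(c_R)}$, so it also terminates there, exactly one row below $B$; if instead it terminates in a column $j<c_R$, then $B'$ lies in row $(\text{length of column }j)+1\geq r_j+1>r_{c_R}=\mathrm{row}(B)$ and in a column $j<c_R=\mathrm{col}(B)$. Either way $B'$ is weakly to the left of and strictly below $B$, i.e. southwest of $B$; the base case $j=1$ is the single-column analysis above.

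For the case $x\geq x'$, symmetrically the invariant $x^{(j)}\geq x'^{(j)}$ is maintained, the only new point being the equality sub-case where the second chain acts in the same row $r'_j=r_j$: it then bumps out exactly the value $x^{(j)}$ that the first chain had deposited, so $x'^{(j+1)}=x^{(j)}\leq c_{r_j}=x^{(j+1)}$ and the invariant survives (when $r'_j<r_j$ one gets $x'^{(j+1)}=c_{r'_j}<c_{r_j}=x^{(j+1)}$ instead). In all cases $r'_j\leq r_j$, giving $R\preceq R'$. Moreover the second chain bumps in column $j$ whenever the first does (the entry $x^{(j)}$ deposited there is $\geq x'^{(j)}$), and it still bumps in the terminal column $c_R$, since the appended entry $x^{(c_R)}$ is $\geq x'^{(c_R)}$; hence the second chain runs at least one column further, so $\mathrm{col}(B')>\mathrm{col}(B)$, and since its row-coordinates are weakly decreasing and bounded by $r'_{c_R}\leq r_{c_R}=\mathrm{row}(B)$ we get $\mathrm{row}(B')\leq\mathrm{row}(B)$, so $B'$ is northeast of $B$. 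The only genuine obstacle is the bookkeeping at the terminal column: one must correctly match the ways a chain can stop (append to a non-empty column, open a fresh column, or -- for the second chain -- bump at the spot where the first chain had stopped) and verify the relative position of $B$ and $B'$ in each combination. Once the value- and row-invariants are in place these are a handful of immediate checks, so the substance of the proof is exactly the inductive propagation of $x^{(j)}<x'^{(j)}$ (resp. $\geq$) described above.
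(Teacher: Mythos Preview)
Your argument is correct and is precisely the column-insertion analogue of the row bumping proofs in \parencite[lemma 7.11.2]{stanley2011enumerative} and \parencite[section 1.1]{fulton1997young} that the paper defers to; the paper gives no independent proof beyond those citations, so your inductive propagation of $x^{(j)}<x'^{(j)}$ (resp.\ $x^{(j)}\geq x'^{(j)}$) along the bumping chains, together with the weak monotonicity of the row-coordinates, is exactly the intended argument.
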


In the cylindrical case we will need to deal with multiple insertions and so we state here a lemma that still fits in the Dual-RSK case.
\begin{lemma}\label{lem:intertwinninglemmathreeinsertions}
Fix SYYT $\CP$. Consider three successive column-insertions of entries $a_{1},a_{2}$ and $b$ with
\begin{eqalign}
a_{1}<b<a_{2}    
\end{eqalign}
i.e. we first column-inserting $a_{1}$ in $\CP_{1}$, we then column-insert $a_{2}$ in $\CP_{2}=(a_{1}\to\CP_{1})$, and then finally $b$ in $\CP_{3}=(a_{2}\to\CP_{2})$. We denote their routes $R_{1},R_{2},R_{3}$ with their corresponding new boxes with locations $B_{1},B_{2},B_{3}$ respectively. Then
\begin{itemize}
    \item $R_{3}$ is strictly below $R_{1}$ and weakly above $R_{2}$ i.e. $R_{2}\prec R_{3}\prec R_{1}$.
    \item $B_{3}$ is southwest of $B_{1}$ and northeast of $B_{2}$. 
\end{itemize}

\end{lemma}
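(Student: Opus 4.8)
The plan is to derive the three-insertion statement by chaining together two applications of the column bumping lemma (\cref{lem:insertionpathproperties}), being careful about which inequality (strict versus weak) is in force at each stage. Write $\CP_1 \to \CP_2 \to \CP_3 \to \CP_4$ for the successive tableaux obtained by column-inserting $a_1$, then $a_2$, then $b$, with routes $R_1, R_2, R_3$ and new boxes $B_1, B_2, B_3$. The subtlety is that \cref{lem:insertionpathproperties} as stated compares two \emph{successive} insertions into tableaux that differ by exactly one prior insertion, whereas here I must compare $R_3$ (the insertion of $b$ into $\CP_3$) against $R_1$ (the insertion of $a_1$ into $\CP_1$), and these are separated by the intervening insertion of $a_2$. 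So the first step is to pass through the intermediate route $R_2$.

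The key steps, in order. \emph{Step 1:} Apply \cref{lem:insertionpathproperties} to the successive insertions of $a_1$ then $a_2$. Since $a_1 < a_2$, case (1) gives $R_2 \prec R_1$ and $B_2$ southwest of $B_1$. \emph{Step 2:} I would like to apply \cref{lem:insertionpathproperties} to the successive insertions of $a_2$ then $b$. Since $b < a_2$, i.e. $a_2 > b$, case (2) applies in the form "first insert $a_2$, then insert $b$ with $a_2 \geq b$", yielding $R_2 \preceq R_3$ and $B_3$ northeast of $B_2$. \emph{Step 3:} Combine: from Steps 1 and 2, $R_2 \preceq R_3$ and $R_2 \prec R_1$. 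To conclude $R_3 \prec R_1$ I need to upgrade a weak relation to a strict one, which is where the hypothesis $a_1 < b$ enters — this comparison is not handled by the two bumping-lemma applications above and must be argued directly: one tracks the first column, where inserting $b$ (with $b > a_1$) bumps an entry strictly lower than the one bumped when inserting $a_1$, and then propagates this strict separation rightward column by column using the standard monotonicity of column bumping (the same argument that proves \cref{lem:insertionpathproperties} case (1), applied "virtually" to the pair $a_1, b$ even though they are not inserted into the same tableau — one shows the presence of the extra entry $a_2$ in $\CP_3$ cannot destroy the strict dominance established in column one). The box statement $B_3$ southwest of $B_1$ then follows from $R_3 \prec R_1$ together with the fact that a route strictly above another ends in a box weakly further south and strictly further west, exactly as in \cref{lem:insertionpathproperties}(1).

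The main obstacle is Step 3: the routes $R_1$ and $R_3$ are insertions into \emph{different} tableaux ($\CP_1$ versus $\CP_3 = (a_2 \to (a_1 \to \CP_1))$), so \cref{lem:insertionpathproperties} does not literally apply to them, and I cannot simply transitively chain $R_2 \preceq R_3$ with $R_2 \prec R_1$ to get strictness. The clean way around this is to prove a small auxiliary claim: if $x < y$ and one column-inserts $x$ into $\CP$ (route $R_x$) and also column-inserts $y$ into any tableau $\CP''$ obtained from $\CP$ by a sequence of column insertions of entries all $> x$ (route $R_y$), then $R_y \prec R_x$. This is proved by the same inductive column-by-column dominance argument as the original bumping lemma, using that every entry previously inserted exceeds $x$ so the column-$1$ comparison still starts strict. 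Once this auxiliary claim is in hand (with $x = a_1$, $y = b$, $\CP'' = \CP_3$, the single intervening insertion being $a_2 > a_1$), both bullet points of the lemma follow immediately.
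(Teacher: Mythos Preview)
The paper states this lemma without proof, so there is no argument to compare against directly. Your Steps~1 and~2 are clean applications of \cref{lem:insertionpathproperties}, and you correctly isolate Step~3 --- the comparison $R_3 \prec R_1$ across a non-successive pair of insertions --- as the part requiring genuine work. The difficulty is that the auxiliary claim you formulate for Step~3 is false, and in fact the lemma as stated is false. Take $\CP_1$ to consist of the single column $(5,6)$ and set $a_1=1$, $a_2=4$, $b=2$, so that $a_1<b<a_2$. Inserting $a_1=1$ bumps $5$ and gives $R_1=\{(1,1),(1,2)\}$, $B_1=(1,2)$; inserting $a_2=4$ bumps $6$ and gives $R_2=\{(2,1),(2,2)\}$, $B_2=(2,2)$; finally inserting $b=2$ bumps $4$ from column~1, and this $4$ then bumps $5$ from \emph{row~1} of column~2, so $R_3=\{(2,1),(1,2),(1,3)\}$ with $B_3=(1,3)$. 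In column~2 both $R_1$ and $R_3$ sit at row~1, so $R_3\prec R_1$ fails; and $B_3=(1,3)$ lies strictly east of $B_1=(1,2)$, not southwest.

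The mechanism that breaks your column-by-column induction is precisely the intervening insertion: $a_2$ deposits a new entry in column~1 which $b$ may then bump, and that bumped entry (here $a_2=4$) can be \emph{smaller} than the entry $c_1$ originally bumped by $a_1$ (here $c_1=5$). Hence the inequality $c_1<c_3$ you need to propagate strict dominance into column~2 is simply unavailable. What does survive are exactly the relations produced by your Steps~1 and~2 alone --- $R_2\prec R_1$, $R_2\preceq R_3$, $B_2$ southwest of $B_1$, $B_3$ northeast of $B_2$ --- and these are the only parts of the statement that hold in general. The lemma is not referenced elsewhere in the paper, which is consistent with its conclusions about $R_3$ versus $R_1$ and $B_3$ versus $B_1$ being overclaims.
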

% \begin{proofs}
% From \cref{lem:insertionpathproperties}, we have that $b<a_{2}$ implies that the route $R_{3}$ is weakly above  and $B_{3}$ is weakly on the right of $B_{2}$. The inequality $a_{1}<a_{2}$ implies $R_{1}$ is strictly above $R_{2}$. So it remains to study the routes $R_{1}$ and $R_{3}$.\\
% Because we already inserted $a_{1},a_{2}$, the inequality $a_{1}<b<a_{2}$ implies that that $b$ will bump some box $y$ with $a_{1}<y\leq a_{2}$. Denote by $y_{1},y_{2}$ the boxes bumped by $a_{1},a_{2}$. As mentioned before these satisfy $y_{1}<y_{2}$.\\
% If $y=a_{2}$, then $y$ will in turn bump the box $y_{2}$ in the second column and so the route $R_{3}$ will follow the route of $R_{2}$ being strictly below $R_{1}$ as before.\\
% So now suppose $y<a_{2}$. This implies that $y$ was before the $R_{1},R_{2}$-insertions and so in the second column we obtain $y_{1}<y<y_{2}$ and we repeat again.
    
% \end{proofs}

\subsection{Cylindrical Infinite Young tableau (CIYT) }

\begin{definition}(Cylindrical Infinite Young tableau (CIYT))
We fix positive integers $N,L$. We consider a bi-infinite series of copied Young tableaus of height $N$ (i.e. each column has at most size $N$). The \textit{main period} tableau $\CP_{0}$ is filled with entries that are positive and the smallest. Note that $\CP_{0}$'s entries can move past the Nth row e.g. when there is an overflow of boxes. The shift between each tableau is $L-N$. The entries of tableaus above and below $\CP_{0}$ are shifted by $N$ and $-N$ respectively. We denote the \textit{winding} of each tableau by a subscript increasing for the tableaus $\CP_{1},\CP_{2},...$ above $\CP_{0}$ and decreasing for the the tableaus $\CP_{-1},\CP_{-2},...$ below it. 
\end{definition}
\begin{example}
In the tableau below we have $N=4$ and $L=6$. The middle tableau is the $\CP_{0}$.
\begin{eqalign}
&\ytableausetup{textmode} \begin{ytableau}
\none &\none & \none &\none&\none   &\none &\none & \none & \\ 
\none &\none & \none &\none&\none   &-3 &-3& -3\\ 
\none &\none &\none &\none&\none& -2&-2&-2 \\ 
\none &\none &\none &\none &\none &-1&1\\
\none &\none &\none &\none & \none &0&2 \\
 \none&\none &\none&1& 1  &1  \\
 \none&\none &\none &2 &2&2 \\
 \none&\none &\none&3 &5 \\
 \none&\none &\none&4 &6 \\
 \none&5& 5  &5  \\
 \none&6 &6&6 \\
 \none&7 &9 \\
 \none&8 &10 \\
  \\
\end{ytableau}
\end{eqalign}    

\end{example}
In this example we observe the presence of the entries $5,6$ in $\CP_{0}$. As we will explain below in the dual-RSK, these originated from \textit{overflowing} from the period $\CP_{1}$. 
\begin{definition}
Suppose for $\CP_{0}$ the partition is $\lambda=(\lambda_{1},...,\lambda_{N})$ , where $\lambda_{i}\leq \lambda_{i-1}$. When
\begin{equation}
\lambda_{1}=L-N+\lambda_{N},    
\end{equation}
then we call this an \textit{overflow}. More generally, if we have $\lambda_{1}=...=\lambda_{k}=L-N+\lambda_{N},   $ then we call this an \textit{overflow of the first k rows}.
\end{definition}
\begin{example}
In the tableau below we have $N=4,L=6$ and $\lambda=(3,3,1,1)$ and so we have overflow of the first and second row
\begin{eqalign}
\lambda_{1}=\lambda_{2}=3=6-4+1=L-N+\lambda_{4}.    
\end{eqalign}
\begin{eqalign}
&\ytableausetup{textmode} \begin{ytableau}
\none &\none & \none &\none&\none   &\none &\none & \none & \\ 
\none &\none & \none &\none&\none   &-3 &-3& -3\\ 
\none &\none &\none &\none&\none& -2&-2&-2 \\ 
\none &\none &\none &\none &\none &-1\\
\none &\none &\none &\none & \none &0 \\
 \none&\none &\none&1& 1  &1  \\
 \none&\none &\none &2 &2&2 \\
 \none&\none &\none&3  \\
 \none&\none &\none&4  \\
 \none&5& 5  &5  \\
 \none&6 &6&6 \\
 \none&7  \\
 \none&8  \\
  \\
\end{ytableau}
\end{eqalign}    

\end{example}

We also assign coordinates to each box.
\begin{definition}(Coordinates)
For a given entry $b$ we will use the notations
\begin{eqalign}
\spara{b}_{N}:=&(b)\text{mod}N\tand \cwpara{b}_{N}:=\branchmat{\frac{1}{N}\para{b-(b)\text{mod}N  }&\tcwhen (b)\text{mod}N\neq 0   \\ \frac{b}{N}-1  &  \tcwhen (b)\text{mod}N=0},
\end{eqalign}
in order to extract the winding multiple eg. if $b=1+kN$, then $\cwpara{b}_{N}=k$ and if $b=N+kN$, then $\cwpara{b}_{N}=N$. Each entry $b$ is the $N$-shift of an entry from the main period
\begin{eqalign}
\spara{b}_{0,N}:=\branchmat{b-\cwpara{b}_{N}N &\tcwhen \cwpara{b}_{N}\neq 0 \\ b & \tcwhen\cwpara{b}_{N}=0  }.    
\end{eqalign}
We assign the coordinates $(i,j,k)$ to a box $b$ when $\cwpara{b}_{N}=k$ and $\spara{b}_{0,N}$ is located on the ith-row and jth column of $\CP_{0}$.     
\end{definition}
We will also need to incorporate general initial data by using skew tableaus.
\begin{definition}(Cylindrical Skew Infinite Young tableau (CISYT))
We fix partitions $\mu,\lambda$. Then we let $\CP_{0}=\lambda/\mu$ and all the other period-tableaus be then shifts of it.
\end{definition}
\begin{example}
In the tableau below we have $N=4$, $L=6$, $\mu:=(2,2,1)$ and $\lambda=(3,2,2,1)$. 
\begin{eqalign}
&\ytableausetup{textmode} \begin{ytableau}
\none &\none & \none &\none&\none   &\none &\none & \none & \\ 
\none &\none & \none &\none&\none  &*(cyan) & *(cyan)  &-3  \\
\none &\none &\none &\none&\none &*(cyan) &*(cyan) \\ 
\none &\none &\none &\none &\none & *(cyan)&-1 \\
\none &\none &\none &\none & \none &0 \\
 \none&\none &\none& *(cyan)& *(cyan)  &1  \\
 \none&\none &\none & *(cyan)& *(cyan)\\
 \none&\none &\none&*(cyan) &3 \\
 \none&\none &\none&4 \\
 \none &*(cyan) &*(cyan)   &5 \\
 \none &*(cyan) &*(cyan)  \\
 \none&*(cyan) & 6\\
 \none&8  \\
  \\
\end{ytableau}
\end{eqalign}    
\end{example}

%%%%%%%%%%%%%%%%%%%%%%%%%%%%%%%%%%%%%%%%%%%%%%%%%%%%%%%%%%%%%%%%%%%%%%%%%%%%%%%%%%%%%%%%%%%%%%%%%%%%%%%%%%%%%%%%%%%%%%%%%%%%%%%%%%%%%%%%%%%%%%%%%%%%%%%%%%%%%%%%%%%%%%%%%%%%%%%%%%%%%%%%%%%%%%%%%%%%%%%%%%%%%%%%%%%%%%%%%%%%%%%%%%%%%%%%%%%%%%%%%%%%%%%%%%%%%%%%%%%%%%%%%%%%%%%%%%%%%%%%%%%%%%%%%%%%%%%%%%%%%%%%%%%%%%%%%%%%%%%%%%%%%%%%%%%%%%%%%%%%%%%%%%%%%%%%%%%%%%%%%%%%%%%%%%%%%%%%%%%%%%%%%%%%%%%%%%%%%%%%%%%%%%%%%%%%%%%%%%%%%%%%%%%%%%%%%%%%%%%%
\subsection{A Cylindrical dual-RSK}
In this section we go over a variation of dual-RSK in the context of CIYT and CISYT.

\begin{definition}\label{def:cylindricalRSKalgorithm}(Cylindrical dual-RSK for CIYT)
We fix $N,L\geq 1$ with $L-N\geq 1$. We start with a 2d-array 
\begin{equation}
\left(\begin{matrix}
a_{1} &a_{2}  &a_{3} &\cdots \\
b_{1}& b_{2} & b_{3} &\cdots 
\end{matrix}\right)    
\end{equation}
where we follow the \textit{lexicographic array construction}  $b_i \leq b_{i+1}$ if $a_i = a_{i+1}$ with $b_{i}\in [N]$ and $a_{i}\in \mathbb{N}$. First suppose we don't have an overflow. Then we follow \textit{column insertion} in $\CP_{0}$ as in the usual dual-RSK 
\begin{itemize}
    \item If every entry in the column is strictly smaller than b, then b is appended to the end of the column.
    \item Otherwise, b is used to replace the uppermost entry in the column which is greater than or equal to b.
\end{itemize}
We repeat this insertion for all the shifted tableaus but with shifted entries i.e. for $\CP_{m}$ we column-insert $b+mN$. Now suppose that we have an overflow of the first $k$ rows.
\begin{itemize}

    \item  If we column-insert an entry $b$ with $1\leq b\leq k$, then one of the boxes $b_{1}$ from $\CP_{0}$ will get displaced into the period-tableau $\CP_{-1}$. In turn, there is a possibility that this box $b_{1}$ pushes yet another box $b_{2}$ into $\CP_{-2}$. This process terminates because we have at most N-rows and so there are at most N boxes to displace. 
    
    \item Once this process is completed for $\CP_{0}$. We update all the period-tableaus to match the new entries of $\CP_{0}$. 
    
\end{itemize}
 As in dual-RSK, we also keep track of the timing of creation of every new box using a tableau $\CQ$. However, in addition we also need to keep track in a tableau $\CW$ the \textit{"winding"} of each insertion path i.e. the number of times the insertion path crossed between periods.
\end{definition}
%%%%%%%%%
Here we go over an example.
\begin{example}\label{ex:cylindricalrskex1}
We fix $N=4$ and $L=8$. We start with the following innovation data:
\begin{equation}
 \begin{pmatrix}
i_{1}& i_{2} &\cdots &i_{M} \\ 
j_{1}& j_{2} &\cdots &j_{M} 
\end{pmatrix}=\begin{pmatrix}
1& 1 &1 &2  &2  &3  &3  &3  &3  &4 & 4& 5  & 6 & 7 &7  & 8 & 9 & 9   \\ 
1& 3 &4 &2 &3 &1 & 2 & 3 &  4& 1& 4 & 1 & 1& 1& 2&1 & 1& 2
\end{pmatrix},   
\end{equation}
where $M=18$.
\begin{enumerate}

\item We start with an empty tableau $\CP(0)$ for the particles, an empty tableau $\CQ(0)$ for recording the creation time and $\CW(0)$ for recording the winding:
\begin{eqalign}
&\CP(0)=\ytableausetup{textmode} \begin{ytableau}
\none  &\none &\none &\none &\none &\none &\none &\none &\none & \none & \none& \none & \none& \none&~\\
\none  &\none &\none &\none &\none &\none &\none &\none &\none & ~ & ~& ~ & ~& ~\\
\none  &\none &\none &\none &\none &\none &\none &\none &\none & ~ & \none & \none \\
\none  &\none &\none &\none &\none &\none &\none &\none &\none & ~ \\
\none  &\none &\none &\none &\none &\none &\none &\none &\none & ~ \\
\none  &\none &\none &\none &\none&~ & ~ & ~& ~ & ~  \\ 
\none  &\none &\none &\none &\none  &~ & \none & \none \\
\none  &\none  &\none &\none &\none &~ &\none  &\none \\
\none  &\none  &\none &\none &\none &~ &\none  &\none \\
\none  &~&  ~ & ~& ~ & ~\\
\none  &~ & \none & \none \\ 
\none  &~ & \none & \none\\
\none  &~ & \none & \none\\
~\end{ytableau}\\
&,\CQ(0)=\ytableausetup{textmode} \begin{ytableau} ~& \none &\none &\none &\none \\ ~ & \none & \none \\ ~ & \none & \none \\~ & \none & \none\end{ytableau}  \tand \CW(0)=\ytableausetup{textmode} \begin{ytableau} ~& \none &\none &\none &\none \\ ~ & \none & \none \\ ~ & \none & \none \\~ & \none & \none\end{ytableau}.
\end{eqalign}

\item We input $\binom{1}{1}$,$\binom{1}{3}$,$\binom{1}{4}$ to get
\begin{eqalign}
&\CP(1)=\ytableausetup{textmode} \begin{ytableau}
\none  &\none &\none &\none &\none &\none &\none &\none &\none & \none & \none& \none & \none& \none&~\\
\none &\none &\none &\none &\none &\none &\none &\none &\none & -3 &  ~ & ~& ~ & ~ \\ 
\none &\none &\none &\none &\none &\none &\none &\none &\none & -1 & \none & \none \\ 
\none &\none &\none &\none &\none &\none &\none &\none &\none & 0 \\
\none &\none &\none &\none &\none &\none &\none &\none &\none & ~ \\
\none &\none &\none &\none &\none&1 & ~ & ~& ~ & ~ \\
\none &\none &\none &\none &\none  &3 & \none & \none \\
\none &\none  &\none &\none &\none &4 &\none  &\none \\
\none &\none  &\none &\none &\none &~ &\none  &\none \\
\none &5& ~ & ~& ~ & ~ \\
\none &7 & \none & \none \\
\none &8& \none & \none \\
\none &~& \none & \none \\
~
\end{ytableau}\\
& \CQ(1)=\ytableausetup{textmode} \begin{ytableau} 1& \none &\none &\none &\none \\ 1 & \none & \none \\ 1 & \none & \none\\~ & \none & \none \end{ytableau}. \tand  \CW(1)=\ytableausetup{textmode} \begin{ytableau} 0& \none &\none &\none &\none \\ 0 & \none & \none \\0& \none & \none\\~ & \none & \none \end{ytableau}.   
\end{eqalign}

\item We input $\binom{2}{2},\binom{2}{3}$ to get
\begin{eqalign}\label{eq:step2tableau}
&\CP(2)=\ytableausetup{textmode} \begin{ytableau}
\none  &\none &\none &\none &\none &\none &\none &\none &\none & \none & \none& \none & \none& \none&~\\
\none &\none &\none &\none &\none &\none &\none &\none &\none & -3 &  -1 & ~& ~ & ~ \\ 
\none &\none &\none &\none &\none &\none &\none &\none &\none & -2 & 0 & \none \\ 
\none &\none &\none &\none &\none &\none &\none &\none &\none & -1 \\
\none &\none &\none &\none &\none &\none &\none &\none &\none & ~ \\
\none &\none &\none &\none &\none&1 & 3 & ~& ~ & ~ \\
\none &\none &\none &\none &\none  &2 & 4 & \none \\
\none &\none  &\none &\none &\none &3 &\none  &\none \\
\none &\none  &\none &\none &\none &~ &\none  &\none \\
\none &5& 7 & ~& ~ & ~ \\
\none &6 & 8 & \none \\
\none &7& \none & \none \\
\none &~& \none & \none \\
~
\end{ytableau}\\
&\CQ(2)=\ytableausetup{textmode} \begin{ytableau} 1& 2 &\none &\none &\none \\ 1 & 2 & \none \\ 1& \none & \none\\~ & \none & \none \end{ytableau}, \tand \CW(2)=\ytableausetup{textmode} \begin{ytableau} 0& 0 &\none &\none &\none \\ 0 & 0 & \none \\ 0& \none & \none\\~ & \none & \none \end{ytableau}.    
\end{eqalign}

\item Next we enter the times 3 and 4 boxes $ \bigl(\begin{smallmatrix}
3  &3  &3  &3  &4 &4 \\ 
1 & 2 & 3 &  4& 1& 4 
\end{smallmatrix}\bigr)$ to obtain 
\begin{eqalign}
&\CP(4)=\ytableausetup{textmode} \begin{ytableau}
\none  &\none &\none &\none &\none &\none &\none &\none &\none & \none & \none& \none & \none& \none&~\\
\none &\none &\none &\none &\none &\none &\none &\none &\none & -3 &  -3 & -3& -1 & ~ \\ 
\none &\none &\none &\none &\none &\none &\none &\none &\none & -2 & -2 & 0 \\ 
\none &\none &\none &\none &\none &\none &\none &\none &\none & -1 & -1\\
\none &\none &\none &\none &\none &\none &\none &\none &\none & 0 & 0 \\
\none &\none &\none &\none &\none&1 & 1 & 1& 3 & ~ \\
\none &\none &\none& \none &\none  &2 & 2 &4 \\
\none  &\none &\none& \none &\none &3 &3  &\none \\
\none  &\none &\none&\none & \none &4 &4  &\none \\
\none &5& 5 & 5& 7 & ~ \\
\none &6 & 6 & 8 \\
\none &7& 7 & \none \\
\none &8& 8& \none \\
~
\end{ytableau} ,\\
& \CQ(4)=\ytableausetup{textmode} \begin{ytableau} 1& 2 &3 &4 &\none \\ 1 & 2 & 3 \\ 1& 3 & \none\\3 & 4 & \none \end{ytableau} \tand \CW(4)=\ytableausetup{textmode} \begin{ytableau} 0& 0 &0 &0 &\none \\ 0 & 0 & 0\\ 0& 0 & \none\\0 & 0& \none \end{ytableau}.
\end{eqalign}
Therefore, we see that by having the first two columns filled, we allowed the first row of the previous period to expand further. 

\item Next we want to check what happens when the first rows get overfilled by inserting the data  $\bigl(\begin{smallmatrix} 5  & 6 & 7& 7  \\ 
1 & 1& 1&2
\end{smallmatrix}\bigr)$
\begin{eqalign}\label{eq:steptableauoverflow}
&\CP(7)=\ytableausetup{textmode} \begin{ytableau}
\none  &\none &\none &\none &\none &\none &\none &\none &\none & \none & \none& \none & \none& \none&\none&~\\
\none &\none &\none &\none &\none &\none &\none &\none &\none & -3 &  -3 & -3& -3 & -3 &-3\\ 
\none &\none &\none &\none &\none &\none &\none &\none &\none & -2 & -2 &-2& 0 \\ 
\none &\none &\none &\none &\none &\none &\none &\none &\none & -1 & -1 & 3\\
\none &\none &\none &\none &\none &\none &\none &\none &\none & 0 & 0 \\
\none &\none &\none &\none &\none&1 & 1 & 1& 1 &1  &1  \\
\none &\none &\none &\none &\none  &2 & 2 &2&4 \\
\none &\none  &\none &\none &\none &3 &3  &7 \\
\none &\none  &\none &\none &\none &4 &4  &\none \\
\none &5& 5 & 5& 5 & 5& 5 \\
\none &6 & 6 &6& 8 \\
\none &7& 7 & 11\\
\none &8& 8& \none \\
~
\end{ytableau}\\
&,\CQ(7)= \ytableausetup{textmode} \begin{ytableau} 1& 2 &3 &4 &5& 6\\ 1 & 2 & 3& 7 \\ 1& 3 &7\\3 & 4 & \none \end{ytableau} \tand\CW(7)= \ytableausetup{textmode} \begin{ytableau}0& 0 &0 &0 &0& 0\\ 0 & 0 & 0& 0 \\ 0& 0 &1\\0 & 0 & \none \end{ytableau}.
\end{eqalign}
Therefore, boxes 3,7,9 were pushed into the next period by keep adding particle 1 boxes. We also note here that the last insertion $\binom{7}{2}$ is on the north of the penultimate insertion $\binom{7}{1}$ whereas in the previous above "internal"-cases, the last insertion was always the southernmost. We finally see here that we added the label "$1$" in the new box for $\CW$ because inserting $\binom{7}{1}$ led to an insertion path that crossed from $\CP_1$ to $\CP_0$.

\item We can also break the strictly increasing rule for $\CQ$'s rows. We insert  $\bigl(\begin{smallmatrix}
 8 & 9 & 9   \\ 
1 & 1& 2
\end{smallmatrix}\bigr)$ to get
\begin{eqalign}
&\CP(9)=\ytableausetup{textmode} \begin{ytableau}
\none  &\none &\none &\none &\none &\none &\none &\none &\none & \none & \none& \none & \none& \none&\none&~\\
\none &\none &\none &\none &\none &\none &\none &\none &\none & -3 &  -3 & -3& -3 & -3 &-3\\ 
\none &\none &\none &\none &\none &\none &\none &\none &\none & -2 & -2 & -2&-2&0&3 \\ 
\none &\none &\none &\none &\none &\none &\none &\none &\none & -1 & -1 & 1&1\\
\none &\none &\none &\none &\none &\none &\none &\none &\none & 0 & 0 \\
\none &\none &\none &\none &\none&1 & 1 & 1& 1 &1  &1  \\
\none &\none &\none &\none &\none  &2 & 2&2 &2&4&7 \\
\none &\none  &\none &\none &\none &3 &3  &5&5 \\
\none &\none  &\none &\none &\none &4 &4  &\none \\
\none &5& 5 & 5& 5 & 5& 5 \\
\none &6 & 6 &6&6& 8&11 \\
\none &7& 7 & 9&9\\
\none &8& 8& \none \\
~
\end{ytableau}\\
&\CQ(9)=\ytableausetup{textmode} \begin{ytableau} 1& 2 &3 &4 &5& 6\\ 1 & 2 & 3 &7&9&9\\ 1& 3 &7&8\\3 & 4 & \none \end{ytableau} \tand\CW(9)= \ytableausetup{textmode} \begin{ytableau}0& 0 &0 &0 &0& 0\\ 0 & 0 & 0& 0&1&0 \\ 0& 0 &1&1\\0 & 0 & \none \end{ytableau}.
\end{eqalign}

\end{enumerate}
Therefore, we get two $9$'s in the same row. This means that the rows of $\CQ$ are only weakly increasing, not \textit{strictly} as in the case of Dual RSK. And as before, we had to add two boxes in $\CW$ with label "$1$" from inserting $\bigl(\begin{smallmatrix}
 8 & 9  \\ 
1 & 1
\end{smallmatrix}\bigr)$ that crossed from $\CP_1$ to $\CP_0$, and we also added a box in $\CW$ with "$0$"  from inserting $\binom{9}{2}$.

\end{example}

Here is the modification for CISYT.
\begin{definition}\label{def:cylindricalRSKalgorithmCISYT}(Cylindrical dual-RSK for CISYT)
Here we fix partitions $\mu=(\mu_{1},...,\mu_{N}),\lambda=(\lambda_{1},...,\lambda_{N})$. To return to the previous setting, we simply fill in the boxes corresponding to $\mu$ in a standard way: we input entry $k$ on the kth row of the augmented tableau $\CP_{0}\cup \mu$ up to $\mu_{k}$ (if $\mu_{k}=0$, we don't add anything). Then we do the column-insertion as per usual but now starting from the augmented tableau $\CP_{0}\cup \mu$.
\end{definition}

%%%%%%%%%%%%%%%%%%%%%%%%%%%%%%%%%%%%%%%%%%%%%%%%%%%%%%%%%%%%%%%%%%%%%%%%%%%%%%%%%%%%%%%%%%%%%%%%%%%%%%%%%%%%%%%%%%%%%%%%%%%%%%%%%%%%%%%%%%%%%%%%%%%%%%%%%%%%%%%%%%%%%%%%%%%%%%%%%%%%%%%%%%%%%%%%%%%%%%%%%%%%%%%%%%%%%%%%%%%%%%%%%%%%%%%%%%%%%%%%%%%%%%%%%%%%%%%%%%%%%%%%%%%%%%%%%%%%%%%%%%%%%%%%%%%%%%%%%%%%%%%%%%%%%%%%%%%%%%%%%%%%%%%%%%%%%%%%%%%%%%%%%%%%%%%%%%%%%%%%%%%%%%%%%%%%%%%%%%%%%%%%%%%%%%%%%%%%%%%%%%%%%%%%%%%%%%%%%%%%%%%%%%%%%%%%%%%%%%%%%%%%%%%%%%%%%%%%%%%%%%%%%%%%%%%%%%%%%%%%%%%%%%%%%%%%%%%%%%%%%%%%%%%%%%%%%%%%%%%%%%%%%%%%%%%%%%%%%%%%%%%%%%%%%%%%%%%%%%%%%%%%%%%%%%%%%%%%%%%%%%%%%%%%%%%%%%%%%%%%%%%%%%%%%%%%%%%%%%%%%%%%%%%%%%%%%%%%%%%%%%%%%%%%%%%%%%%%%%%%%%%%%%%%%%%%%%%%%%%%%%%%%%%%%%%%%%%%%%%%%%%%%%%%%%%%%%%%%%%%%%%%%%%%%%%%%%%%%%%%%%%%%%%%%%%%%%%%%%%%%%%%%%%%%%%%%%%%%%%%%%%%%%%%%%%%%%%%%%%%%%%%%%%%%%%%%%%%%%%%%%%%%%%%%%%%%%%%%%%%%%%%%%%%%%%%%%%%%%%%%%%%%%%%%%%%%%%%%%%%%%%%%%%%%%%%%%%%%%%%%%%%%%%%%%%%%%%%%%%%%%%%%%%%%%%%%%%%%%%%%%%%%%%%%%%%%%%%%%%%%%%%%%%%%%%%%%%%%%%%%%%%%%%%%%%%%%%%%%%%%%%%%%%%%%%%%%%%%%%%%%%%%%%%%%%%%%%%%%%%%%%%%%%%%%%%%%%%%%%%%%%%%%%%%%%%%%%%%%%%%%%%%%%%%%%%%%%%%%%%%%%%%%%%%%%%%
\subsection{Periodic TASEP and CISYT}
\pparagraph{$\ZTASEP$ and  semi-standard Young
tableaux (SSYT)}
We quickly recall the case B of \textit{Bernoulli jumps with blocking} from \cite{DW08}. Particles are labelled from right to left, so $Y_N (n) \leq Y_{N-1}(n) \leq\cdots\leq Y_1(n)$. Between time $n-1$ and $n$ each particle attempts to move one step to the right, but it is constrained not to overtake the particle to its right. Particle $i$ moves with probability $p_i$. The particles are now updated from right to left, so it is the
updated position of the particle to the right that acts as a block.\\
The evolution is generated from a family $(\xi(k,n): k \in \set{1, 2,...,N},n \in N)$ of independent Bernoulli random variables satisfying $P(\xi(k,n) = +1) = 1 - P(\xi(k,n) = 0) = p_k$ , via the recursions $Y_1(n) = Y_1(n - 1) + \xi(1,n)$, and for $k = 2, 3,...,N$,
\begin{eqalign}
Y_k(n) = \minp{Y_k(n - 1) + \xi(k,n),Y_{k-1}(n)}
\end{eqalign}
In particular, $Y$ is a Markov chain on $W^{N}:=\set{z\in \zz^{N}: z_{N}\leq z_{N-1}\leq  \cdots \leq z_{1}}$. On shifting
the ith particle $i$ positions to the left, $Y$ corresponds to the discrete-time totally asymmetric simple exclusion process (TASEP) with sequential updating.\\
Finally, in the context of SSYT $(\CP(n),\CQ(n))_{n\geq 0}$, the process $\set{Y(n)}_{n\geq 0}$ was equal to the $\set{\text{ledge }(\CP(n))}_{n\geq 0}$, which is the the $N$-vector for which element $i$ is the number of $i$'s in row $i$ of $\CP(n)$. In words, whenever we come cross in the innovation matrix an entry of the form $\binom{n}{k}$, it means that the clock rung for the $k$th particle to move to the right and that we will column-insert the entry $k$ in the tableau. If the $(k-1)$th particle is blocking it, it means that in the tableau we are missing a $k-1$ entry and so when we insert $k$, it falls into a row $r$ that is strictly smaller than $k$.
\pparagraph{$\PTASEP$} 
Here we again generate the same family $(\xi(k,n): k \in \set{1, 2,...,N},n \in N)$ of independent Bernoulli random variables satisfying $P(\xi(k,n) = +1) = 1 - P(\xi(k,n) = 0) = p_k$. However, we change the recursion for the first particle to prevent it from overtaking the shifted particle $Y_{N}+L$
\begin{eqalign}
Y_1(n) =& \minp{Y_{1}(n - 1) + \xi(1,n),Y_{N}(n-1)+L}\\
Y_k(n) =& \minp{Y_k(n - 1) + \xi(k,n),Y_{k-1}(n)},k=2,...,N
\end{eqalign}
\pparagraph{$\PTASEP$ and CISYT}Here too we have that the $\set{\text{ledge }(\CP_{0}(n))}_{n\geq 0}$ is equal to $\PTASEP$ process $\set{Y(n)}_{n\geq 0}$. The interior exclusion dynamics are captured as for usual $\ZTASEP$. The boundary exclusion of the first particle $Y_{1}$ getting blocked by the shifted $Y_{N}+L$ is captured by the \textit{overflown} situation we discussed above. In particular, the first column of the tableau $\CP_{0}$ is shifted back by $L-N$ behind the first column of $\CP_{-1}$. That means that we leave a room of $L-N$-sites for the particle $Y_{1}$ to move unobstructed. However, once the number of $1$'s in the first row reaches the $L-N$ column, then the next $1$ will either fall into $\CP_{-1}$ or it will continue on the first row if the $(L-N+1)$th column is filled with entries $1,...,N$. This because for the $Y_{1}$ particle to move, we need the $Y_{N}$ particle to have moved at least once i.e. the first column is filled. Whereas if $Y_{N}$ didn't move, then the inserted entry $1$ will fall into $\CP_{-1}$ and thus not add to the ledge count. 

%%%%%%%%%%%%%%%%%%%%%%%%%%%%%%%%%%%%%%%%%%%%%%%%%%%%%%%%%%%%%%%%%%%%%%%%%%%%%%%%%%%%%%%%%%%%%%%%%%%%%%%%%%%%%%%%%%%%%%%%%%%%%%%%%%%%%%%%%%%%%%%%%%%%%%%%%%%%%%%%%%%%%%%%%%%%%%%%%%%%%%%%%%%%%%%%%%%%%%%%%%%%%%%%%%%%%%%%%%%%%%%%%%%%%%%%%%%%%%%%%%%%%%%%%%%%%%%%%%%%%%%%%%%%%%%%%%%%%%%%%%%%%%%%%%%%%%%%%%%%%%%%%%%%%%%%%%%%%%%%%%%%%%%%%%%%%%%%%%%%%%%%%%%%%%%%%%%%%%%%%%%%%%%%%%%%%%%%%%%%%%%%%%%%%%%%%%%%%%%%%%%%%%%%%%%%%%%%%%%%%%%%%%%%%%%%%%%%%%%%%%%%%%%%%%%%%%%%%%%%%%%%%%%%%%%%%%%%%%%%%%%%%%%%%%%%%%%%%%%%%%%%

\newpage 
\section{Bijection of cylindrical Dual RSK}
In this section we will prove a bijection between arrays
\begin{equation}\label{eq:innovationdata2}
 \begin{pmatrix}
i_{1}& i_{2} &\cdots &i_{M} \\ 
j_{1}& j_{2} &\cdots &j_{M} 
\end{pmatrix}.      
\end{equation}
and triples $(\CP(n),\CQ(n),\CW(n))_{n=1}^{m}$. We will use ideas from the bijection for usual RSK/Dual RSK \parencite[Theorem 7.11.5,Theorem 7.14.1]{stanley2011enumerative}. It is similar as in the Dual RSK case with additions that we will emphasize coming the infinite structure of the above SSYT. One major difference is that we need to keep track of the winding numbers of each insertion path. The reason we also need the winding tableau $\CW$ is because there is an ambiguity on whether a newly created box originated from an insertion path that fully stayed within $\CP_{0}$ or crossed between periods e.g. in \cref{eq:steptableauoverflow} we don't know if the entries at time "7" were interior, cylindrical or some combination.
\subsection{Some properties of the insertion paths in CISYT}
\noindent Here we collect some properties of the cylindrical insertion paths that we will denote as $I_{C}$. We will need a corresponding notion of an insertion path $I(b\rightarrow \CP)$, namely the set of all the $(x,y)$-coordinates where changes occurred. For example, in \cref{eq:step2tableau} inserting $\binom{2}{2}$ has the insertion path
\begin{equation}
I_{C}(2\rightarrow \CP(1))=\set{(2,1),(1,2) },    
\end{equation}
because it displaced the box with entry $3$ to the second column. Whereas in \cref{eq:steptableauoverflow} inserting $\binom{7}{1}$ caused a cylindrical displacement 
 has the insertion path
\begin{equation}
I_{C}(1\rightarrow \CP(6))=\set{(1,6),(3,3) },    
\end{equation}
because it displaced the box with entry $3$ which fell into the next period and so the corresponding entry $7$ fell into the third column at $(3,3)$. 
\begin{definition}(Cylindrical insertion path $I_{C}$)
\begin{itemize}
    \item  If the insertion path didn't move past the first row, then we call $I_{C}$ is an \textit{interior} insertion path.

    \item  But if moved past the first row, we call $I_{C}$ a \textit{cylindrical} insertion path $A$ with winding $\omega$ i.e. the number of times it crossed through periods. 

\item A cylindrical insertion path $A$ consists of possibly many \textit{iterations} $A^{k},k=1,...,\omega$ of insertions paths moving past the first row each time. The \textit{last iteration} $A^{\omega}$ can be viewed as an interior insertion of some box from the previous period $\CW=1$. The path in \cref{eq:steptableauoverflow} has $\omega=1$ since it crossed the first row only once. 

    \item By \textit{last box} or \textit{new box} of an insertion path we mean the last entry in the the insertion path which denotes the location of the new box that we added in the tableau eg. in $I_{C}(1\rightarrow \CP(6))$, the $(3,3)$ is the location of the new box.

    \item We denote by $\CS A^{k}$ the periodic copy of $A^{k}\in \CP^{0}$ in the tableau $\CP^{1}$ i.e. we shift every location by the vector $(-(L-N),N)$.
    
\end{itemize}

\end{definition}
Cylindrical iterations only happen when we have an overflow i.e. the first row has grown to cover the columns underneath. We call this part of the tableau the \textit{wall}.
\begin{definition}
The \textit{wall of $\CP^{n}$} are all the columns that are below the first row of the next period $\CP^{n-1}$. The \textit{edge of the wall of $\CP^{n}$} is the very last column. For example, at step $t=6$ in \cref{ex:cylindricalrskex1}, the first and second column consisting of $1,2,3,4$ are the wall for $\CP^{0}$ and the second column is the edge of the wall. But for example in $t=4$, the wall is empty because the first row didn't expand long enough to include the two columns 1,2,3,4. In the case of $n=0$, we will just call it the \textit{wall} and \textit{edge} of wall.
\end{definition}
%We list some properties of the wall.
% \begin{lemma}\label{lem:thewallproperties}
% \begin{enumerate}

% % \item Cylindrical iterations don't expand the wall i.e. no new columns are added.

% \end{enumerate}    
% \end{lemma}
% \begin{proofs}
% We will focus on $\CP^{0}$.
% % \pparagraph{proof of (1)} The wall expands when the last row of $\CP^{0}$ expands (so that the first row of $\CP^{1}$ can continue expanding). However, for this to happen we would need a creation of a new box and cylindrical iterations don't create any new boxes.

% \end{proofs}
The following lemmas describe the ordering of cylindrical path insertions for various windings.
\begin{lemma}\label{it:(1)orderingofcylindricalinsertionpaths}
 All the cylindrical insertions happened before the internal insertions.    
\end{lemma}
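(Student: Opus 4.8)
The plan is to reduce the statement --- which we read as: within the column insertions carried out at one time step, the cylindrical ones all precede the internal ones --- to a single application of the column bumping lemma after placing the construction on the bi-infinite cylinder.

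First, since we are in the dual-RSK regime the innovation array is $0/1$-valued, so two entries with $a_i=a_{i+1}$ satisfy $b_i<b_{i+1}$ strictly; hence the entries column-inserted at a fixed time $n$ appear in \emph{strictly} increasing order $b_{i_1}<b_{i_2}<\cdots$. It is therefore enough to prove the one-step statement: if the insertion of $b=b_{i_r}$ is internal, then the insertion of $b'=b_{i_{r+1}}>b$ into the resulting tableau is again internal. The lemma then follows by induction, the cylindrical insertions of the time step forming an initial segment.

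Next I would glue the period tableaux $\dots,\CP_{-1},\CP_{0},\CP_{1},\dots$ along the $(L-N)$-shift into one bi-infinite array, so that a single step of the cylindrical dual-RSK --- including the wrapping that carries a box bumped off the edge of the wall of $\CP_{0}$ into the neighbouring period --- is literally an ordinary column insertion into this array; ``cylindrical'' then means exactly that the resulting insertion path crosses a period boundary (moves past the first row of $\CP_{0}$) and ``internal'' that it does not. The column bumping lemma \cref{lem:insertionpathproperties} is local and transfers verbatim, and when the cylindrical insertion sits between other insertions of the time step one invokes the three-insertion refinement \cref{lem:intertwinninglemmathreeinsertions} to control the intervening bumped box. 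Applying \cref{lem:insertionpathproperties}(1) to the successive insertions $b\to\CP$ and $b'\to(b\to\CP)$ with $b<b'$, the route $R'$ of $b'$ lies strictly below the route $R$ of $b$ ($R'\prec R$) and the new box $B'$ is strictly southwest of $B$; since a column insertion visits columns left to right and stops at its new box, the columns met by $R'$ form an initial segment of those met by $R$, and on each of them $R'$ sits strictly below $R$. Being ``cylindrical'' is an upward feature of a route --- it requires the route to attain the first row of $\CP_{0}$ at the edge of the wall and be forced past it, which can occur only when $\CP_{0}$ currently overflows, $\lambda_{1}=L-N+\lambda_{N}$ --- so if $R$ never realises that configuration then the strictly lower $R'$ cannot either; moreover inserting $b$ never pushes $\lambda_{1}$ above the threshold $L-N+\lambda_{N}$ (a cylindrical insertion keeps $\lambda_{1}$ fixed and lengthens only lower rows, an internal one stays below the threshold), so the wall faced by $b'$ is no more prone to wrapping than the one faced by $b$. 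This yields ``$R$ internal $\Rightarrow R'$ internal''.

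The step I expect to be the main obstacle is making the route-characterization of ``cylindrical'' fully rigorous --- that a route wraps iff it touches the first row of $\CP_{0}$ at the current edge of its wall --- and checking that this is monotone under lowering the route, in particular for multi-winding ($\omega\ge 2$) insertions and while tracking how the shape of $\CP_{0}$, hence the location of the wall, shifts after each insertion in the batch. A secondary point is justifying that the column bumping lemma applies unchanged on the glued bi-infinite array, i.e. that wrapping does not violate its ``successive column insertions'' hypothesis.
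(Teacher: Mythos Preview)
Your proposal is correct and takes essentially the same approach as the paper: both arguments apply the column bumping lemma \cref{lem:insertionpathproperties} to conclude that for $w_1<w_2$ the route of the larger entry lies strictly below that of the smaller one, and hence cannot reach past the first row if the smaller one does not. The paper phrases this as a two-line contradiction (assume $w_1$ interior and $w_2$ cylindrical, then $R(w_1)$ lying strictly above the first iteration of $R(w_2)$ forces $R(w_1)$ past row 1), while you give the equivalent forward implication with more attention to the wall-tracking and bi-infinite gluing issues that the paper leaves implicit.
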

\begin{proof}
We start with entries $w_{1}<w_{2}$. Suppose by contradiction that inserting entry $w_{1}$ resulted in an interior-insertion whereas inserting entry $w_{2}$ relulted in a cylindrical one. In the first iteration the cylindrical insertion path needs to move past the first row. From \cref{lem:insertionpathproperties}, we have that the insertion path of $w_{1}$ is above  the first iteration-path of $w_{2}$. So the last two sentences contradict each other since the path of $w_{1}$ doesn't move past the first row, whereas the insertion-path of $w_{2}$ needs to move past it.
\end{proof}
We have the following more general statement.
\begin{lemma}\label{it:(3)orderingofcylindricalinsertionpaths}
Suppose that inserting entries $a_{1}<a_{2}$,with $a_{1},a_{2}\in[N]$, results to insertions $A,B$ with windings $\omega_{1},\omega_{2}\geq 0$. Then we have that the path iterations are ordered
\begin{eqalign}
    A^{k+1}\preceq B^{k}\prec A^{k}, k\in [1, \omega_{2}],
\end{eqalign}
the windings satisfy
\begin{eqalign}
\omega_{1}\geq \omega_{2},    
\end{eqalign}
and the new box $r_{1}$ created by $A$ is weakly above and right of the new box $r_{2}$ created by $B$. If $\omega_{1}=\omega_{2}$, we further get that $r_{1}$ is northeast of $r_{2}$.
\end{lemma}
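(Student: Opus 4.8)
The plan is to induct on the winding $\omega_2$ of the lower insertion $B$, using the three-insertion intertwining result \cref{lem:intertwinninglemmathreeinsertions} as the engine for each step. The base case $\omega_2 = 0$ (so $B$ is interior) is essentially \cref{it:(1)orderingofcylindricalinsertionpaths} together with \cref{lem:insertionpathproperties}: since $a_1 < a_2$, the first column-insertion path of $A$ lies weakly above $B$'s path, and if $A$ were also interior we would be in the Dual-RSK case and done by the northeast/southwest statement of \cref{lem:insertionpathproperties}; the point of \cref{it:(1)orderingofcylindricalinsertionpaths} is that $A$ cannot have winding strictly less than $B$, which here forces $\omega_1 \geq 0 = \omega_2$ trivially. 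So the base case reduces to recording that $B^0 \prec A^0$ and $r_1$ is northeast (or, in the degenerate overflow transition, weakly northeast) of $r_2$.

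For the inductive step I would look at one iteration at a time. Recall that an iteration $A^k$ of a cylindrical path, after it crosses the first row, is exactly an interior insertion into the previous period of the box it displaced across the ``wall''; and $\CS A^{k}$ denotes that iteration copied back into $\CP^1$ via the shift $(-(L-N), N)$. Suppose inductively that $A^{k} \prec B^{k-1}$ is false in the right direction --- more precisely, assume $B^{k-1} \prec A^{k-1}$ has been established and that the box displaced by $A$'s $(k-1)$-st crossing lies strictly northeast of the box displaced by $B$'s $(k-1)$-st crossing (when $B$ still has a $(k-1)$-st crossing). Then in the \emph{next} period these two displaced entries, together with whichever entry of the wall-edge column they collide with, form a triple of successive column-insertions to which I want to apply \cref{lem:insertionpathproperties} or \cref{lem:intertwinninglemmathreeinsertions}. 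The ordering $A^{k+1} \preceq B^{k} \prec A^{k}$ should fall out: $B^{k} \prec A^{k}$ because $A$'s displaced entry is inserted first and is the larger/more-northern one (giving the weak-above conclusion of case 2 of \cref{lem:insertionpathproperties}), and $A^{k+1} \preceq B^{k}$ because the entry that $A^k$ in turn pushes across must sit strictly below $B^k$'s entry --- this is where the ``$a_1 < b < a_2$'' configuration of \cref{lem:intertwinninglemmathreeinsertions} is used, with $B^k$'s insertion playing the role of the middle entry $b$. The winding inequality $\omega_1 \geq \omega_2$ is then immediate: as long as $B$ has an iteration $B^{k}$ that crosses the first row, the interleaving $A^{k+1} \preceq B^{k}$ forces $A$ to have an iteration $A^{k+1}$ that also crosses, so $A$ runs out of crossings no sooner than $B$ does. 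The final-box statement is the $k = \omega_2$ case of the iteration ordering combined with the new-box conclusions of \cref{lem:insertionpathproperties}: if $\omega_1 = \omega_2$ the last iterations $A^{\omega_1}, B^{\omega_2}$ are genuine interior insertions in the same period with $A$'s inserted entry strictly larger, so case (the $x \ge x'$ branch, read with the roles so that the later insertion is the lower one) gives $r_1$ strictly northeast of $r_2$; if $\omega_1 > \omega_2$ then $A$ has an extra crossing after $B$ has finished, which can only push its terminal box further up and to the right, giving the weak statement.

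The main obstacle I expect is the bookkeeping at the ``wall edge'': when an iteration $A^k$ crosses into the previous period, whether it displaces a box or merely appends to the end of the edge column depends on whether that column is full (contains $1, \dots, N$), and the displaced entry is then compared against the content of the \emph{next} column over, not the wall-edge column itself. So the clean triple ``$a_1 < b < a_2$'' of \cref{lem:intertwinninglemmathreeinsertions} has to be produced honestly: one must check that the two entries crossing from $A$ and $B$ really do straddle some intermediate entry of the target column, and that the shift by $(-(L-N), N)$ identifying $\CS A^k \subset \CP^1$ with $A^k \subset \CP^0$ respects the $\prec$/$\preceq$ relations (which it does, since the shift is by a fixed vector and $\prec$ only compares within a common period after shifting). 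A secondary subtlety is the degenerate case, visible in \cref{eq:steptableauoverflow}, where the overflow has \emph{just} been triggered and the ``last iteration viewed as an interior insertion of a box from the previous period'' is the reason the last insertion can be the northernmost rather than southernmost --- this is exactly the source of the ``weakly'' in $A^{k+1} \preceq B^{k}$ and must be carried through the induction rather than strengthened to $\prec$.
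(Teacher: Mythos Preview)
Your inductive skeleton is the right shape, but the paper's execution is more direct and never invokes \cref{lem:intertwinninglemmathreeinsertions}. The observation you are missing is that crossing a period shifts the displaced entry by $+N$: when $A^{1}$ reaches the wall edge and bumps some $g_{1}(a_{1})\in[1,N]$, the iteration $A^{2}$ begins by inserting $g_{1}(a_{1})+N\geq N+1>a_{2}$ into the first column of $\CP_{0}$. Since all of $A$ precedes all of $B$, and the entry launching $A^{2}$ strictly exceeds the entry $a_{2}$ launching $B^{1}$, case~(2) of \cref{lem:insertionpathproperties} (the $x\geq x'$ branch) yields $A^{2}\preceq B^{1}$ immediately; combined with $B^{1}\prec A^{1}$ from case~(1), this is the $k=1$ step, and the bootstrap to general $k$ just repeats the same pair of two-insertion comparisons with the shifted entries $g_{1}+N<g_{2}+N$. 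Your worry about manufacturing a genuine triple $a_{1}<b<a_{2}$ at the wall edge is therefore moot --- the $+N$ shift does all the work.

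Your argument for $\omega_{1}\geq\omega_{2}$ is circular as written: you say the interleaving $A^{k+1}\preceq B^{k}$ ``forces $A$ to have an iteration $A^{k+1}$'', but that interleaving is only available once $A^{k+1}$ is already known to exist. The paper instead argues by contradiction on the overflow structure: if $\omega_{2}\geq\omega_{1}+1$, then $B^{\omega_{1}}$ still has a crossing ahead and so reaches the wall edge, while $A^{\omega_{1}}$ terminates in a new box $r_{1}$. The relation $B^{\omega_{1}}\prec A^{\omega_{1}}$ places $r_{1}$ in a strictly smaller row than the row at which $B^{\omega_{1}}$ overflows, and since overflow of a given row forces overflow of every row above it, $A^{\omega_{1}}$ would have had to overflow as well --- contradiction.
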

\begin{proof}
Since $a_{1}<a_{2}$, we mean that the entire insertion path $A$ already happened before the insertion path $B$ occurred. We assume that $\omega_{1}\geq 1$, otherwise the proposition follows from \cref{lem:insertionpathproperties}. We start with comparing the first iterations for $k=1$ and the rest follow by bootstrapping i.e. we will show that
\begin{eqalign}
    A^{2}\preceq B^{1}\prec A^{1}.
\end{eqalign}
Since we have $a_{1}<a_{2}$, the \cref{lem:insertionpathproperties} implies that $ B^{1}\prec A^{1}$. 
\begin{enumerate}
    \item Since $\omega_{1}\geq 1$, it means we have an \textit{overflow} and so there is an edge-wall that $A^{1}$ will reach to bump some entry $g_{1}(a_{1})$. This means that the shifted entry $g_{1}(a_{1})+N$ will get inserted in the first column of $\CP_{0}$. This will create the iteration $A^{2}$. 

    \item If $\omega_{2}=0$, then there is no second iteration $B^{2}$ and so we are done. So suppose that $\omega_{2}\geq 1$. Then similarly, we will insert the entry $g_{2}(a_{2})+N$ in the first column of $\CP_{0}$.
    
    \item By the ordering $ B^{1}\prec A^{1}$, we get
    \begin{eqalign}
       g_{1}(a_{1})<g_{2}(a_{2})\Rightarrow g_{1}(a_{1})+N<g_{2}(a_{2})+N.
    \end{eqalign}
    
    \item Also,    we see that $g_{1}(a_{1})+N\geq 1+N>a_{2}$. So here we are in the part (3) of \cref{lem:insertionpathproperties}, where $x:=g_{1}(a_{1})>a_{2}=:x'$ and the insertion path $A^{2}$ happened before $B^{1}$. This gives
    \begin{eqalign}
       A^{2}\preceq B^{1}.  
    \end{eqalign}
    
\end{enumerate}
Next we show $\omega_{1}\geq \omega_{2}$. Suppose that $\omega_{2}\geq 1+\omega_{1}$. The iteration $A^{\omega_{1}}$ is the final one and so it created a new box $r_{1}$. The $B^{\omega_{1}}$ is the penultimate iteration and so that means it reached the edge-wall to bump some entry $r_{2}$. However, we have that $B^{\omega_{1}}\prec A^{\omega_{1}}$ which means that $r_{1}$ is created in a smaller row compared to the row of $r_{2}$. This is a contradiction because to have an overflow at row $r_{2}$, we need to have an overflow at $r_{1}$ too. \\
Therefore, we also get that the box with $r_{1}$ is weakly above the box with $r_{2}$. However, if we further know that $\omega_{1}=\omega_{2}$, then we can use that $B^{\omega_{1}}\prec A^{\omega_{1}}$ to get that the box $r_{1}$ is strictly above/right of the box $r_{2}$.    
\end{proof}
This lemma has a converse statement.
\begin{lemma}\label{it:(4)orderingofcylindricalinsertionpaths}
Suppose that inserting distinct entries $a_{1}\neq a_{2}\in[N]$, results to insertions $A,B$ with windings $\omega_{1},\omega_{2}\geq 0$ satisfying $\omega_{1}\geq \omega_{2}$. Also suppose that insertion $A$ creates a new box $r_{1}$ that is northeast of the new box $r_{2}$ created by $B$. Then we have that the original entries satisfy
\begin{eqalign}
  a_{1}<a_{2},  
\end{eqalign}
and so the path iterations are ordered
\begin{eqalign}
    A^{k+1}\preceq B^{k}\prec A^{k}, k\in [1, \omega_{2}].
\end{eqalign}

\end{lemma}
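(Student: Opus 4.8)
The plan is to prove the contrapositive of the forward direction by appealing to \cref{it:(3)orderingofcylindricalinsertionpaths}, and then read off the ordering of iterations as an immediate consequence. The two hypotheses distinguishing the two entries are: first, $\omega_1 \ge \omega_2$; second, the new box $r_1$ of $A$ is strictly northeast of the new box $r_2$ of $B$. Since $a_1 \ne a_2$, exactly one of $a_1 < a_2$ or $a_1 > a_2$ holds, so it suffices to rule out $a_1 > a_2$.

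First I would suppose for contradiction that $a_2 < a_1$. Then I would like to apply \cref{it:(3)orderingofcylindricalinsertionpaths} with the roles of the two entries swapped, i.e. with the smaller entry $a_2$ (insertion $B$, winding $\omega_2$) playing the role of "$a_1$" and the larger entry $a_1$ (insertion $A$, winding $\omega_1$) playing the role of "$a_2$". There is a subtlety here: \cref{it:(3)orderingofcylindricalinsertionpaths} is stated for \emph{successive} insertions where the smaller entry is inserted first, whereas in our array the insertion producing $A$ may have occurred before the one producing $B$. I would handle this exactly as in the proof of \cref{it:(3)orderingofcylindricalinsertionpaths} itself: the lexicographic array construction guarantees that when two entries $a < a'$ appear with $a'$ inserted at a later time, the whole insertion path of $a$ has already been completed, and the relevant parts of \cref{lem:insertionpathproperties} (parts (1) and (3)) apply in the form "path $P$ happened before path $P'$, entries ordered appropriately." So the swapped application of \cref{it:(3)orderingofcylindricalinsertionpaths} is legitimate in either temporal order; the only input used is $a_2 < a_1$.

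The swapped lemma then yields $\omega_2 \ge \omega_1$, and moreover that the new box $r_2$ of $B$ is weakly above and to the right of the new box $r_1$ of $A$. Combined with our standing hypothesis $\omega_1 \ge \omega_2$, we get $\omega_1 = \omega_2$, and then the final clause of \cref{it:(3)orderingofcylindricalinsertionpaths} (the $\omega_1 = \omega_2$ case) sharpens this to: $r_2$ is strictly northeast of $r_1$. But that directly contradicts the hypothesis that $r_1$ is strictly northeast of $r_2$ — two distinct boxes cannot each be strictly northeast of the other. Hence $a_2 < a_1$ is impossible, and since the entries are distinct we conclude $a_1 < a_2$. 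Once $a_1 < a_2$ is established, the ordering $A^{k+1} \preceq B^{k} \prec A^{k}$ for $k \in [1,\omega_2]$ is just the conclusion of \cref{it:(3)orderingofcylindricalinsertionpaths} applied in its original (unswapped) form.

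I expect the main obstacle to be purely bookkeeping: verifying carefully that \cref{it:(3)orderingofcylindricalinsertionpaths} really does apply with the entries swapped regardless of which insertion physically came first, i.e. that the proof there only used the value inequality $a_1 < a_2$ together with "earlier path completed before later path begins," and not any structural asymmetry between $A$ and $B$. If one wanted to be maximally careful one could instead re-run the short argument of \cref{it:(3)orderingofcylindricalinsertionpaths} verbatim with $a_2$ in the role of the small entry; no new ideas are needed, only a consistent relabeling.
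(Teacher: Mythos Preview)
Your proof is correct and follows essentially the same approach as the paper's: both argue by contradiction (assuming $a_2<a_1$) and invoke \cref{it:(3)orderingofcylindricalinsertionpaths} with the roles swapped to force first $\omega_1=\omega_2$ and then the impossible box ordering. The only cosmetic difference is that the paper treats the cases $\omega_1>\omega_2$ and $\omega_1=\omega_2$ separately rather than in one sweep, and you additionally flag the temporal-ordering subtlety (which insertion physically came first) that the paper leaves implicit.
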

\begin{proof}
If $\omega_{1}>\omega_{2}$, then by the contrapositive of \cref{it:(3)orderingofcylindricalinsertionpaths}, we get $a_{1}\leq a_{2}$ and since they are distinct we get $a_{1}<a_{2}$. Now suppose that $\omega_{1}=\omega=\omega_{2}$. The last segments of $A^{\omega}$ and $B^{\omega}$ are internal insertions since they have matching winding. If $a_{1}>a_{2}$, then by \cref{it:(3)orderingofcylindricalinsertionpaths} the last entry $r_{1}$ is  below/left that of $r_{2}$, which contradicts our assumption.    
\end{proof}

\subsection{Bijection}
Here we prove the bijection.

\begin{theorem}\label{thm:bijectioncylindricaldualRSK}
We have a bijection between $(\CP(n),\CQ(n),\CW(n))$ and the innovation data array \cref{eq:innovationdata2}.
\end{theorem}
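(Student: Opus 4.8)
The plan is to mimic the classical argument for the Dual-RSK bijection (as in \parencite[Theorem 7.11.5, Theorem 7.14.1]{stanley2011enumerative}) by exhibiting an explicit inverse, and then to account for the extra cylindrical data by reading it off from the winding tableau $\CW$. I would set up the proof as two maps: the \emph{forward} map $\Phi$ sending an array \cref{eq:innovationdata2} to the triple $(\CP(n),\CQ(n),\CW(n))_{n=1}^{m}$ via \cref{def:cylindricalRSKalgorithm} (or \cref{def:cylindricalRSKalgorithmCISYT} in the skew case), and the \emph{backward} map $\Psi$ which I define by reverse column-insertion. The theorem then reduces to showing $\Psi\circ\Phi=\mathrm{id}$ and $\Phi\circ\Psi=\mathrm{id}$ on the appropriate domains.

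First I would describe the target of $\Phi$ precisely: a triple of tableaux $(\CP,\CQ,\CW)$ of the same (cylindrical) shape, where $\CP$ is a CIYT/CISYT, $\CQ$ has weakly increasing rows and strictly increasing columns recording creation times (the weakening from the classical strictly-increasing-rows condition is exactly the phenomenon observed at the end of \cref{ex:cylindricalrskex1}, and must be built into the definition of the target set), and $\CW$ records on each box the winding $\omega$ of the insertion path that created it. The key structural input is that $\CW$ resolves the ambiguity flagged in the text after \cref{eq:innovationdata2}: given a batch of boxes created at a common time $n$, the entries of $\CW$ on those boxes together with \cref{it:(3)orderingofcylindricalinsertionpaths} and \cref{it:(4)orderingofcylindricalinsertionpaths} determine the order in which they were created and whether each insertion was interior or cylindrical.

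The core of the argument is the construction and verification of $\Psi$. Working backwards in time from $\CP(n)$: among the boxes of $\CP(n)$ labelled $n$ in $\CQ(n)$, I use \cref{it:(3)orderingofcylindricalinsertionpaths} to identify the \emph{last} box created --- by the second and third bullets of that lemma, within a fixed winding value the last box created is the one that is southwest-most among its peers, and \cref{it:(4)orderingofcylindricalinsertionpaths} gives the converse so this identification is unambiguous. Reading the $\CW$-label $\omega$ of that box, I then run reverse column-insertion: if $\omega=0$ this is the ordinary reverse column bumping of \parencite[lemma 7.11.2]{stanley2011enumerative}, recovering the entry $b_i\in[N]$ that was inserted; if $\omega\geq1$ I peel off the iterations $A^{\omega},A^{\omega-1},\dots,A^{1}$ one at a time, each iteration being (by the last bullet of the definition of cylindrical insertion path) an interior reverse-insertion that sends a box from $\CP_0$ back across the wall into $\CP_{-1}$, its periodic image $\CS$-shifted appearing in $\CP_0$; after $\omega$ such steps the path returns to the first row and a final interior reverse-insertion yields $b_i$. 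Peeling the box also shrinks $\CQ$ and $\CW$ consistently, and one iterates over all boxes labelled $n$ (in the order dictated by \cref{it:(3)orderingofcylindricalinsertionpaths}) and then decrements $n$. That $\Psi$ is well-defined --- i.e. that each reverse step can actually be performed and lands in a valid CIYT/CISYT --- follows from the overflow structure: a cylindrical box at row $r$ forces an overflow of the first $r$ rows (the argument used inside the proof of \cref{it:(3)orderingofcylindricalinsertionpaths}), so the edge of the wall is where the reverse iteration is guaranteed to pick up the next box.

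Finally, $\Psi\circ\Phi=\mathrm{id}$ is immediate from the fact that reverse column-insertion undoes column-insertion box-by-box and the ordering lemmas guarantee we remove boxes in exactly the reverse of the order $\Phi$ created them; $\Phi\circ\Psi=\mathrm{id}$ follows because the array produced by $\Psi$ has nondecreasing columns (each decrement of $n$ produces entries with strictly smaller first coordinate) with the within-batch $b$-order forced to be lexicographic by \cref{it:(3)orderingofcylindricalinsertionpaths} (smaller $b$ creates the more-northeast box), so feeding it back into $\Phi$ reproduces $(\CP,\CQ,\CW)$. I expect the main obstacle to be the bookkeeping in the cylindrical reverse step: one must check that peeling iteration $A^{k}$ really does expose, at the edge of the wall, the unique box whose removal continues the reverse insertion path correctly, and that no choice is ever forced outside the range $[N]$ for the recovered entry $b_i$ --- both of these hinge delicately on the overflow-propagation argument and on \cref{lem:intertwinninglemmathreeinsertions} to control the three-way interaction between the two boundary bumps and the in-flight entry during a single iteration.
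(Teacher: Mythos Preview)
Your proposal is correct and follows essentially the same route as the paper: both arguments establish invertibility by using the winding tableau $\CW$ together with \cref{it:(3)orderingofcylindricalinsertionpaths} and \cref{it:(4)orderingofcylindricalinsertionpaths} to single out, among the boxes created at time $n$, first the class of minimal winding (these are the insertions made last) and then within that class the extremal box, after which the ordinary reverse column bumping of \parencite{stanley2011enumerative} recovers $\binom{i_m}{j_m}$; surjectivity in both is the check that the recovered entries satisfy the lexicographic constraint $j_k<j_{k+1}$ when $i_k=i_{k+1}$, again via \cref{it:(3)orderingofcylindricalinsertionpaths}. Your write-up is somewhat more explicit than the paper's in spelling out the iteration-by-iteration reversal of a cylindrical path and in packaging the two directions as $\Psi\circ\Phi=\mathrm{id}$ and $\Phi\circ\Psi=\mathrm{id}$, but the substance is identical; one small point to reconcile is that the paper locates the last box as ``rightmost, then southernmost'' whereas your reading of \cref{it:(3)orderingofcylindricalinsertionpaths} gives ``southwest-most'' --- your version is the one consistent with the lemma as stated.
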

\begin{proof}
\pparagraph{Injectivity} Here we will show that the algorithm is invertible onto its image and hence injective, meaning that we start with triple $(\CP(n),\CQ(n),\CW(n))$ and obtain the unique pair $\binom{i_{m}}{j_{m}}$ that led to it.  
\begin{enumerate}
    \item First we single out all the boxes in $\CQ(n)$ that contain $n=i_{m}$.

    \item We look at the corresponding boxes in $\CW(n)$. Their values are some sequence $\{\omega_{1}..,\omega_{\ell}\}$ with $0\leq \omega_{1}<\cdots<\omega_{\ell}$. 
    
    \item By \cref{it:(3)orderingofcylindricalinsertionpaths}, we focus on the boxes with $\omega_{1}$ because these correspond to the group of insertions $A_{k_1},...,A_{k_r}$ that were made last. 

    \item From particularly \cref{it:(4)orderingofcylindricalinsertionpaths}, we see that the insertion paths $A_{k_1},...,A_{k_r}$ follow the usual Dual-RSK. Therefore, we single out the rightmost boxes and among them we pick the southernmost box. This is the last box that got inserted. From here we proceed as in the proofs of \parencite[Theorem 7.11.5,Theorem 7.14.1]{stanley2011enumerative} to extract $\binom{i_{m}}{j_{m}}$.
\end{enumerate}

\pparagraph{Surjectivity}
Here we repeat the above inversion but for two consecutive boxes with $n=i_{k}=i_{k+1}$ and show that $j_{k}<j_{k+1}$. 
\begin{enumerate}
    \item We single out all the boxes in $\CQ(n)$ that contain $n$. These have corresponding windings $\{\omega_{1}..,\omega_{\ell}\}$ with $0\leq \omega_{1}<\cdots<\omega_{\ell}$.

    \item First suppose that we have at least two entries in $\CW(n)$ with the smallest $\omega_{1}$. Then we go to the corresponding entries in $\CP(n)$ and find the two entries $r_{1},r_{2}$ that are the most northerneast. By \cref{it:(3)orderingofcylindricalinsertionpaths}, since they come from two insertion paths $A,B$ with the same $\omega_{1}$, we know that ,say, $r_{1}$ will be northeast of $r_{2}$. And so we can invert this path to get $j_{k}< j_{k+1}$. 

    \item Now suppose that $\omega_{1}$ has a single box $r_{1}$ with a corresponding insertion path $A$.  From $\omega_{2}$, we pick the most northeast box $r_{2}$ with a corresponding insertion path $B$. Here are in the setting of \cref{it:(4)orderingofcylindricalinsertionpaths} and so we again get $a_{1}<a_{2}$.
    
    %\item Now suppose that we have only one insertion path $A$ with winding $\omega_{1}$. Let $B$ be the insertion path of the last box inserted with winding $\omega_{2}>\omega_{1}$. So we have that insertion $A$ happened after insertion $B$. 

    % \item In particular, since $\omega_{1}<\omega_{2}$, we have from \cref{it:(4)orderingofcylindricalinsertionpaths} that the path $A$ will be weakly above/left of path $B$ as they go through $\omega_1-1$ and $\omega_{2}-1$ period-crossings.

    % \item By way of contradiction, we suppose that $j_{k}> j_{k+1}$. So path $A$ starts from row $j_{k+1}$ in $\CP_{0}$ and ends at period $\CP_{-(\omega_1-1)}$. Similarly, path $A$ starts from row $j_{k}$ in $\CP_{0}$ and ends at period $\CP_{-(\omega_2-1)}$.

    % \item However, the period $\CP_{-(\omega_2-1)}$ is strictly above $\CP_{-(\omega_1-1)}$ and therefore the path $B$ will be strictly above path $A$ which is a contradiction to \cref{it:(4)orderingofcylindricalinsertionpaths}.
\end{enumerate}

\end{proof}
\begin{remark}
There is a possibility that the tableau $\CW(n)$ can be extracted from $(\CP(n),\CQ(n))$. We can try to extract windings by doing a contradiction argument based on insertion paths. We start from the southern and most rightmost box, if its insertion path goes over to previous period, then we stop and go to the next box. We keep going till we identify any zero-winding boxes. Then we repeat for all the omega=1 boxes and so on and so forth. At the end we fill the W-tableau of time n with all the various windings. Once we have that information, we proceed with the proof above. 
\end{remark}

%%%%%%%%%%%%%%%%%%%%%%%%%%%%%%%%%%%%%%%%%%%%%%%%%%%%%%%%%%%%%%%%%%%%%%%%%%%%%%%%%%%%%%%%%%%%%%%%%%%%%%%%%%%%%%%%%%%%%%%%%%%%%%%%%%%%%%%%%%%%%%%%%%%%%%%%%%%%%%%%%%%%%%%%%%%%%%%%%%%%%%%%%%%%%%%%%%%%%%%%%%%%%%%%%%%%%%%%%%%%%%%%%%%%%%%%%%%%%%%%%%%%%%%%%%%%%%%%%%%%%%%%%%%%%%%%%%%%%%%%%%%%%%%%%%%%%%%%%%%%%%%%%%%%%%%%%%%%%%%%%%%%%%%%%%%%%%%%%%%%%%%%%%%%%%%%%%%%%%%%%%%%%%%%%%%%%%%%%%%%%%%%%%%%%%%%%%%%%%%%%%%%%%%%%%%%%%%%%%%%%%%%%%%%%%%%%%%%%%%%%%%%%%%%%%%%%%%%%%%%%%%%%%%%%%%%%%%%%%%%%%%%%%%%%%%%%%%%%%%%%%%%%%%%%%%%%%%%%%%%%%%%%%%%%%%%%%%%%%%%%%%%%%%%%%%%%%%%%%%%%%%%%%%%%%%%%%%%%%%%%%%%%%%%%%%%%%%%%%%%%%%%%%%%%%%%%%%%%%%%%%%%%%%%%%%%%%%%%%%%%%%%%%%%%%%%%%%%%%%%%%%%%%%%%%%%%%%%%%%%%%%%%%%%%%%%%%%%%%%%%%%%%%%%%%%%%%%%%%%%%%%%%%%%%%%%%%%%%%%%%%%%%%%%%%%%%%%%%%%%%%%%%%%%%%%%%%%%%%%%%%%%%%%%%%%%

\section{Cylindrical Tableaus, Gelfand-Tsetlin patterns and non-intersecting paths }
Here we go over the analogous  correspondence for cylindrical tableaus as in \parencite[section 4]{DW08}. We embed $T^{N,\wedge}_{N}$ in a space parametrized by $\bf{x}=(x^{1},...,x^{N},x^{N+1},...)$ with $x^{k}= (x_{1}^{k},...,x_{\min(k,N)}^{k})\in \mb{Z}^{k}, k\geq 1$ satisfying the following constraints: for $k=2,...,N$ 
\begin{eqalign}
&x_{k}^{k}< x^{k-1}_{k-1}\leq  x^{k-1}_{k-2}\leq \cdots \leq x_{2}^{k}\leq x^{k-1}_{1}< x_{1}^{k},   \\
\tand &x^{N}_{1}\leq L-N+x_{N}^{N},
\end{eqalign}
and for $k=N+1,...$ 
\begin{eqalign}
&x_{N}^{k}\leq x^{N-1}_{k}\leq...\leq x_{1}^{k},\\
&x^{N-1}_{k}\leq x^{N-1}_{k-1}.
\end{eqalign}
Pictorially we have an infinite wedge array
\begin{eqalign}
 \bf{x}=\quad\quad&\begin{matrix}
 \vdots & &\vdots &&&&&\vdots&\vdots\\ 
 x_{N}^{N+2}&\leq & x_{N-1}^{N+2}&\leq &&\cdots&&\leq x_{2}^{N+2}&\leq x_{1}^{N+2}\\
  &  \rotatebox[origin=c]{320}{$\leq $}&    && &&&\rotatebox[origin=c]{320}{$\leq $}&\rotatebox[origin=c]{90}{$\leq $} \\ 
 x_{N}^{N+1}&\leq & x_{N-1}^{N+1}&\leq &&\cdots&&\leq x_{2}^{N+1}&\leq x_{1}^{N+1}\\ 
 &  \rotatebox[origin=c]{320}{$\leq $}&    && &&&\rotatebox[origin=c]{320}{$\leq $}&\rotatebox[origin=c]{90}{$\leq $} \\ 
 x_{N}^{N}&\leq & x_{N-1}^{N}&\leq &&\cdots&&\leq x_{2}^{N}&\leq x_{1}^{N}\\ 
  \rotatebox[origin=c]{320}{$<$}&  & \rotatebox[origin=c]{320}{$<$}   && &&&\rotatebox[origin=c]{320}{$<$}&\rotatebox[origin=c]{60}{$<$} \\ 
  &x_{N-1}^{N-1}&    \leq & x_{N-2}^{N-1}  &&\cdots&&\leq & x_{1}^{N-1} \\
&  \ddots &  &   && &&\udots& \\ 
 &&  \rotatebox[origin=c]{320}{$<$}   &   &&& \rotatebox[origin=c]{60}{$<$}& & \\
  &&    &   x_{2}^{2} &\leq&x_{1}^{2}& & & \\
 &&    &  \rotatebox[origin=c]{320}{$<$}&&\rotatebox[origin=c]{60}{$<$}& &  \\
  &  &  &  &x_{1}^{1}&&& & \\
\end{matrix},    
\end{eqalign}
and $x^{N}_{1}\leq L-N+x_{N}^{N}$ means that the shifted array $\bf{x}+L-N$ is on the right of $\bf{x}$. The correspondence with a tableau $\CT\in T^{N,\wedge}_{N}$ is that $x^{j}_{i}$ ,for $i\in [N],j\geq 1$, mark the position
of the last entry in row $i$ in $\CT$ whose label does not exceed $j$. So to get a specific entry we take differences. Here we allow $j$ to be arbitrarily large to accommodate for entries with $r\geq N+1$. For example, the number $2$ in the first row shows up from $x_{1}^{1}+1$ to $x_{1}^{2}$. We write $K^{N}$ for the set of all $\bf{x}$, called \textit{cylindrical Gelfand-Tsetlin (cGT) patterns}. \\
However, because we only do finitely many iterations of cylindrical-RSK, there is always some large enough number $M_{\CT}\geq N$ such that the array repeats after i.e.
\begin{eqalign}
x_{k}^{M_{\CT}}=x_{k}^{M_{\CT}+1}=x_{k}^{M_{\CT}+2}=\cdots , \tfor k=1,...,N.    
\end{eqalign}
We set 
\begin{itemize}
    
\item $sh(\bf{x}):=(x_{1}^{M_{\CT}},x_{2}^{M_{\CT}},...,x_{N}^{M_{\CT}})$,
    
\item $ledge(\bf{x}):=(x_{1}^{1},x_{2}^{2},...,x_{N}^{N})$,

\item and  $redge(\bf{x}):=(x_{1}^{1},x_{1}^{2}...,x_{1}^{N})$. 
    
\end{itemize}
We have a bijection of cGT-patterns with cylindrical non-intersecting paths. Each array $\bf{x}\in K^{N}$ with $z:=sh(\bf{x})$ and $y:=ledge(\bf{x})$ %and $r=redge(\bf{x})$
can be encoded by an (N-1)-tuple of non-intersecting paths $(P_{1},...,P_{N-1})$ along the edges of $\mb{Z}^{2}$ and travelling upright. The path $P_{k}$ runs from $(y_{k}-k,k+1)$ to $(z_{k}-k,M_{\CT})$ and it contains the horizontal edges from $(x_{k}^{r-1}-k,r)$ to $(x_{k}^{r}-k,r)$ for $r=k+1,...,M_{\CT}$. For the periodic shifts we consider the shift
\begin{eqalign}
\mathcal{S} (a,b):=(-(L-N)+a,b+M_{\CT})\tand \mathcal{S}^{-1} (a,b):=(L-N+a,b-M_{\CT}).    
\end{eqalign}

\begin{example}
For $N=4,L=7$ we have the following correspondence of the cylindrical tableau
\begin{eqalign}
\CT=&\ytableausetup{textmode} \begin{ytableau}
\none&\none&\none&\none &\none & \none &\none&\none   &\none &\none& \none& ~\\ 
\none&\none&\none &\none & \none &\none&\none   &-3 &-2& -1& -1\\ 
\none&\none&\none &\none &\none &\none&\none& -2&-1&0&0 \\ 
\none&\none&\none &\none &\none &\none &\none &-1&0\\
\none&\none&\none &\none &\none &\none & \none &0&3 \\
 \none&\none&\none &\none&1& 2  &3&3  \\
 \none&\none&\none &\none &2 &3&4&4 \\
 \none&\none&\none &\none&3 &4 \\
 \none&\none&\none &\none&4 &7 \\
 \none&5& 6  &7&7  \\
 \none&6 &7&8&8 \\
 \none&7 &8 \\
 \none&8 &11 \\
 ~
\end{ytableau}\\
\end{eqalign}
with the infinite array
\begin{eqalign}
\bf{x}_{\CT}=\quad\quad &\begin{matrix}&&\vdots&&\\x_{4}^{8}&x_{3}^{8}&x_{2}^{8}&x_{1}^{8}\\x_{4}^{7}&x_{3}^{7}&x_{2}^{7}&x_{1}^{7}\\x_{4}^{6}&x_{3}^{6}&x_{2}^{6}&x_{1}^{6}\\x_{4}^{5}&x_{3}^{5}&x_{2}^{5}&x_{1}^{5}\\x_{4}^{4}&x_{3}^{4}&x_{2}^{4}&x_{1}^{4}\\x_{3}^{3}&x_{2}^{3}&x_{1}^{3}\\x_{2}^{2}&x_{1}^{2}\\x_{1}^{1}
\end{matrix}=\begin{matrix}&&\vdots&&\\2&2&4&4\\2&2&4&4\\1&2&4&4\\1&2&4&4\\1&2&4&4\\1&2&4\\1&2\\1
\end{matrix}
\end{eqalign}
We observe here that $M_{\CT}=7$ since the rows repeat after the $7$th row. The corresponding non-intersecting paths (for $y=(1,1,1,1)$ and $z=(4,4,2,2)$) are the following
\begin{figure}[H]
\centering
\includegraphics[scale=.7]{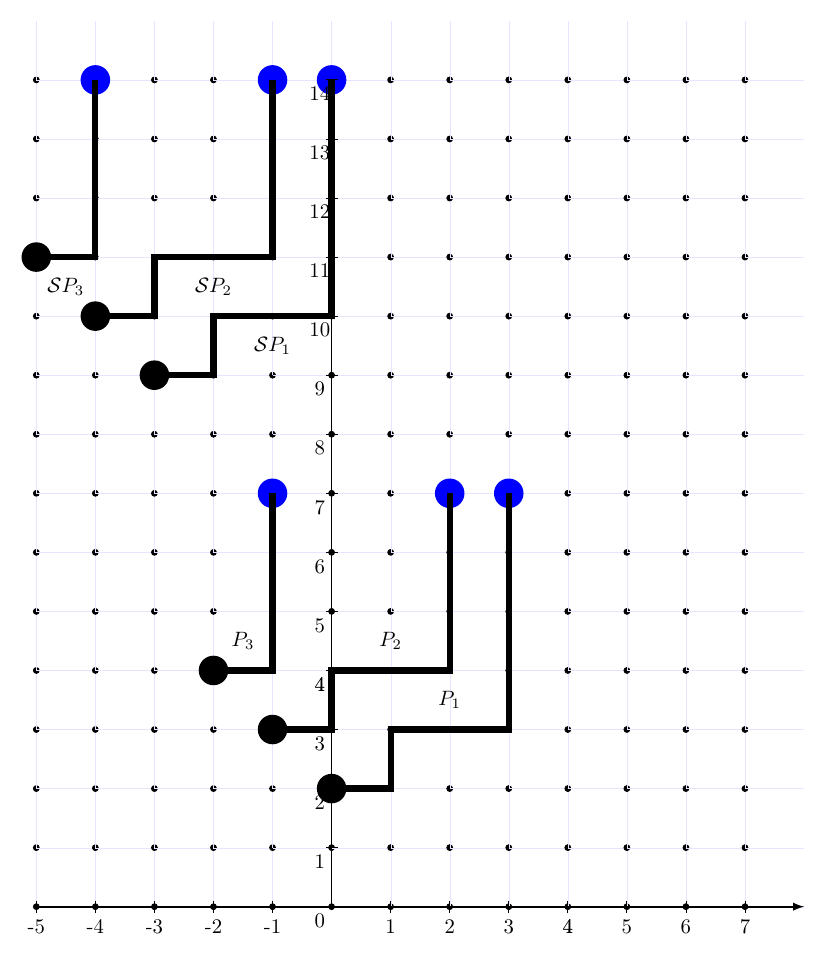}
\end{figure}

\end{example}
\newpage \section{Inversion in \cite{DW08} using flagged Schur and Posets}
In \cite[proposition 3]{DW08}, they construct an inverse operator $\Pi(y,z)$ for $K_{p}$ as a determinant of elementary symmetric polynomials and thus prove inversion using the Cauchy-Binet identity. The issue in the cylindrical case ,as seen in \cref{eq:contourformSchutzcircle}, is that we are dealing with sums of determinants and so there is no clear way to apply the Cauchy-Binet identity (see \cite{huh2023bounded} for progress in that direction). Therefore, we tried to avoid the use of determinants and instead utilize orthogonality/duality properties for Young tableaus and Schur functions. In this section, we represent an alternative proof for the inversion in \cite{DW08} using the framework of flagged Schur and posets. \\
Instead of studying $K_{p}$, it is equivalent but more convenient to work with modified versions which do not involve cylindrical Schur functions but have a polynomial prefactor. We define for $y\preceq z\in W_{N}$, 
\begin{eqalign}\label{eq:Lambdadefinitionrealline}
\Lambda(z,y)=p_{1}^{-y_{1}}\cdots p_{N}^{-y_{N}}\sumls{T\in \T_{N}^{N, \wedge }\\\text{sh}(T)=z,\text{ledge}(T)=y  }p^{T},   \end{eqalign}
and zero otherwise (i.e. when $z_{i_{*}}<y_{i_{*}}$ for some $i_{*}\in [N]$). So the infinite matrix $\para{\Lambda(z,y)}_{(z,y)\in  W_{N,L}}$ is upper triangular. In \parencite[Proposition 2]{DW08} they show that for $y,z\in W^{N}$
\begin{eqalign}
  \Lambda(z,y)=\det\spara{h^{(jN)}_{z_{i}-y_{j}-i+j}(p)}.  
\end{eqalign}
Then in \parencite[Proposition 3]{DW08}, they prove that this matrix is invertible with inverse
\begin{eqalign}
  \Pi(y,z)=\det\spara{(-1)^{y_{i}-z_{j}-i+j}e^{(iN)}_{y_{i}-z_{j}-i+j}(p)}.  
\end{eqalign}
As described in \cite{Schurflagged}, by using the Jacobi-Trudi identity for flagged Schur for flags $f_{1}:=(2,\dotsc,N)$ and $f_{2}:=(N,\dotsc,N)$ we have
\begin{equation}\label{eq:SchurrowLambda}
\Lambda(z,y)=s_{z/y}(f_{1},f_{2};p)=\sum_{T, \operatorname{sh}(T)=z/y}p^{w(T)},    
\end{equation}
where we sum over semistandard tableaux $T$ whose shape is given by the skew diagram $z/y$ such that the entries in the ith row ,for $1\leq i\leq N-1$, must be from the set $\{i+1,\dotsc,N\}$. We can obtain \cref{eq:SchurrowLambda} directly as well from \cref{eq:Lambdadefinitionrealline}, because we observe that the ith row of $T$ has $y_{i}$-number of i's and the rest of the entries are from $\{i+1,\dotsc,N\}$. Similarly for flags $f_{3}:=(1,\dotsc,1)$ and $f_{4}:=(N-1,N-2,\dotsc,1)$ we have
\begin{equation}\label{eq:SchurcolumnPi}
\Pi(z,y)=s_{z/y}^{*}(f_{3},f_{4};-p_{N+1-\bullet})=\sum_{T, \operatorname{sh}(T)=z^{t}/y^{t}}(-p_{N+1-\bullet})^{w(T)},    
\end{equation}
where we sum over semistandard tableaux $T$ whose shape is given by the \textit{conjugate} skew diagram $z^{t}/y^{t}$ and the entries in the jth column, for $1\leq j\leq N-1$, must be from the set $\{1,\dotsc,N-j\}$ and we had to revert the weights to 
\begin{equation}
-p_{N+1-\bullet}:=(-p_{N},-p_{N-1},\dotsc,-p_{1}).    
\end{equation}
\begin{lemma}\label{lem:inversionLamdbaPirealline}
For fixed partitions $x\subset z$, we get
\begin{equation}
\sum\limits_{x\subseteq y\subseteq
z}\Lambda\left(  y,x\right)  \Pi\left(  z,y\right)  =1_{x=z}.    
\end{equation}
Namely, $\Pi$ is the inverse of $\Lambda$.
\end{lemma}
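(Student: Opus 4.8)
The plan is to sidestep Cauchy--Binet entirely and cancel terms by a sign-reversing involution on flagged tableaux; this is the form of the argument that has a chance of surviving to the cylindrical case. First I would expand the left-hand side with \cref{eq:SchurrowLambda} and \cref{eq:SchurcolumnPi}: a term of $\sum_{x\subseteq y\subseteq z}\Lambda(y,x)\Pi(z,y)$ is a triple $(y,T_{1},T_{2})$ with $T_{1}$ a semistandard tableau of shape $y/x$ whose $i$th row uses only entries from $\{i+1,\dots,N\}$, and $T_{2}$ a semistandard tableau of the conjugate shape $z^{t}/y^{t}$ whose $j$th column uses only entries from $\{1,\dots,N-j\}$, weighted by $(-1)^{|z/y|}\,p^{w(T_{1})}\,(p_{N+1-\bullet})^{w(T_{2})}$. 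Conjugating $T_{2}$ and reversing its entry labels by $a\mapsto N+1-a$ turns it into a filling $\widetilde T_{2}$ of the ordinary skew shape $z/y$ whose $i$th row again uses only entries from $\{i+1,\dots,N\}$, of weight $(-1)^{|z/y|}p^{w(\widetilde T_{2})}$, but with monotonicity complementary to an SSYT --- reading a row of the glued shape $z/x$ one meets a weakly increasing block coming from $T_{1}$ followed by a strictly decreasing block coming from $\widetilde T_{2}$. So every term becomes a single filling $\Phi$ of $z/x$ together with a ``cut'' $x\subseteq y\subseteq z$ at which $\Phi$ restricts to $T_{1}$ on $y/x$ and to $\widetilde T_{2}$ on $z/y$, carrying sign $(-1)^{|z/y|}$ and weight $p^{w(\Phi)}$; the claim is that for fixed $x\subseteq z$ the signed weighted sum over all such $(\Phi,y)$ equals $1_{x=z}$.

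When $x=z$ the only pair is the empty filling with $y=x=z$, contributing $+1$. When $x\subsetneq z$ I would build an involution $\iota$ on the set of pairs $(\Phi,y)$ that fixes $\Phi$ and flips the sign; since $p^{w(\Phi)}$ does not involve $y$, it is enough, for each fixed $\Phi$, to find a sign-reversing involution on the nonempty set of cuts $y$ compatible with $\Phi$. This is the one-row incarnation of the classical involutive proof of $\sum_{k}(-1)^{k}e_{k}h_{m-k}=\delta_{m,0}$: in a suitably chosen row $i$, compare the entry $u:=\Phi(i,y_{i})$ just left of the cut with the entry $v:=\Phi(i,y_{i}+1)$ just right of it, with the conventions $u:=-\infty$ if the $T_{1}$-block of row $i$ is empty and $v:=-\infty$ if the $\widetilde T_{2}$-block is empty, declaring rows in which both blocks are empty inert; if $v\ge u$ move the cut one step right in row $i$ (the cell $(i,y_{i}+1)$ joins the $T_{1}$-block as its new last cell), otherwise move it one step left (the cell $(i,y_{i})$ leaves the $T_{1}$-block for the $\widetilde T_{2}$-block). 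On the row in isolation this is visibly an involution that flips $(-1)^{z_{i}-y_{i}}$, leaves $\Phi$ and hence the weight untouched, and keeps the row legal, since $v\ge u$ is exactly the condition for the absorbed cell to preserve the weak increase of the $T_{1}$-block and $v<u$ the condition for the released cell to preserve the strict decrease of the $\widetilde T_{2}$-block.

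The step I expect to be the main obstacle is choosing the row $i$ in which $\iota$ acts so that the move is simultaneously globally legal and independent of which of the two paired configurations one starts from. Shifting the cut in row $i$ can violate the column-strictness of $T_{1}$ between rows $i$ and $i+1$, or the column monotonicity of $\widetilde T_{2}$, or can make $y$ cease to be a partition; so not every locally legal move is admissible, and one must show an admissible one always exists when $z/x\neq\emptyset$ --- I would look for it at an appropriate corner of $z/x$, where the vertical constraints are slack, scanning cells along a fixed linear extension of their coordinatewise order and acting at the first admissible one, then checking the move does not disturb cells scanned earlier. That the flag window $\{i+1,\dots,N\}$ of row $i$ contains the window $\{i+2,\dots,N\}$ of row $i+1$ is what makes the window constraint survive any such move automatically; this nesting is the combinatorial reflection of the telescoping of complete- and elementary-symmetric generating functions over nested variable sets behind the determinantal proof of \cite{DW08}. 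The ``posets'' language in the acknowledgements is the natural bookkeeping here: viewing $\Lambda$ and $\Pi$ as unitriangular elements of the incidence algebra of $W^{N}$ under containment reduces $\Pi=\Lambda^{-1}$ to the single relation $\Pi\cdot\Lambda=\delta$, which the involution proves; failing that, I would induct on $|z/x|$, peeling off the cells of smallest value and checking that the formula \cref{eq:SchurcolumnPi} for $\Pi$ obeys the recursion characterizing $\Lambda^{-1}$.
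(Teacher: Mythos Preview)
Your strategy coincides with the paper's: rewrite $\Pi(z,y)$, via conjugation and the relabelling $a\mapsto N+1-a$, as a signed sum over anti-coSSYTs of shape $z/y$ with row-$i$ entries $>i$; glue with the SSYT $T_1$ of shape $y/x$ into a single ``broken tableau'' $R$ of shape $z/x$ equipped with a cut $y$; then, for each fixed $R$, show that the signed sum over admissible cuts $y$ vanishes. The paper carries out exactly these three steps (equations \eqref{eq.darij2.1new}--\eqref{eq.darij2.6}).

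Where you and the paper diverge is in the last step. You try to build the sign-reversing involution by hand, moving the cut one cell in a single row, and you correctly isolate the difficulty: the move must be chosen so that it respects the column constraints of both pieces \emph{and} the partition condition on $y$, and so that it is genuinely involutive (the ``first admissible row under a linear extension'' rule is not obviously stable under the move). The paper sidesteps this by packaging the statement as an instance of \cref{lemma65dg15} (Grinberg's lemma on tertispecial double posets): set $E=\{\text{cells of }z/x\}$ with $<_1$ the componentwise order and $<_2$ the ``southwest'' order, observe that $(P,Q)\in\operatorname{Adm}\mathbf{E}$ corresponds exactly to a cut $x\subseteq y\subseteq z$, and that the $(E,<_1,>_2)$- and $(E,>_1,>_2)$-partition conditions on $\pi=R$ recover precisely ``$R|_{y/x}$ is SSYT'' and ``$R|_{z/y}$ is anti-coSSYT''. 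The involution you are searching for is the one inside the proof of \cref{lemma65dg15} in \cite{grinberg2015double}; it acts not on a single row but on a carefully chosen $<_1$-cover pair, and the tertispecial hypothesis is exactly what guarantees the move is well-defined and involutive. So your plan is right, and the missing ingredient is already isolated and proved as that lemma --- either cite it, or consult its proof for the correct involution, rather than the row-by-row one.
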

\subsection{Posets}
In this section we follow \cite{grinberg2015double}. We start by recording some of the definitions from \cite[Definition 3.1.]{grinberg2015double}.
\begin{itemize}
    \item  We shall encode \textit{posets} as pairs $(E, <)$, where $E$ is a set and
$<$ is a strict partial order (i.e., an irreflexive, transitive and antisymmetric
binary relation) on the set $E$; this relation $<$ will be regarded as the smaller
relation of the poset.

    \item A \textit{double poset} is defined as a triple $\bE=(E, <1, <2)$ where $E$ is a finite set and $<_1$ and $<_2$ are two strict partial orders on $E$. In our setting the double poset $\bE$ will be the collection of cells of the skew Young diagram and their ordering. 

    \item If $<$ is a strict partial order on a set $E$, and if $a$ and $b$ are two elements of $E$, then we say that $a$ and $b$ are $<-$\textit{comparable} if we have either $a < b \tor a = b \tor b < a$. A strict partial order $<$ on a set $E$ is said to be a \textit{total order} if and only if every two elements of $E$ are $<-$comparable.

    \item If $<$ is a strict partial order on a set $E$, and if $a$ and $b$ are two elements of $E$, then we say that $a$ is $<-$\textit{covered by $b$} if we have $a < b$ and there exists no $c \in E$ satisfying $a < c < b$. (For instance, if $<$ is the standard smaller relation on $\zz$, then each $i \in\zz$ is $<-$covered by $i + 1$.)

    \item A double poset $\bE=(E, <1, <2)$ is said to be \textit{tertispecial} if it satisfies the following condition: If $a$ and $b$ are two elements of $E$ such that $a$ is $<_1$-covered by $b$, then $a$ and $b$ are $<_2$-comparable.
    \item We fix double poset $\bE=(E, <_1, <_2)$. Then we use $\mathrm{Adm}\bE$ to denote all pairs $(P,Q)$ of subsets of $E$ such that a)$P\cup Q= E$, b)$P\cap Q=\varnothing$ and c) no $p \in P$ and $q \in Q$ satisfy $q <_1 p$. 
    
    \item Fix map $\pi : E\to \set{1, 2, 3, ...}$, then we use $\mathrm{Adm}(\bE,\pi)$ to denote all pairs $(P,Q)\in \mathrm{Adm}\bE$ such that $\pi|_P$ is a $(P,>_1 ,<_2)$-partition and $\pi|_Q$ is a $(Q,<_1 ,<_2)$-partition.

\item If $<$ is a binary relation on a set $E$, then the \textit{opposite relation} of $<$ is defined to be the binary relation $>$ on the set $E$ that is defined as follows: For any $e \in E$ and $f \in E$, we have $e > f$ if and only if $f < e$. Notice that if $<$ is a strict partial order, then so is the opposite relation $>$ of $<$.

\item If $\bE=(E, <1, <2)$ is a double poset, then an \textit{$\bE$-partition} shall mean a map $\phi : E \to \set{1, 2, 3, . . .}$ such that
\begin{itemize}
    \item every $e \in E$ and $f \in E$ satisfying $e <_1 f$ satisfy $\phi(e) \leq \phi (f)$;

    \item  every $e \in E$ and $f \in E$ satisfying $e <_1 f$ and $f <_{2} e$ satisfy $\phi(e) < \phi (f)$.
\end{itemize}

\end{itemize}
We will use \cite[lemma 6.5]{grinberg2015double}.
\begin{lemma}\label{lemma65dg15}
Let $\bE = (E, <_1, <_2)$ be a tertispecial double poset satisfying $\abs{E} > 0$.
Let $\pi : E\to \set{1, 2, 3, ...}$ be a map. Let $>_1$ denote the opposite relation of $<_1$. Then
\begin{equation}
\sum\limits_{(P,Q)\in \mathrm{Adm}(E,\pi)}(-1)^{\abs{P}}=0.    
\end{equation}
    
\end{lemma}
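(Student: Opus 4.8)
The plan is to prove the vanishing of $\sum_{(P,Q)\in \mathrm{Adm}(\bE,\pi)}(-1)^{\abs{P}}$ by exhibiting a sign-reversing, fixed-point-free involution $\Phi$ on the finite set $\mathrm{Adm}(\bE,\pi)$. First I would reformulate the index set. Condition (c) in the definition of $\mathrm{Adm}\bE$ says precisely that $P$ is a down-set (order ideal) of the poset $(E,<_1)$ and $Q=E\setminus P$ is the complementary up-set; hence $\mathrm{Adm}\bE$ is in bijection with the $<_1$-down-sets of $E$, and the two $\bE$-partition requirements on $\pi|_P$ and $\pi|_Q$ merely single out the \emph{valid} down-sets. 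Thus the target reads $\sum_{\text{valid }P}(-1)^{\abs{P}}=0$, and the natural device is an involution $\Phi$ that inserts or deletes a single element of $P$, flipping the parity of $\abs{P}$, while preserving validity. Since $\abs{E}>0$ and every nonempty finite poset has an element at the frontier between $P$ and $Q$, such a move will always be available, so $\Phi$ will have no fixed points and the signed sum collapses in $\pm$-pairs.

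The heart of the argument is choosing which element to toggle. A single-element move respects the down-set structure only at the frontier: one may delete $e$ from $P$ exactly when $e$ is $<_1$-maximal in $P$, or insert $e$ into $P$ exactly when $e$ is $<_1$-minimal in $Q$; in either case the moved element $e$ sits at a $<_1$-covering relation crossing the $P/Q$ boundary. I would fix a linear extension $\lessdot$ of $<_1$ and let $\Phi$ toggle a canonically chosen such frontier element (for instance the $\lessdot$-greatest element admitting a valid toggle). Here the \emph{tertispecial} hypothesis is exactly what makes the scheme coherent: at a cover where $a$ is $<_1$-covered by $b$, it forces $a$ and $b$ to be $<_2$-comparable, so precisely one of the two relevant $\pi$-inequalities is active -- the weak inequality coming from $<_1$, or the strict inequality triggered when the opposite $<_2$-relation also holds. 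A direct check on a single cover (the $\abs{E}=2$ model, with the neutral split against the two ``full'' configurations) shows that this comparability guarantees that \emph{exactly one} of ``$e$ lies in $P$'' and ``$e$ lies in $Q$'' satisfies the required strict/weak constraints. This rules out the degenerate ``both-valid'' and ``neither-valid'' ties, which genuinely occur when $<_2$-comparability fails (one checks that tertispeciality is not merely convenient but necessary), and it is exactly this dichotomy that produces the $+1/-1$ cancellation.

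The main obstacle, and the step demanding the most care, is \emph{global coherence}: showing that $\Phi$ is a genuine involution with $\Phi^2=\mathrm{id}$ rather than just a locally well-defined move. Two things must be established. First, the toggle must preserve \emph{all} the $(P,>_1,<_2)$- and $(Q,<_1,<_2)$-partition inequalities, not only the one cover being crossed; here the opposite monotonicity of $\pi$ on $P$ (order-reversing along $<_1$) versus on $Q$ (order-preserving along $<_1$) has to be reconciled with the extremality of the moved element, again leaning on tertispeciality at every cover incident to the frontier. Second, the canonical selection rule must be stable under the move: the element selected from $(P,Q)$ must coincide with the element selected from $\Phi(P,Q)$, so that applying $\Phi$ twice returns the original configuration; this is where the $\lessdot$-greatest-valid-toggle rule (or a suitable refinement of it) has to be shown to commute with the insertion/deletion. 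Once these two verifications are in place, pairing each $(P,Q)$ with $\Phi(P,Q)$ matches configurations of opposite sign, and the sum is $0$, as claimed.
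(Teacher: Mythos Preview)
The paper itself does not prove this lemma; it quotes it verbatim from \cite[Lemma~6.5]{grinberg2015double} and uses it as a black box in the proof of \cref{lem:inversionLamdbaPirealline}. So there is no in-paper argument to compare against. Your sign-reversing involution strategy is precisely the method used in the cited reference, and your reading of the tertispecial hypothesis---that at every $<_1$-cover it forces $<_2$-comparability and hence decides which of the weak/strict $\pi$-inequalities is in force---is the correct conceptual core.

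That said, what you have written is a plan rather than a proof: the two ``verifications'' you postpone in your final paragraph are essentially the whole argument. In particular, the rule ``toggle the $\lessdot$-greatest frontier element admitting a valid toggle'' is not obviously an involution, since the set of elements admitting a valid toggle can change after the move; an element $f$ with $e\lessdot f$ whose toggle was blocked by the position of $e$ may become togglable once $e$ has crossed sides, and then $\Phi^2\neq\mathrm{id}$. Grinberg's construction does not use this rule: the toggled element is located by a more structured search (walking along $<_1$-covers and using the $<_2$-comparability guaranteed by tertispeciality at each step) that is provably stable under the toggle. If you want a self-contained proof rather than a citation, you must pin down the selection rule exactly and carry out both stability checks; this is where the real work lies, and your current sketch does not yet do it.
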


\subsection{{Proof of \cref{lem:inversionLamdbaPirealline} using posets}}
\begin{proof}We follow the argument described in \cite{478012}. We will consider \textit{co-semistandard} tableaux (coSSYT) that has rows that are left to right weakly increasing (nondecreasing) sequences and its columns are top to bottom strictly increasing sequences. The \textit{anti-cosemistandard} tableau (anti-coSSYT) has entries that strictly decrease along rows and weakly increase
along columns. In this language, we observe that 
\begin{equation}\label{eq.darij2.1old}
\Pi\left(  z,y\right)  =\sum_{\mathrm{coSSYT}~T}\left(
-p_{N+1-\bullet}\right)  ^{w\left(  T\right)  },
\end{equation}
where we sum over all coSSYTs $T$ of shape $z/y$ and all entries in row $i$ of $T$ are less or equal to $N-i$. By making the substitution that replaces every entry $k$ of the coSSYT by $N+1-k$, we obtain 
\begin{equation}\label{eq.darij2.1new}
\Pi\left(  z,y\right)  =\sum_{\mathrm{anti-coSSYT}~S}\left(
-p\right)  ^{w\left(  S\right)  },
\end{equation}
where we sum over all anti-coSSYTs $S$ of shape $z/y$ and all entries in row $i$ of $S$ are strictly greater than $i$. This substitution has the effect of turning coSSYTs into anti-coSSYTs and
vice versa, and also sends $\left(  -p_{N+1-\bullet}\right)  ^{w\left(
T\right)  }$ to $\left(  -p\right)  ^{w\left(  S\right)  }$ because the
exponent on $-p_{N+1-k}$ becomes the exponent on $-p_{k}$.\\
Furthermore, the addend $\left(  -p\right)  ^{w\left(  S\right)  }$ in \eqref{eq.darij2.1new} can be rewritten as $\left(  -1\right)  ^{\left\vert z/y\right\vert
}p^{w\left(  S\right)  }$ (since $\left\vert w\left(  S\right)  \right\vert
=\left\vert z/y\right\vert $). Thus, \eqref{eq.darij2.1new} rewrites as
\begin{equation}\label{eq.darij2.1newest}
\Pi\left(  z,y\right)  =\left(  -1\right)  ^{\left\vert z/y\right\vert }
\sum_{\mathrm{anti-coSSYT}~S}p^{w\left(  S\right)
},
\end{equation}
where $S$ is an anti-coSSYT of shape $z/y$ all entries in row $i$ of $S$ are strictly greater than $i$. We similarly have for $\Lambda$
\begin{equation}\label{eq.darij2.2}
\Lambda\left(  y,x\right)  =\sum_{\mathrm{SSYT}~T}p^{w\left(  T\right)  },
\end{equation}
where we sum over semistandard tableau (SSYT) $T$ of shape $y/x$ such that all entries in row $i$ of $T$ are strictly greater than $i$. Therefore, we can combine the product into single sum
\begin{eqalign}\label{eq.darij2.4}
\Lambda\left(  y,x\right)\Pi\left(  z,y\right)=&\left(  -1\right)^{\left\vert z/y\right\vert }\sum_{\mathrm{anti-coSSYT}~S}~~\sum_{\mathrm{SSYT}~T}p^{w\left(  T\right) +w\left(  S\right)}\\
=&\left(  -1\right)  ^{\left\vert z/y\right\vert }\sum_{R}p^{w(R)},    
\end{eqalign}
where we sum over \textit{broken tableau} $R$ of shape $z/y/x$, that is a tableau of shape $z/x$ whose restriction to $z/y$ is an anti-coSSYT whereas
its restriction to $y/x$ is an SSYT, and all entries in row $i$ of R are strictly greater than $i$. The broken tableaux $R$ of shape
$z/y/x$ are in bijection with the pairs $\left(  T,P\right)  $ of an SSYT $T$
of shape $y/x$ and an anti-coSSYT $P$ of shape $z/y$ (and the bijection simply
sets $T:=R\mid_{y/x}$ and $P:=R\mid_{z/y}$).\\
Next we prove that for fixed partitions $x\subset z$, we get
\begin{equation}
\sum\limits_{x\subseteq y\subseteq
z}\Lambda\left(  y,x\right)  \Pi\left(  z,y\right)  =1_{x=z}.    
\end{equation}
For $x=z$ we get a single empty tableau $R$ and so it is indeed $1$. Therefore, assuming that $x\neq z$, we must show that $\sum\limits_{x\subseteq y\subseteq
z}\Lambda\left(  y,x\right)  \Pi\left(z,y\right)  =0$. Using \eqref{eq.darij2.4}, we get
\begin{align}\label{eq.darij2.5}
 \sum\limits_{x\subseteq y\subseteq
z}\Lambda\left(  y,x\right)  \Pi\left(  z,y\right)
\nonumber& =\sum\limits_{x\subseteq y\subseteq
z}\left(  -1\right)  ^{\left\vert z/y\right\vert }\sum_{R}p^{w(R)}\\
& =\sum_{\substack{R\text{ is a tableau of shape }z/x;\\\text{all entries in
row }i\text{ of }R\text{ are }>i}}\ \ \left(  \sum
\limits_{\substack{y;\\R\text{ is a broken tableau}\\\text{of shape }
z/y/x}}\left(  -1\right)  ^{\left\vert z/y\right\vert }\right)  p^{w\left(
R\right)  },
\end{align}
where in the inner sum we are going over all partitions $y$ with $x\subseteq y\subseteq
z$ such that the tableau $R$ is in fact a broken tableau of shape $z/y/x$. To prove the identity, it suffices to show that by fixing a tableau $R$ in the outer sum,  the inner sum satisfies
\begin{equation}\label{eq.darij2.6}
\sum\limits_{\substack{y;\\R\text{ is a broken tableau}\\\text{of shape
}z/y/x}}\left(  -1\right)  ^{\left\vert z/y\right\vert }=0.
\end{equation}
We will prove this by expressing it in the framework of \cref{lemma65dg15}. We define the double poset $\mathbf{E}=\left(  E,<_{1},<_{2}\right)  $ as follows:
\begin{itemize}
    \item $E$ is the set of all cells of the skew Young diagram $z/x$.

    \item The first partial
order $<_{1}$ is given by
\begin{align*}
\left(  a,b\right)  \ <_{1}\ \left(  c,d\right)  \ \Longleftrightarrow
\ \left(  a\leq c\text{ and }b\leq d\text{ and }\left(  a,b\right)
\neq\left(  c,d\right)  \right)  .
\end{align*}
\item The second partial order $<_{2}$ is given by
\begin{align*}
\left(  a,b\right)  \ <_{2}\ \left(  c,d\right)  \ \Longleftrightarrow
\ \left(  a\leq c\text{ and }b\geq d\text{ and }\left(  a,b\right)
\neq\left(  c,d\right)  \right)  .
\end{align*}

\end{itemize}
The $\mathbf{E}$ is a tertispecial double poset and satisfies $\left\vert
E\right\vert >0$ (since $x\neq z$). Indeed, as explained in \cite[Example 3.3]{grinberg2015double}, if we have that $\left(  a,b\right)$  is $<_{1}-$covered by $ \left(  c,d\right)$, then $\left(  a,b\right)$ is either the left or top immediate neighbour of $ \left(  c,d\right)$, and so either $\left(  c,d\right)<_{2}\left(  a,b\right)$ or $\left(  a,b\right)<_{2}\left(  c,d\right)$.\\
Let $>_{1}$ and $>_{2}$ denote the
opposite relations of $<_{1}$ and $<_{2}$, respectively. Pick an integer $M$
that is larger than each entry of the tableau $R$. Then, \cref{lemma65dg15} says that any map $\pi:E\rightarrow\left\{  1,2,3,\ldots\right\}  $
satisfies
\begin{equation}\label{eq.darij2.9}
\sum_{\substack{\left(  P,Q\right)  \in\operatorname{Adm}{\mathbf{E}}
;\\\pi\mid_{P}\text{ is a }\left(  P,>_{1},<_{2}\right)  \text{-partition;}
\\\pi\mid_{Q}\text{ is a }\left(  Q,<_{1},<_{2}\right)  \text{-partition}
}}\left(  -1\right)  ^{\left\vert P\right\vert }=0.
\end{equation}
The pairs $\left(  P,Q\right)
\in\operatorname{Adm}{\mathbf{E}}$ are in bijection with the partitions $y$
satisfying $x\subseteq y\subseteq z$. Indeed, for each such partition $y$, we get a corresponding pair $\left(  P,Q\right)  \in
\operatorname{Adm}{\mathbf{E}}$ with $P=\left\{  \text{cells of
}y/x\right\}  $ and $Q=\left\{  \text{cells of }z/y\right\}  $. Thus, we can express it as
\begin{equation}\label{eq.darij2.9b}
\eqref{eq.darij2.9}=\sum_{\substack{y;\\\pi\mid_{y/x}\text{ is a }\left(  y/x,>_{1},<_{2}\right)
\text{-partition;}\\\pi\mid_{z/y}\text{ is a }\left(  z/y,<_{1},<_{2}\right)
\text{-partition}}}\left(  -1\right)  ^{\left\vert y/x\right\vert
}=0,
\end{equation}
where the sum ranges over all partitions $y$ satisfying $x\subseteq y\subseteq
z$, and where we write $y/x$ and $z/y$ for the sets $\left\{  \text{cells of
}y/x\right\}  $ and $\left\{  \text{cells of }z/y\right\}  $, respectively. We can also reverse the arrows
\begin{equation}\label{eq.darij2.9c}
\eqref{eq.darij2.9b}=\sum_{\substack{y;\\\pi\mid_{y/x}\text{ is a }\left(  y/x,<_{1},>_{2}\right)
\text{-partition;}\\\pi\mid_{z/y}\text{ is a }\left(  z/y,>_{1},>_{2}\right)
\text{-partition}}}\left(  -1\right)  ^{\left\vert y/x\right\vert
}=0,
\end{equation}
by defining $M$ to be the largest value of $\pi$ and then replacing each value $k$ of $\pi$ by $M+1-k$ (so that all the inequalities in the conditions "$\pi\mid_{y/x}$ is a $\left(  y/x,>_{1},<_{2}\right)  $-partition" and "$\pi\mid_{z/y}$ is a $\left(  z/y,<_{1},<_{2}\right)  $-partition" get reversed). 

The tableau $R$ is a map from $E$ to $\left\{  1,2,3,\ldots\right\}  $ by assigning a number entry to each cell coordinate $(a,b)$. Thus,
we can apply \eqref{eq.darij2.9c} to $\pi=R$, and obtain
\begin{equation}\label{eq.darij2.9d}
\sum_{\substack{y;\\R\mid_{y/x}\text{ is a }\left(  y/x,<_{1},>_{2}\right)
\text{-partition;}\\R\mid_{z/y}\text{ is a }\left(  z/y,>_{1},>_{2}\right)
\text{-partition}}}\left(  -1\right)  ^{\left\vert y/x\right\vert
}=0.
\end{equation}
The condition "$R\mid_{y/x}$ is a $\left(  y/x,<_{1},>_{2}\right)
$-partition" under the summation sign here is saying precisely that the
restriction $R\mid_{y/x}$ is an SSYT of shape $y/x$. Likewise, the condition
"$R\mid_{z/y}$ is a $\left(  z/y,>_{1},>_{2}\right)  $-partition" is saying
precisely that the restriction $R\mid_{z/y}$ is an anti-coSSYT of shape $z/y$.
Thus, the two conditions taken together are saying that $R$ is a broken
tableau of shape $z/y/x$. In light of this, we can rewrite the sum on the left
hand side of \eqref{eq.darij2.9d} as $\sum\limits_{\substack{y;\\R\text{ is a
broken tableau}\\\text{of shape }z/y/x}}\left(  -1\right)  ^{\left\vert
y/x\right\vert }$. Thus, \eqref{eq.darij2.9d} rewrites as
\begin{equation}\label{eq.darij2.10}
\sum\limits_{\substack{y;\\R\text{ is a broken tableau}\\\text{of shape
}z/y/x}}\left(  -1\right)  ^{\left\vert y/x\right\vert }
=0.
\end{equation}
This is almost the desired equality \eqref{eq.darij2.6}. The only difference
is that the addend is $\left(  -1\right)  ^{\left\vert y/x\right\vert }$
instead of $\left(  -1\right)  ^{\left\vert z/y\right\vert }$. But this makes
little difference, since $\left\vert z/y\right\vert +\left\vert y/x\right\vert
=\left\vert z/x\right\vert $ and thus $\left\vert y/x\right\vert =\left\vert
z/x\right\vert -\left\vert z/y\right\vert $, so that $\left(  -1\right)
^{\left\vert y/x\right\vert }=\left(  -1\right)  ^{\left\vert z/x\right\vert
-\left\vert z/y\right\vert }=\left(  -1\right)  ^{\left\vert z/y\right\vert
}\left(  -1\right)  ^{\left\vert z/x\right\vert }$, and thus we can rewrite
\eqref{eq.darij2.10} as
\begin{align*}
\sum\limits_{\substack{y;\\R\text{ is a broken tableau}\\\text{of shape
}z/y/x}}\left(  -1\right)  ^{\left\vert z/y\right\vert }\left(  -1\right)
^{\left\vert z/x\right\vert }=0.
\end{align*}
Factoring the constant sign $\left(  -1\right)  ^{\left\vert z/x\right\vert }$
out of here, we arrive precisely at \eqref{eq.darij2.6}, as desired.

\end{proof}

\appendix 
%%%%%%%%%%%%%%%%%%%%%%%%%%%%%%%%%%%%%%%%%%%%%%%%%%%%%%%%%%%%%%%%%%%%%%%%%%%%%%%%%%%%%%%%%%%%%%%%%%%%%%%%%%%%%%%%%%%%%%%%%%%%%%%%%%%%%%%%%%%%%%%%%%%%%%%%%%%%%%%%%%%%%%%%%%%%%%%%%%%%%%%%%%%%%%%%%%%%%%%%%%%%%%%%%%%%%%%%%%%%%%%%%%%%%%%%%%%%%%%%%%%%%%%%%%%%%%%%%%%%%%%%%%%%%%%%%%%%%%%%%%%%%%%%%%%%%%%%%%%%%%%%%%%%%%%%%%%%%%%%%%%%%%%%%%%%%%%%%%%%%%%%%%%%%%%%%%%%%%%%%%%%%%%%%%%%%%%%%%%%%%%%%%%%%%%%%%%%%%%%%%%%%%%%%%%%%%%%%%%%%%%%%%%%%%%%%%%%%%%%%%%%%%%%%%%%%%%%%%%%%%%%%%%%%%%%%%%%%%%%%%%%%%%%%%%%%%%%%%%%%%%%%%%%%%%%%%%%%%%%%%%%%%%%%%%%%%%%%%%%%%%%%%%%%%%%%%%%%%%%%%%%%%%%%%%%%%%%%%%%%%%%%%%%%%%%%%%%%%%%%%%%%%%%%%%%%%%%%%%%%%%%%%%%%%%%%%%%%%%%%%%%%%%%%%%%%%%%%%%%%%%%%%%%%%%%%%%%%%%%%%%%%%%%%%%%%%%%%%%%%%%%%%%%%%%%%%%%%%%%%%%%%%%%%%%%%%%%%%%%%%%%%%%%%%%%%%%%%%%%%%%%%%%%%%%%%%%%%%%%%%%%%%%%%%%%%%%%%%%%%%%%%%%%%%%%%%%%%%%%%%%%%%%%%%%%%%%%%%%%%%%%%%%%%%%%%%%%%%
 \section{Limit to continuous time}
As mentioned in \parencite{DW08}, the "Case B: Bernoulli jumps with blocking" corresponds to the continuous time $\ZTASEP$ and they obtain the formula in \parencite{schutz1997exact}
\begin{equation} 
P(X(t)=x|X(0)=y)= \det\spara{F_{i-j}(x_{i}-y_{j},t)  }_{1\leq i,j\leq N},    
\end{equation}
where
\begin{eqalign}
F_{p}(n;t):=&e^{-t}\sumls{k=0}^{\infty}\binom{k+p-1}{p-1}\frac{t^{k+n}}{(k+n)!}\\
=&\branchmat{e^{-t}\sumls{k=0}^{\infty}\binom{k+p-1}{p-1}\frac{t^{k+n}}{(k+n)!}&\tcwhen p>0\\ e^{-t}\sumls{k=0}^{\abs{p}}(-1)^{k}\binom{\abs{p}}{k}\frac{t^{k+n}}{(k+n)!} &\tcwhen p\leq 0},    
\end{eqalign}
and $\binom{a}{b}=\frac{\Gamma(a+1)}{\Gamma(b+1)\Gamma(a-b+1)}$. 
\begin{proposition}\label{prop:continuoustime}
We will show that for equal rates $a_{k}=a=1$ and initial data $y_{i}=i$
\begin{eqalign}
&\prod_{k=1}^{N}\para{1-\frac{a_{k}}{M}}^{\floor{Mt}}\para{\frac{\frac{a_{k}}{M}}{1-\frac{a_{k}}{M}}}^{x_{k}-y_{k}} \det\spara{v_{\floor{Mt}}^{(i,j)}(x_{i}-y_{j}-i+j) },    
\end{eqalign}
where 
\begin{eqalign}
\large{v_{\floor{Mt}}^{(i,j)}(x-i+j):=\branchmat{ \sum^{i-j}_{l=0} (-1)^{l}\binom{\floor{Mt}}{x+j-i+l}\ind{0\leq x+j-i+l\leq \floor{Mt}}\suml{i<k_{1}<...<k_{l}<j+1}\frac{\frac{a_{k_{1}}}{M}}{1-\frac{a_{k_{1}}}{M}}\cdots \frac{\frac{a_{k_{l}}}{M}}{1-\frac{a_{k_{l}}}{M}}  &\tcwhen j\geq i\\ \sum^{\infty}_{l=0} \binom{\floor{Mt}}{x+j-i+l}\ind{0\leq x+j-i+l}\sumls{i<k_{m}<j+1\\\sum^{N}_{m=1} k_{m}=l}\prod^{N}_{m=1}(\frac{\frac{a_{k_{m}}}{M}}{1-\frac{a_{k_{m}}}{M} })^{k_{m}}&\tcwhen j< i}} ,   
\end{eqalign}
converges as $M\to +\infty$ to the transition density
\begin{eqalign}
\det\spara{F_{i-j}(x_{i}-y_{j},t)  }_{1\leq i,j\leq N}.   
\end{eqalign}
 
\end{proposition}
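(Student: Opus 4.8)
The plan is to regard the displayed quantity as a scalar prefactor times a fixed $N\times N$ determinant whose entries are explicit sums of binomial coefficients, pass to the limit in each entry, and then exchange the limit with the determinant. The exchange is legitimate because, for matrices of a fixed size $N$, the determinant is a polynomial (hence continuous) function of the entries, so entrywise convergence forces convergence of the determinants.

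First I would simplify and redistribute the prefactor. With $a_k=1$ and $y_j=j$ the prefactor equals $\para{(1-1/M)^{\floor{Mt}}}^{N}\para{\frac{1/M}{1-1/M}}^{\sum_{k=1}^{N}(x_k-k)}$, and the argument $x_i-y_j-i+j$ of $v^{(i,j)}_{\floor{Mt}}$ collapses to $x_i-i$. By multilinearity of the determinant I push this prefactor into the rows and columns, choosing the splitting (one factor $(1-1/M)^{\floor{Mt}}$ together with one power of $\tfrac{1/M}{1-1/M}$ per row, the remaining powers of $\tfrac{1/M}{1-1/M}$ per column) for which each scaled entry is a discrete analogue of $F_{i-j}$: a sum of terms of the form $(1-1/M)^{\floor{Mt}}\binom{\floor{Mt}}{r}\para{\frac{1/M}{1-1/M}}^{r}$ multiplied by a bounded combinatorial weight built from the rates (for equal rates this weight is $(-1)^{l}\binom{j-i}{l}$ when $j\geq i$ and a multiplicity $\binom{l+(i-j)-1}{(i-j)-1}$ when $j<i$).

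Next I perform the entrywise limit. The two elementary facts are $(1-1/M)^{\floor{Mt}}\to e^{-t}$ and, for each fixed $r\geq 0$, $\binom{\floor{Mt}}{r}\para{\frac{1/M}{1-1/M}}^{r}=\frac{1}{r!}\prod_{s=0}^{r-1}\frac{\floor{Mt}-s}{M-1}\to\frac{t^{r}}{r!}$, the term vanishing identically for $r<0$ in accordance with the Gamma-function convention for $\binom{a}{b}$. For $j\geq i$ the $l$-sum defining $v^{(i,j)}$ is finite, so the scaled entry converges to $e^{-t}\sum_{l=0}^{j-i}(-1)^{l}\binom{j-i}{l}\frac{t^{m+l}}{(m+l)!}=F_{i-j}(m;t)$ at the appropriate spatial argument $m$. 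For $j<i$ the $l$-sum is infinite, and here I would invoke dominated convergence using the uniform, summable bound $\binom{\floor{Mt}}{r}\para{\frac{1/M}{1-1/M}}^{r}\leq\frac{\para{\floor{Mt}/(M-1)}^{r}}{r!}\leq\frac{C^{r}}{r!}$, so that the scaled entry converges to $e^{-t}\sum_{k\geq0}\binom{k+(i-j)-1}{(i-j)-1}\frac{t^{m+k}}{(m+k)!}=F_{i-j}(m;t)$. Hence the scaled matrix converges entrywise to the Schutz matrix $\spara{F_{i-j}(x_i-y_j;t)}_{1\leq i,j\leq N}$, and by continuity of $\det$ the full expression converges to $\det\spara{F_{i-j}(x_i-y_j;t)}$, which is the assertion of \cref{prop:continuoustime}.

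The step I expect to be the main obstacle is the entries with $j<i$ (equivalently $p=i-j>0$): there the defining sum is not compactly supported, so one must both produce the uniform summable domination above and check that the geometric/complete-homogeneous sums in the rate variables $\tfrac{a_k/M}{1-a_k/M}$ telescope to $F_{i-j}$ with exactly the spatial argument $x_i-y_j$ once the $-i+j$ shifts in the definition of $v^{(i,j)}$ are tracked carefully. Should the argument produced in the limit fail to match $x_i-y_j$ on the nose, the discrepancy is absorbed by elementary column operations based on the recursion $F_{p-1}(m;t)=F_p(m;t)-F_p(m+1;t)$ (equivalently, on the contour representation $F_p(n;t)=\oint e^{t(w-1)}\para{\frac{w}{w-1}}^{p}w^{-n-1}\frac{dw}{2\pi i}$ and the substitution $w\mapsto w/(w-1)$), neither of which changes the value of the determinant. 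The simplification at $a_k=1,\ y_j=j$, the multilinearity rearrangement, and the finite $j\geq i$ limits are routine.
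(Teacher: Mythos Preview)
Your proposal is correct and follows essentially the same route as the paper: absorb the scalar prefactor into the rows by multilinearity, pass to the limit entrywise by reducing $(1-1/M)^{\lfloor Mt\rfloor}\binom{\lfloor Mt\rfloor}{r}\bigl(\tfrac{1/M}{1-1/M}\bigr)^{r}\to e^{-t}\,t^{r}/r!$ and collapsing the rate--sums to the binomial weights $\binom{|i-j|}{l}$ (respectively $\binom{l+|i-j|-1}{|i-j|-1}$), and then invoke continuity of the determinant. Your treatment is in fact a bit cleaner than the paper's, which uses Stirling's formula for the binomial asymptotics and does not explicitly justify the limit--sum interchange in the $j<i$ case; your dominated--convergence bound handles that, and the fallback column operations via $F_{p-1}(m;t)=F_{p}(m;t)-F_{p}(m+1;t)$ turn out to be unnecessary once $y_{j}=j$ is used.
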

\begin{proofs}
For the first prefactor we simply have have $(1-\frac{a_{k}}{M})^{\floor{Mt}}\to e^{-a_{k}t}.$ We split cases depending on the sign of $p=i-j$.
%%%%%%%%%%%%%%%%%%%%%%%%%%%%%%%%%%%%%%%%%%%%%%%%%%%%%%%%%%%%%%%%%%%%%%%%%%%%%
\proofparagraph{Case $j\leq i$}
For a matrix entry with $j\leq i$ we have for $x:=x_{i}-y_{j}$
\begin{eqalign}
&\sum^{i-j}_{l=0} (-1)^{l}\binom{\floor{Mt}}{x+j-i+l}\ind{0\leq x+j-i+l\leq \floor{Mt}}\suml{i<k_{1}<...<k_{l}<j+1}\frac{\frac{a_{k_{1}}}{M}}{1-\frac{a_{k_{1}}}{M}}\cdots \frac{\frac{a_{k_{l}}}{M}}{1-\frac{a_{k_{l}}}{M}}.
\end{eqalign}
For fixed $t,x$ and large enough $M$ we have $ x+j-i+l\leq \floor{Mt}$ and also $1-\frac{a_{k_{l}}}{M}\approx 1$ and so 
\begin{eqalign}
&\approx \sum^{i-j}_{l=0} (-1)^{l}\binom{\floor{Mt}}{x+j-i+l}\suml{j<k_{1}<...< k_{l}<i+1}\frac{1}{M^{l}}a_{k_{1}}\cdots a_{k_{l}}.
\end{eqalign}
To have it match with \Schutz formula we set $a_{k}=a$ and we also bring inside the factor  $(\frac{a}{M})^{x_{i}-y_{i}}$ into the ith row.
\begin{eqalign}
&\sum^{i-j}_{l=0}\para{\frac{a}{M}}^{x_{i}-y_{i}+\ell} (-1)^{l}\binom{\floor{Mt}}{x+j-i+l}\suml{j<k_{1}<...< k_{l}<i+1}1.
\end{eqalign}
For the binomial term we use Stirling's formula
\begin{eqalign}\label{eq:binoSt}
&\binom{\floor{Mt}}{x+j-i+l}=\frac{(\floor{Mt})!}{(\floor{Mt}-(x+j-i+l))!(x+j-i+l)!}\\
&\approx \frac{1}{(x+j-i+l)!} \sqrt{\frac{\floor{Mt}}{\floor{Mt}-(x+j-i+l)}}\cdot \para{\frac{\floor{Mt}}{\floor{Mt}-(x+j-i+l)}}^{\floor{Mt}}\\
&\cdot \para{\frac{\floor{Mt}-(x+j-i+l)}{e}}^{x+j-i+l}.
\end{eqalign}
The square-root factor limits to 1 and the second factor limits to $e^{-(x+j-i+l)}$
\begin{eqalign}
\eqref{eq:binoSt}&\approx \frac{1}{(x+j-i+l)!}e^{-(x+j-i+l)}\cdot((\floor{Mt}-(x+j-i+l))/e )^{(x+j-i+l)}\\
&=\frac{1}{(x+j-i+l)!}(\floor{Mt}-(x+j-i+l))^{(x+j-i+l)}.\\    
\end{eqalign}
For the summation we have
\begin{eqalign}
&\suml{j<k_{1}<...< k_{l}<i+1} 1 = \binom{i+1-j-1}{l+1-1}=\binom{i-j}{l}.    
\end{eqalign}
All together yield
\begin{eqalign}
&\sum^{i-j}_{l=0} (-1)^{l}(\frac{a}{M})^{x_{i}-y_{i}+l-(x+j-i+l)} \binom{i-j}{l}   \frac{t^{x+j-i+l}}{(x+j-i+l)!}.
\end{eqalign}
By taking $y_{j}-y_{i}=j-i$ we are left with
\begin{eqalign}
&\sum^{i-j}_{l=0} (-1)^{l} \binom{i-j}{l}   \frac{t^{x+j-i+l}}{(x+j-i+l)!}.  \end{eqalign}
%%%%%%%%%%%%%%%%%%%%%%%%%%%%%%%%%%%%%%%%%%%%%%%%%%%%%%%%%%%%%%%%%%%%%%%%%%%%%
\proofparagraph{Case $j\geq  i$}
For a matrix entry with $j\geq  i$ we have for $x:=x_{i}-y_{j}$
\begin{eqalign}
&\LARGE{\sum^{\infty}_{l=0} \binom{\floor{Mt}}{x+j-i+l}\ind{0\leq x+j-i+l}\sumd{i<k_{m}<j+1}{\sum^{N}_{m=1} k_{m}=l}\prod^{N}_{m=1}(\frac{\frac{a_{k_{m}}}{M}}{1-\frac{a_{k_{m}}}{M} })^{k_{m}}}.
\end{eqalign}
For fixed t,x and large enough M we have $1-\frac{a_{k_{l}}}{M}\approx 1$ and so 
\begin{eqalign}
&\LARGE{\approx \sum^{i-j}_{l=0} \binom{\floor{Mt}}{x+j-i+l}\sumd{i<k_{m}<j+1}{\sum^{N}_{m=1} k_{m}=l}\prod^{N}_{m=1}(\frac{a_{k_{m}} }{M})^{k_{m}}}.
\end{eqalign}
To have it match with \Schutz formula we set $a_{k}=a$ and we also bring inside the factor  $(\frac{a}{M})^{x_{i}-y_{i}}$ into the ith row.
\begin{eqalign}
&\LARGE{(\frac{a}{M})^{x_{i}-y_{i}}\sum^{i-j}_{l=0} \binom{\floor{Mt}}{x+j-i+l}\sumd{i<k_{m}<j+1}{\sum^{N}_{m=1} k_{m}=l}(\frac{a}{M})^{l}}.    
\end{eqalign}
As above for the binomial term we use Stirling's formula
\begin{eqalign}
&\LARGE{\binom{\floor{Mt}}{x+j-i+l}\approx \frac{1}{(x+j-i+l)!}\frac{1}{(\floor{Mt}-(x+j-i+l))^{-(x+j-i+l)}}}.
\end{eqalign}
For the sum term we have
\begin{eqalign}
&\LARGE{\sumd{i<k_{m}<j+1}{\sum^{N}_{m=1} k_{m}=l} 1 = \binom{l+j-i-1}{j-i-1}}.    
\end{eqalign}
All together yield
\begin{eqalign}
&\sum^{\infty}_{l=0}(\frac{a}{M})^{x_{i}-y_{i}+l-(x+j-i+l)} \binom{l+j-i-1}{j-i-1} \frac{t^{x+j-i+l}}{(x+j-i+l)!}.
\end{eqalign}
By taking $y_{j}-y_{i}=j-i$ we are left with
\begin{eqalign}
&\sum^{\infty}_{l=0}  \binom{l+j-i-1}{j-i-1} \frac{t^{x+j-i+l}}{(x+j-i+l)!}.    
\end{eqalign}
   
\end{proofs}

%%%%%%%%%%%%%%%%%%%%%%%%%%%%%%%%%%%%%%%%%%%%%%%%%%%%%%%%%%%%%%%%%%%%%%%%%%%%%%%%%%%%%%%%%%%%%%%%%%%%%%%%%%%%%%%%%%%%%%%%%%%%%%%%%%%%%%%%%%%%%%%%%%%%%%%%%%%%%%%%%%%%%%%%%%%%%%%%%%%%%%%%%%%%%%%%%%%%%%%%%%%%%%%%%%%%%%%%%%%%%%%%%%%%%%%%%%%%%%%%%%%%%%%%%%%%%%%%%%%%%%%%%%%%%%%%%%%%%%%%%%%%%%%%%%%%%%%%%%%%%%%%%%%%%%%%%%%%%%%%%%%%%%%%%%%%%%%%%%%%%%%%%%%%%%%%%%%%%%%%%%%%%%%%%%%%%%%%%%%%%%%%%%%%%%%%%%%%%%%%%%%%%%%%%%%%%%%%%%%%%%%%%%%%%%%%%%%%%%%%%%%%%%%%%%%%%%%%%%%%%%%%%%%%%%%%%%%%%%%%%%%%%%%%%%%%%%%%%%%%%%%%%%%%%%%%%%%%%%%%%%%%%%%%%%%%%%%%%%%%%%%%%%%%%%%%%%%%%%%%%%%%%%%%%%%%%%%%%%%%%%%%%%%%%%%%%%%%%%%%%%%%%%%%%%%%%%%%%%%%%%%%%%%%%%%%%%%%%%%%%%%%%%%%%%%%%%%%%%%%%%%%%%%%%%%%%%%%%%%%%%%%%%%%%%%%%%%%%%%%%%%%%%%%%%%%%%%%%%%%%%%%%%%%%%%%%%%%%%%%%%%%%%%%%%%%%%%%%%%%%%%%%%%%%%%%%%%%%%%%%%%%%%%%%%%%%%%%%%%%%%%%%%%%%%%%%%%%%%%%%%%%%%%%%%%%%%%%%%%%%%%%%%%%%%%%%%%%%%
\newpage
\section{List of further directions}\label{furtherresearchdirections}\label{part:researpappend}

\begin{researchproblem}
The natural extension is to obtain the cylindrical analogous formulas for each of the cases described in \cite{DW08}.   
\end{researchproblem}
~\\
\begin{researchproblem}
In \cite{iwao2023free}, they used free-fermionic presentations to obtain transition formulas similar to \cite{DW08}. As suggested by one of the authors, K.Motegi, the duality relation in \cite[Theorem 3.2]{iwao2023freeSchur} is the analogue of the inversion identity $\Lambda\ast \Pi(\lambda,\mu)=1_{\lambda=\mu}$ in \cite[Proposition 3]{DW08}. So there is a natural question of finding a cylindrical version of the Boson-Fermion correspondence. 
\end{researchproblem}
~\\
\begin{researchproblem}
Another approach in proving the identity $\Lambda\ast \Pi(\lambda,\mu)=1_{\lambda=\mu}$ was trying to reprove \cite[Proposition 3]{DW08} using techniques from Schubert calculus \cite{lam2014k} because Schubert calculus has a natural cylindrical analogue described in \cite{postnikov2005affine}.
\end{researchproblem}
~\\
\begin{researchproblem}
Interestingly, in \parencite{hamel1995lattice}, they define the \textit{row-flagged supersymmetric Schur function} as
\begin{eqalign}
s_{\lambda/\nu}(a,b,c,d; x,y)=\sum_{\nu}s_{\lambda/\nu}^{*}(c,d; y)s_{\nu/\mu}(a,b;x),   
\end{eqalign}
and prove that
\begin{eqalign}
s_{\lambda/\nu}(a,b,c,d; x,y)=\det\para{h_{\lambda_{i}-\mu_{j}-i+j} (x_{a_{j}},...,x_{b_{i}})/(y_{c_{j}},...,y_{d_{i}})}.
\end{eqalign}
So naturally, one can try to extend those notions to the cylinder case and thus make sense of 
\begin{eqalign}
Q_{n}=\Pi_{p}\ast P_{n}\ast K_{p}
\end{eqalign}
as a cylindrical supersymmetric Schur function. 
\end{researchproblem}

\begin{researchproblem}
In the spirit of \cite{borodin2007periodic}, it will be interesting to construct the corresponding periodic Schur process corresponding to this current tableau where $\lambda_{1}\leq \lambda_{N}+L-N$. 
\end{researchproblem}
\subsection{Does periodic TASEP have a Pfaffian formula?}
In the recent article \parencite{huh2023bounded}, they study similar iterated sums satisfying $k_1 + \dots + k_m = 0$ and write them in terms of Pfaffians in terms of some matrices
\begin{equation}\label{eq:Pfaffianformulaforsumoverzerok}
\sum_{\lambda \in \Par(m,w)}
\underset{k_1 + \dots + k_m = 0}{\sum_{k_1, \dots, k_m \in \Z}} 
 u(\lambda) \det_{1 \le i, j \le m} \left( e_{\lambda_i-i+j+Nk_i}(\vx) \right)= \Pf \left( T_p A T_p^t \right)   
\end{equation}
where $e_n(x)$ is the elementary symmetric polynomial, the set $\Par(m,w),n,m\in \mathbb{N}$ denotes
the set of partitions of length at most $m$ satisfying $\lambda_1 -
\lambda_m \le w$,  the $u(\lambda)$ is a generic function of partition $\lambda$ and $T_{p}$ is diagonal-block matrix and $A=A(u)$ is a skew-symmetric matrix. One challenge in applying this formula in the setting of \cref{eq:contourformSchutzcircle} was that there the input $x$ also depends on $Lk_{i}$ whereas in \cref{eq:Pfaffianformulaforsumoverzerok} they consider infinite tupple $\vx$, and so their bijection procedure is only focused on the index $\lambda_i-i+j+Nk_i$.

\begin{researchproblem}
Do the techniques in \parencite{huh2023bounded} generalize to the current setting to provide Pfaffian formulas for each of the three operators in $Q_{n}=\Pi_{p}\ast P_{n}\ast K_{p}$?
 
\end{researchproblem}
However, \cref{eq:Pfaffianformulaforsumoverzerok} can be useful in proving the inversion identity and evaluating to evaluating $Q_{n}=\Pi_{p}\ast P_{n}\ast K_{p}$ because here the convolution is over cylindrical partitions and operators over cylindrical diagrams.
\begin{researchproblem}
Starting from the formula
\begin{eqalign}
Q_{n}=\Pi_{p}\ast P_{n}\ast K_{p},
\end{eqalign}
does it make sense to try to obtain a biorthogonalization expression for each of the three operators and then take separate limits? Since there is a version of \textit{cylindrical Cauchy Binet } \cref{eq:Pfaffianformulaforsumoverzerok}, there is some hope that there is a cylindrical version of biorthogonalization as done in \cite[Proposition 2.10]{johansson2006random}.
\end{researchproblem}

\section{Challenges and attempts in applying the framework \parencite{BFPS} to the formula \cref{eq:contourformSchutzcircle}}\label{sec:frameworkchallenges}
\subsection{Lack of upper triangular structure}
Since we see the same kernel as in $\ZTASEP$, one can naturally directly try the framework in \parencite{BFPS}. The identity 
\begin{equation}\label{eq:identitycontourone}
F_{n+1}(x,t)=\sum_{y\geq x}F_{n}(y,t)    
\end{equation}
was used to prove \parencite[Lemma 3.2]{BFPS} where they express the transition formula in terms of Gelfand-Tsetlin patterns (GTP)
\begin{lemma}\label{lemma32BFPS}
We have
\begin{equation}
P(X(t)=x|X(0)=y)=\det[(F_{i-j}(x_{N+1-i}-y_{j};t))_{1\leq i,j\leq N}] =\sum_{\mathcal{D}}\det[(F_{-j}(x_{i+1}^{N}-y_{N-j};t))_{0\leq i,j\leq N-1}]   ,    
\end{equation}
where the sum is over the following set
\begin{equation}
 \mathcal{D}=\set{\para{x_{i}^{j}}_{2\leq i<j\leq N}: x_{i}^{j}>x_{i}^{j+1},x_{i}^{j}\geq x_{i-1}^{j-1}},   
\end{equation}
and $x^{j}_{1}:=x_{k},$ for $k=1,...,N$.
\end{lemma}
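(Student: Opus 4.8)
The plan is to derive the right-hand side from the left by iterating the summation identity \eqref{eq:identitycontourone} and expanding the resulting determinant by multilinearity in its rows. The key observation is that \eqref{eq:identitycontourone} lifts from a single matrix entry to a whole row: for each $j$ one has $F_{p}(a-y_{j};t)=\sum_{b\ge a}F_{p-1}(b-y_{j};t)$ with a summation variable $b$ that does not depend on $j$, so the $i$-th row $\bigl(F_{i-j}(x_{N+1-i}-y_{j};t)\bigr)_{1\le j\le N}$ equals $\sum_{b\ge x_{N+1-i}}\bigl(F_{(i-1)-j}(b-y_{j};t)\bigr)_{1\le j\le N}$. Consequently, one application of \eqref{eq:identitycontourone} to a row introduces exactly one summation that multilinearity of the determinant pulls to the front.

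I would then use this to kill the positive indices. In row $i$ the entries in columns $j<i$ carry the positive indices $i-j$, while row $1$ already has all indices $\le 0$; applying the lifted relation $i-1$ times to row $i$ for $i=2,\dots,N$ turns every entry of that row into one of the form $F_{1-j}(\,\cdot\,-y_{j};t)$ and produces a nested chain of summation variables $x_{N+1-i}\le b_{i}^{(1)}\le\cdots\le b_{i}^{(i-1)}$. Pulling all of these out, $\det\bigl[F_{i-j}(x_{N+1-i}-y_{j};t)\bigr]$ becomes a sum over all the $b_{i}^{(k)}$ of determinants whose entries now all have non-positive index, i.e.\ of exactly the template shape occurring on the right-hand side. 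A bookkeeping check shows the numerology is right: the $N-1$ terminal variables $b_{i}^{(i-1)}$, together with the fixed argument $x_{N}$ of row $1$, are the $N$ top-level arguments $x_{\bullet}^{N}$ appearing inside the surviving determinant, while the remaining $\binom{N-1}{2}$ internal variables $b_{i}^{(k)}$ with $k<i-1$ are in bijection with the entries $x_{i}^{j}$, $2\le i<j\le N$, of the pattern $\mathcal{D}$.

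The remaining work is to identify the summation domain with $\mathcal{D}$ and to reconcile the two forms of the determinant. The weak inequalities $b_{i}^{(k)}\le b_{i}^{(k+1)}$ are immediate from \eqref{eq:identitycontourone}; the rest of the interlacing constraints defining $\mathcal{D}$ --- the characteristic mixture of strict ``$>$'' between consecutive columns and weak ``$\ge$'' between consecutive levels of a fermionic Gelfand--Tsetlin pattern --- must be extracted by combining these with the given strict orderings $x_{N}<\cdots<x_{1}$, $y_{1}<\cdots<y_{N}$ and with the vanishing of any minor $\det[F_{-(j-1)}(r_{i}-y_{j};t)]$ in which two of the arguments $r_{i}$ ``cross'', so that only the terms indexed by $\mathcal{D}$ survive; the super-exponential decay of $F_{p}(y;t)$ in $y$ makes all the sums absolutely convergent and legitimizes the interchange of $\sum$ and $\det$. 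The main obstacle is precisely this last step: showing that the nested summation region produced by the iteration coincides, after the reindexing of rows and columns implicit in the two presentations of the Sch\"utz determinant, with the conditions $x_{i}^{j}>x_{i}^{j+1}$ and $x_{i}^{j}\ge x_{i-1}^{j-1}$, and that no spurious configurations remain. An alternative route to the same statement is to read the determinant through the Lindstr\"om--Gessel--Viennot lemma and insert the pattern levels as the positions of the non-intersecting paths at successive times, where the interlacing is geometrically forced.
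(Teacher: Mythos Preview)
Your proposal is correct and matches what the paper indicates: the paper does not give its own proof of this lemma but quotes it from \parencite[Lemma~3.2]{BFPS}, explicitly pointing to the summation identity \eqref{eq:identitycontourone} as the device used there. The BFPS argument is exactly the row-by-row iteration of \eqref{eq:identitycontourone} together with multilinearity of the determinant, with the interlacing constraints of $\mathcal{D}$ arising from the antisymmetry (vanishing on coincidences) that you identify as the remaining step.
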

This lemma is key for several reasons. One is that the indicator over the set $\mathcal{D}$ can be expressed in terms of a product of determinants and thus express the transition formula as a determinantal point measure. Second, it turns
\begin{equation}
F_{i-j}\mapsto     F_{-j}= \oint_{|w|=R}e^{t(w-1)}\frac{(w-1)^{j}}{w^{x-y_{N+1-j}+j+1}}\frac{dw}{2\pi i},
\end{equation}
where now this contour no longer has a pole at $w=1$ and so one obtains yet another identity
\begin{equation}\label{eq:identitycontourtwo}
F_{n+1}(x,t)=-\sum_{y< x}F_{n}(y,t).       
\end{equation}
This identity is the key reason that then a certain matrix-system becomes upper triangular and thus giving existence of a biorthogonal representation for the kernel.\\
In \cref{eq:contourformSchutzcircle}, the \cref{eq:identitycontourone} is still true. So one can again obtain a set $\mathcal{D}$. However, now the mapping is
\begin{equation}
F_{i-j-Nk_{i}}\mapsto     F_{-j-Nk_{i}}= \oint_{|w|=R}e^{t(w-1)}\frac{(w-1)^{j+Nk_{i}}}{w^{x-y_{N+1-j}+j+Nk_{i}+1}}\frac{dw}{2\pi i}.
\end{equation}
So when $k_{i}<0$, this contour will still have a pole at $w=1$ and so the \cref{eq:identitycontourtwo} is no longer true and thus the corresponding matrix is no longer upper triangular. From here one could try multiple ideas. One could keep applying \cref{eq:identitycontourone} to remove all $k_{i}<0$ (and even the $k_i>0$ using $ F_{n}(x)=\sum_{y\in [x,x+1]} (-1)^{x-y}F_{n+1}(y)$) but then there is no Gelfand-Tsetlin pattern and thus no clear way to write the transition as a determinantal point measure.
\begin{researchproblem}
Does \cref{eq:contourformSchutzcircle} have a representation in terms of some type of Gelfand-Tsetlin patterns?
\end{researchproblem}

%%%%%%%%%%%%%%%%%%%%%%%%%%%%%%%%%%%%%%%%%%%%%%%%%%%%%%%%%%%%%%%%%%%%%%%%%%%%%%%%%%%%%%%%%%%%%%%%%%%%%%%%%%%%%%%%%%%%%%%%%%%%%%%%%%%%%%%%%%%%%%%%%%%%%%%%%%%%%%%%%%%%%%%%%%%%%%%%%%%%%%%%%%%%%%%%%%%%%%%%%%%%%%%%%%%%%%%%%%%%%%%%%%%%%%%%%%%%%%%%%%%%%%%%%%%%%%%%%%%%%
\subsection{Attempting to reduce to finite sums}
A key challenge in taking limits of the \cref{eq:contourformSchutzcircle} is that we are getting infinitely many sums for $k\in \mathbb{Z}^{\infty}$. In the following lemma \cite[lemma 8]{povolotsky2007determinant}, they showed that the contour simplifies the range of the vector $k\in \mathbb{Z}^{N}$. 
\begin{lemma}\label{tuplereductionPriezzhev}
Let $(x_{1},...,x_{N}),(y_{1},...,y_{N})$ satisfy $1\leq a_{1}\leq...\leq a_{N}\leq L$, then  the necessary conditions for the product
\begin{equation}\label{tuplereductionPriezzhevformula}
\prod_{i=1}^{N}\int_{\Gamma^{\infty}}\para{\frac{1+\lambda z_{i}}{1-z_{i}^{-1}}  }^{i-\sigma(i)+Nk_{i}-\sum_{j=1}k_{j}}z_{i}^{-x_{i}+y_{\sigma(i)-Lk_{i}}}\frac{\dz_{i}}{2\pi i z_{i}}    
\end{equation}
to be non-zero are $k_{i}\in \set{-1,0,1 }$.
\end{lemma}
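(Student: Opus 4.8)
The plan is to exploit that the product over $i$ factors, so that it vanishes as soon as a single factor does. Writing $\frac{1+\lambda z}{1-z^{-1}}=\frac{z(1+\lambda z)}{z-1}$ and setting $m_{i}:=i-\sigma(i)+Nk_{i}-\sum_{j}k_{j}$, $p_{i}:=-x_{i}+y_{\sigma(i)}-Lk_{i}$, the $i$-th factor is
\[
I_{i}=\int_{\Gamma^{\infty}}\frac{z^{\,m_{i}+p_{i}-1}(1+\lambda z)^{m_{i}}}{(z-1)^{m_{i}}}\,\frac{\dz}{2\pi i},
\]
the integral of a rational function whose only finite singularities are $z=0$, $z=1$ (present exactly when $m_{i}>0$) and $z=-1/\lambda$ (present exactly when $m_{i}<0$). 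So it suffices to decide for which $(m_{i},p_{i})$ one has $I_{i}\neq 0$ and to translate the answer into a constraint on $k_{i}$.

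The second step is a residue computation. By the residue theorem $I_{i}$ equals the coefficient of $z^{-1}$ in the Laurent expansion of the integrand valid in the annulus picked out by $\Gamma^{\infty}$ --- the contour of \cite{povolotsky2007determinant} encloses $z=0$ and $z=1$ but not $z=-1/\lambda$. The relevant coefficients are governed by the generating identity $\sum_{n}c_{n}t^{n}=\para{\frac{\lambda+t}{1-t}}^{m_{i}}$, where $c_{n}$ is the $z^{-1}$--coefficient of $z^{\,n-1}(1+\lambda z)^{m_{i}}(z-1)^{-m_{i}}$ expanded at $\infty$. From this one reads that the Laurent series of the integrand at $\infty$ has top power $m_{i}+p_{i}-1$, its series at $0$ has bottom power $m_{i}+p_{i}-1$, and that on the admissible side the coefficient is a \emph{nonzero} Laurent polynomial in $\lambda$: its extreme coefficient in $\lambda$ is, up to sign, the single binomial coefficient $\binom{|m_{i}|+|m_{i}+p_{i}|-1}{|m_{i}+p_{i}|}$, hence $\neq 0$ (the degenerate case $m_{i}=0$, where the admissible window collapses to $m_{i}+p_{i}=0$, is immediate). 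Combining the contributions of $z=0$ and $z=1$ this yields a \emph{two--sided} constraint: $I_{i}\neq 0$ forces $m_{i}+p_{i}\geq 0$ when $m_{i}>0$, $m_{i}+p_{i}\leq 0$ when $m_{i}<0$, and $m_{i}+p_{i}=0$ when $m_{i}=0$.

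The last step is bookkeeping. The map $k_{i}\mapsto m_{i}$ is affine and increasing with slope $N$, while $k_{i}\mapsto m_{i}+p_{i}=(i-\sigma(i))+(y_{\sigma(i)}-x_{i})-\sum_{j}k_{j}+(N-L)k_{i}$ is affine and \emph{decreasing} with slope $N-L\leq -1$ (using $L-N\geq 1$). Hence once $k_{i}$ rises far enough to make $m_{i}$ positive the quantity $m_{i}+p_{i}$ is already negative, and symmetrically once $k_{i}$ falls far enough to make $m_{i}$ negative it is already positive --- so $k_{i}$ is trapped in a bounded interval, lying between $\frac{(i-\sigma(i))+(y_{\sigma(i)}-x_{i})-\sum_{j}k_{j}}{L-N}$ and $\frac{\sum_{j}k_{j}-(i-\sigma(i))}{N}$. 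Feeding in $|i-\sigma(i)|\leq N-1$, the positional bound $1\leq a_{1}\leq\cdots\leq a_{N}\leq L$, and the value of $\sum_{j}k_{j}$ forced by the outer $z$--contour (the same mechanism as the step ``$L(\sum k_{i}-\frac{a_{i}}{N})=0$'' in \cref{lem:RintformulaPTASEP}), one checks that this interval meets $\zz$ only in $\{-1,0,1\}$, which is the claim.

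The hard part will be the middle step: pinning down the exact two--sided window for $I_{i}\neq 0$ and, above all, verifying that the $z^{-1}$--coefficient of the integrand does not vanish \emph{identically} in $\lambda$ anywhere inside the window rather than merely generically --- this is what upgrades the crude Laurent--support estimate to the sharp statement, and it is where the generating identity does the real work. A secondary nuisance is that the sign of $m_{i}$ has to be tracked case by case, since the poles migrate between $z=1$ and $z=-1/\lambda$, and the coupling through $\sum_{j}k_{j}$ forces the $N$ constraints to be solved simultaneously rather than factor by factor.
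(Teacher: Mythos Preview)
The paper does not actually contain a proof of this lemma: it is quoted verbatim as \cite[lemma 8]{povolotsky2007determinant} and then used as motivation for the subsequent discussion, so there is no ``paper's own proof'' to compare your attempt against. What follows is therefore a standalone assessment of your proposal.

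Your overall architecture --- factor the product, analyse the vanishing of a single factor $I_{i}$ by residues, and then convert the resulting two--sided Laurent--support window into an integer bound on $k_{i}$ --- is the natural one and is almost certainly the shape of the argument in \cite{povolotsky2007determinant}. The residue analysis you describe (tracking the poles at $0$, $1$, $-1/\lambda$ according to the sign of $m_{i}$, and reading off the admissible window from the support of the Laurent expansion at $\infty$) is correct in outline, and your remark that the extremal $\lambda$--coefficient is a single nonvanishing binomial is the right way to rule out accidental cancellation.

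Where the proposal is genuinely incomplete is the final ``bookkeeping'' paragraph. First, the slope you quote for $k_{i}\mapsto m_{i}$ is $N$ only if $\sum_{j}k_{j}$ is held fixed, but $k_{i}$ appears in that sum; if you vary $k_{i}$ with the other $k_{j}$ frozen the slope is $N-1$, and if you impose $\sum_{j}k_{j}=\mathrm{const}$ you cannot vary $k_{i}$ alone at all. You flag this coupling at the end, but the monotonicity argument as written does not survive it. Second, even granting the window $m_{i}(m_{i}+p_{i})\geq 0$, the claim that it forces $k_{i}\in\{-1,0,1\}$ does not follow from the inequalities you list: plugging in $|i-\sigma(i)|\leq N-1$ and $|y_{\sigma(i)}-x_{i}|\leq L-1$ alone leaves room for $|k_{i}|=2$ unless $L$ is large relative to $N$. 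The actual argument in \cite{povolotsky2007determinant} uses more than the crude bounds --- in particular the global constraint $\sum_{i}m_{i}=0$ together with the ordering of the $x$'s and $y$'s --- to close the gap, and that piece is missing from your sketch.
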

Since the kernel in \cref{eq:contourformSchutzcircle} differs from \cref{tuplereductionPriezzhevformula} by containing an exponential term $e^{t(w-1)}$, we attempted to remove it. Here is a lemma that might have independent interest. In the spirit of writing the transition formula in terms of path-decomposition, we found the following representation of the kernel $e^{-t(I+\nabla^{-})}(x-y)$.
\begin{lemma}\label{Rtsplittingscheme}
For
\begin{eqalign}
R_{t}(x-y)=e^{-t(I+\nabla^{-})}(x-y)=e^{-t}\frac{t^{x-y}}{(x-y)!}\ind{x\geq y}=\frac{1}{2\pi i}\oint_{\Gamma_{0}}\frac{e^{t(w-1)}}{w^{x-y+1}} \dw,    
\end{eqalign}
we have the limiting formula
\begin{equation} 
R_{t}(x,y)=e^{-t}\lim_{n\to+\infty}\text{\Large$ \ast $}^{n}P_{t/n}(x,y),     
\end{equation}
where $\text{\Large$ \ast $}^{n}$ means nth-convolution and $P_{t/n}(x,y):=\left(\frac{t}{n}\right)^{x-y}\ind{y\in [x-1,x]}$, which also has the contour formula
\begin{eqalign}
\text{\Large$ \ast $}^{n}P_{t/n}(x,y)=&  \frac{1}{2\pi i}\oint_{\Gamma_{0}}\frac{(1+\frac{tw}{n})^{n}}{w^{x-y+1}}\dw=\left(\frac{t}{n}\right)^{x-y}\ind{x\geq y\geq x-n  }\para{\binom{n}{x-y}+\binom{n}{x-y-1}}.
\end{eqalign}
\end{lemma}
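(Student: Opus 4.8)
The plan is to prove \cref{Rtsplittingscheme} in two stages: first establish the contour/combinatorial formula for the $n$-fold convolution $\text{\Large$ \ast $}^{n}P_{t/n}$, and then pass to the limit $n\to\infty$. First I would verify the single-step contour representation
\begin{eqalign}
P_{t/n}(x,y)=\left(\tfrac{t}{n}\right)^{x-y}\ind{y\in[x-1,x]}=\frac{1}{2\pi i}\oint_{\Gamma_{0}}\frac{1+\frac{tw}{n}}{w^{x-y+1}}\dw,
\end{eqalign}
which is immediate since the integrand has only the pole at $w=0$ and its residue picks out the coefficient of $w^{x-y}$ in $1+\frac{tw}{n}$, giving $1$ when $x-y=0$ and $\frac tn$ when $x-y=1$. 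Convolution in the spatial variable corresponds to multiplication of these generating functions (a shift $x\mapsto x+1$ multiplies by $w$ and convolution sums over the intermediate variable, collapsing the product of two contour integrals into one), so by induction on $n$,
\begin{eqalign}
\text{\Large$ \ast $}^{n}P_{t/n}(x,y)=\frac{1}{2\pi i}\oint_{\Gamma_{0}}\frac{\left(1+\frac{tw}{n}\right)^{n}}{w^{x-y+1}}\dw.
\end{eqalign}
Expanding $\left(1+\frac{tw}{n}\right)^{n}=\sum_{k=0}^{n}\binom{n}{k}\left(\frac tn\right)^{k}w^{k}$ by the binomial theorem and extracting the coefficient of $w^{x-y}$ gives $\left(\frac tn\right)^{x-y}\binom{n}{x-y}$ — but one must be careful: the claimed formula has the extra term $\binom{n}{x-y-1}$. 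This arises because $P_{t/n}$ is supported on $\{x-1,x\}$, not just $\{x\}$; re-examining, the single-step generating function is $\frac{1+\frac{tw}{n}}{w}$ relative to the base point, but the convention in the statement absorbs a shift so that $\text{\Large$ \ast $}^{n}P_{t/n}$ corresponds to $w^{-(x-y+1)}(1+\frac{tw}{n})^n$ integrated — and here I would carefully track the index bookkeeping so that the coefficient extraction yields $\binom{n}{x-y}+\binom{n}{x-y-1}$, with the support constraint $x\ge y\ge x-n$ coming from the requirement $0\le x-y\le n$ (resp. $0\le x-y-1\le n-1$) for the binomial coefficients to be nonzero.

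For the limit, I would argue as follows. Fix $x\ge y$ (when $x<y$ both sides are zero: $R_t$ is supported on $x\ge y$, and for $n$ large the convolution is too). Write $m:=x-y\ge 0$. Then
\begin{eqalign}
\text{\Large$ \ast $}^{n}P_{t/n}(x,y)=\left(\tfrac{t}{n}\right)^{m}\left(\binom{n}{m}+\binom{n}{m-1}\right)
=\frac{t^{m}}{n^{m}}\cdot\frac{n!}{m!(n-m)!}\left(1+\frac{m}{n-m+1}\right).
\end{eqalign}
As $n\to\infty$, $\frac{n!}{(n-m)!\,n^{m}}=\prod_{j=0}^{m-1}\frac{n-j}{n}\to 1$, and the parenthetical correction $\to 1$, so the whole expression converges to $\frac{t^{m}}{m!}=\frac{t^{x-y}}{(x-y)!}$. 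Multiplying by the prefactor $e^{-t}$ then gives exactly $R_t(x,y)=e^{-t}\frac{t^{x-y}}{(x-y)!}\ind{x\ge y}$, matching the definition of $R_t=e^{-t(I+\nabla^{-})}$. Finally, the equality $R_t(x-y)=\frac{1}{2\pi i}\oint_{\Gamma_0}\frac{e^{t(w-1)}}{w^{x-y+1}}\dw$ follows from expanding $e^{tw}=\sum_k \frac{(tw)^k}{k!}$ and extracting the coefficient of $w^{x-y}$, which is precisely $\frac{t^{x-y}}{(x-y)!}$ for $x\ge y$ and $0$ otherwise.

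The main obstacle I expect is \textbf{not} the limit — which is elementary — but rather getting the index conventions exactly right so that the two-term binomial expression $\binom{n}{x-y}+\binom{n}{x-y-1}$ emerges cleanly from the contour integral, and confirming that $(1+\frac{tw}{n})^n$ rather than $(1+\frac{t}{nw})^n$ or some other variant is the correct integrand after the support-shift in the definition of $P_{t/n}$. Once the $n$-step formula is pinned down, one should also note uniformity is not an issue since for each fixed $(x,y)$ the convolution is eventually a fixed finite sum, so no dominated-convergence machinery is needed; pointwise convergence of the explicit expression suffices.
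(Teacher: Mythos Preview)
Your approach is correct and essentially mirrors the paper's: the paper first argues via the $Z$-transform $\widehat{R_t}(z)=e^{tz}=\lim_n(1+tz/n)^n=\lim_n(\widehat{P_{t/n}})^n$ and then inverts, which is exactly your contour/generating-function argument; for the limit the paper uses the same elementary asymptotic $\binom{n}{m}/n^m\to 1/m!$ that you do.

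Your hesitation about the extra term $\binom{n}{x-y-1}$ is well-founded and is not a defect in your argument. Expanding $(1+\tfrac{tw}{n})^n$ and extracting the coefficient of $w^{x-y}$ gives exactly $(t/n)^{x-y}\binom{n}{x-y}$, nothing more. The paper also offers a direct combinatorial expansion of the $n$-fold convolution (summing the indicators over the $n-1$ intermediate points $z_1,\dots,z_{n-1}$); carried out carefully this produces
\[
\sum_{k=0}^{n-1}\binom{n-1}{k}\,\ind{y+k\in[x-1,x]}
=\binom{n-1}{x-y}+\binom{n-1}{x-y-1}
=\binom{n}{x-y}
\]
by Pascal's rule, so the contour and combinatorial answers do agree. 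The two-term display in the lemma statement has an index slip ($n$ in place of $n-1$); either version has the same limit $t^{x-y}/(x-y)!$, so your limiting argument goes through unchanged and there is no missing bookkeeping for you to chase down.
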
  
 \begin{proof}
Taking the Z-transform of $R_{t}$ gives
\begin{equation}
\widehat{R_{t}}(z):=\sum_{k\in \mathbb{Z}}R_{t}(k)z^{k}=e^{tz}=  \lim_{n\to+\infty}\left(1+\frac{tz}{n} \right)^{n}.  
\end{equation}
For each factor we have
\begin{equation} 
\left(1+\frac{tz}{n} \right)=\sum_{k\in \mathbb{Z}}\ind{k=0,1}   \left(\frac{t}{n}\right)^{k}z^{k} =\widehat{P_{t/n}}(z).
\end{equation}
Therefore, by inverting we get the desired formula. We can also get this directly by evaluating the limit:
\begin{equation}
\lim_{n\to+\infty}\text{\Large$ \ast $}^{n}P_{t/n}(x,y).    
\end{equation}
We have
\begin{align}
\text{\Large$ \ast $}^{n}P_{t/n}(x,y)=&\sum  \left(\frac{t}{n}\right)^{x-z_{1}}\hspace{-0.5cm}\ind{z_{1}\in [x-1,x]}   \cdots \left(\frac{t}{n}\right)^{z_{n-1}-y}\hspace{-0.5cm}\ind{y\in [z_{n-1}-1,z_{n-1}]}\\ & =\left(\frac{t}{n}\right)^{x-y}\sum  \ind{z_{1}\in [x-1,x]}   \cdots \ind{y\in [z_{n-1}-1,z_{n-1}]}   
\end{align}
The inner most sum is
\begin{align}
&\sum  \ind{z_{n-1}\in [z_{n-2}-1,z_{n-2}]}   \ind{y\in [z_{n-1}-1,z_{n-1}]}   \\
&= \ind{y\in [z_{n-2}-1,z_{n-2}]} +\ind{y+1\in [z_{n-2}-1,z_{n-2}]}
\end{align}
and so by repeating we have
\begin{align}
\sum  \ind{z_{1}\in [x-1,x]}   \cdots \ind{y\in [z_{n-1}-1,z_{n-1}]}  &=\ind{x\geq y\geq x-n  }\sum_{k=0}^{n}\binom{n}{k} \ind{y+k\in [x-1,x]}\\
&=\ind{x\geq y\geq x-n  }\para{\binom{n}{x-y}+\binom{n}{x-y-1}}.   
\end{align}
Using the asymptotic $\lim_{n\to \infty}\frac{1}{n^{x-y}}\binom{n}{x-y}=\frac{1}{(x-y)!}$ we find the desired limit:
\begin{equation}
\left(\frac{t}{n}\right)^{x-y}\ind{x\geq y\geq x-n  }\para{\binom{n}{x-y}+\binom{n}{x-y-1}} \to \frac{t^{x-y}}{(x-y)!}\ind{x\geq y}. 
\end{equation}

\end{proof} 

\begin{researchproblem}
Is it possible to reduce the sums in \cref{eq:contourformSchutzcircle} to be over finite sets?    
\end{researchproblem}

%%%%%%%%%%%%%%%%%%%%%%%%%%%%%%%%%%%%%%%%%%%%%%%%%%%%%%%%%%%%%%%%%%%%%%%%%%%%%%%%%%%%%%%
\subsection{Fractional-windings formula}
In trying to find some version for \cref{lemma32BFPS}, we came across the following representation where now the tupple is in the set $k\in (\zz+\frac{\zz}{L})^{N}$. We tried this representation in hopes of getting more cancellations in the context of the proof of \cref{lemma32BFPS} and ideally give rise to a \textit{cylindrical Gelfand-Tsetlin pattern}.
\begin{proposition}\label{RintformulaPTASEP}
By requiring $\frac{L}{N}\in \mathbb{N}$, we obtain
\begin{equation}
\label{eq:transi_wind}
P(X(t)=x|X(0)=y)=\sum_{\substack{\sum k_{i}=0\\(k_{1},..,k_{N})\in (\zz+\frac{\zz}{L})^{N} }}det[(F_{i-j-Nk_{i}}(x_{N+1-i}-y_{j}-Lk_{i}))_{1\leq i,j\leq N}]  \frac{1}{L^{N}} ,    
\end{equation}
where the domain of each variable is $k_{i}\in \{0, \pm \frac{1}{L}, \pm \frac{2}{L},...,\pm 1,\pm 1\pm \frac{1}{L},...\}$ and as in \Schutz's formula the determinant entry in contour form is:
\begin{equation}
F_{i-j-Nk_{i}}(x_{N+1-i}-y_{N+1-j}-Lk_{i}):= \oint_{\Gamma_{outer}}e^{t(w-1)}\frac{(w-1)^{-(i-j)+Nk_{i}}}{w^{x_{N+1-i}-y_{N+1-j}-Lk_{i}-(i-j)+1+Nk_{i}}}\frac{dw}{2\pi i},
\end{equation}
with $0\leq y_{1}<y_{2}<...<y_{N}\leq L-1$ and $0\leq x_{N}<x_{N-1}<...<x_{1}\leq L-1$. 
\end{proposition}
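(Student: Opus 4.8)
The plan is to re-run the derivation of \cref{lem:RintformulaPTASEP} from the Baik--Liu formula \cref{eq:Betheroots}, but to use the hypothesis $d:=L/N\in\mathbb{N}$ in order to replace the spectral variable $z^{L}$ by $z$ itself; both the refinement of the winding set from $\Z^{N}$ to $(\Z+\tfrac{\Z}{L})^{N}$ and the prefactor $\tfrac{1}{L^{N}}$ will be produced by this replacement.

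First I would recast each Bethe-root sum as a pair of contour integrals exactly as in the proof of \cref{lem:RintformulaPTASEP}: since $f_{ij}(w)=(w-1)^{j-i+N}w^{-x_{i}+y_{j}+i-j+L-N-1}e^{t(w-1)}$ is analytic on $\mathbb{C}\setminus\{0\}$ and $q_{z}'(w)=p'(w)$,
\[
\frac{1}{L}\sum_{w\in R_{z}}\frac{f_{ij}(w)}{q_{z}'(w)}=\oint_{|w|=R}\frac{f_{ij}(w)}{p(w)-z^{L}}\frac{dw}{2\pi i}-\oint_{|w|=1/R}\frac{f_{ij}(w)}{p(w)-z^{L}}\frac{dw}{2\pi i}.
\]
Now $d\in\mathbb{N}$ enters: $p(w)=(w-1)^{N}w^{L-N}=\bigl((w-1)w^{d-1}\bigr)^{N}=g(w)^{N}$, so with $\rho(w):=g(w)^{1/d}$ --- a $d$-th root which, after writing $\rho(w)^{a}=\bigl(\tfrac{w-1}{w}\bigr)^{a/d}w^{a}$, is single-valued on $\mathbb{C}$ slit along $[0,1]$ --- one has $\rho(w)^{L}=p(w)$, and the partial-fraction identity
\[
\frac{1}{A^{L}-B^{L}}=\frac{1}{L}\sum_{\zeta^{L}=1}\frac{1}{A^{L-1}\,(A-\zeta^{-1}B)}
\]
applied with $A=\rho(w)$ and $B=z$ rewrites $\tfrac{1}{p(w)-z^{L}}$ as $\tfrac{1}{L}\sum_{\zeta^{L}=1}\rho(w)^{-(L-1)}\bigl(\rho(w)-\zeta^{-1}z\bigr)^{-1}$.

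Second, I would expand each factor $\bigl(\rho(w)-\zeta^{-1}z\bigr)^{-1}$ as a geometric series in $z$ --- convergent on $|w|=R$, and, read the other way, on $|w|=1/R$ --- and then enlarge the inner circle to a large contour $\Gamma_{outer}$ enclosing the slit $[0,1]$, just as the $p(w)^{-(k+1)}$-contour was enlarged in \cref{lem:RintformulaPTASEP}. This gives
\[
\frac{1}{L}\sum_{w\in R_{z}}\frac{f_{ij}(w)}{q_{z}'(w)}=\frac{1}{L}\sum_{\zeta^{L}=1}\ \sum_{m\in\Z}\zeta^{-m}z^{m}\,I_{ij}(m),\qquad I_{ij}(m):=\oint_{\Gamma_{outer}}\frac{f_{ij}(w)}{\rho(w)^{m+L}}\frac{dw}{2\pi i},
\]
and the crucial gain over \cref{lem:RintformulaPTASEP} is that the powers of $z$ now run over all of $\Z$ instead of only over $L\Z$ --- this is precisely what having $\rho(w)=p(w)^{1/L}$ available buys us. Substituting this into the determinant in \cref{eq:Betheroots}, expanding it multilinearly over the $N$ rows (the labels $\zeta_{i}$ and $m_{i}$ depending only on the row $i$) and then performing the outer integral $\oint_{|z|=r}(\cdot)\tfrac{dz}{2\pi i z}$, the $z$-integral pins the constraint $\sum_{i}m_{i}=0$; writing $k_{i}:=m_{i}/L\in\Z+\tfrac{\Z}{L}$ this becomes $\sum_{i}k_{i}=0$, the $N$ factors of $\tfrac1L$ (together with the sums over the $\zeta_{i}$) collect into the prefactor $\tfrac{1}{L^{N}}$, and the same exponent arithmetic as in \cref{lem:RintformulaPTASEP} identifies $I_{ij}(Lk_{i})$ with $F_{i-j-Nk_{i}}(x_{N+1-i}-y_{j}-Lk_{i})$ in the contour form of the statement; note that the branch points at $w=0$ and $w=1$ of $(w-1)^{Nk_{i}}w^{-Nk_{i}}$, which appear when $k_{i}\notin\Z$, are exactly the endpoints of the slit that $\Gamma_{outer}$ encircles.

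The step I expect to be the main obstacle is the interplay of the root-of-unity sums with these branch cuts once $k_{i}\notin\Z$: when $m+L$ is not divisible by $d$ the integrand $\rho(w)^{-(m+L)}=g(w)^{-(m+L)/d}$ has genuine branch points at $w=0,1$, so moving the small contour out to $\Gamma_{outer}$ is no longer a free deformation and the contribution of the slit must be tracked; and one has to verify that, after resumming the geometric series and the $\zeta_{i}$, the series over $\mathbf{k}\in(\Z+\tfrac{\Z}{L})^{N}$ with $\sum_{i}k_{i}=0$ that emerges is the stated one and that it collapses, on retaining only the integer tuples $\mathbf{k}$, to the already-proven \cref{eq:contourformSchutzcircle}. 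It is at this last reconciliation that $\tfrac{L}{N}\in\mathbb{N}$ is used essentially rather than cosmetically.
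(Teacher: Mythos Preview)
Your partial-fraction route has a genuine gap: the roots-of-unity sums undo the refinement you are trying to produce. After expanding the determinant multilinearly over rows and performing the $z$-integral to get $\sum_i m_i=0$, each row still carries its own sum $\tfrac{1}{L}\sum_{\zeta_i^{L}=1}\zeta_i^{-m_i}$, and this equals $\mathbf{1}\{L\mid m_i\}$, not $1$. Hence the full weight is $\prod_i\mathbf{1}\{L\mid m_i\}$, which forces every $m_i\in L\Z$, i.e.\ $k_i=m_i/L\in\Z$. You therefore land back on the integer-winding formula of \cref{lem:RintformulaPTASEP}, with no surviving $1/L^{N}$ and no fractional $k_i$. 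In short, the identity $\tfrac{1}{A^{L}-B^{L}}=\tfrac{1}{L}\sum_{\zeta^{L}=1}\tfrac{1}{A^{L-1}(A-\zeta^{-1}B)}$ is an exact rewriting of the original kernel $1/(p(w)-z^{L})$; summing over $\zeta$ before (or after) the geometric expansion cannot create new powers of $z$ beyond those already present in \cref{lem:RintformulaPTASEP}.

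The paper's mechanism is different in kind. It does not decompose $1/(p(w)-z^{L})$; instead it multiplies the Bethe-root summand by the tautology $(p(w)/z^{L})^{a/L}=1$ for $w\in R_z$, averages over $a=0,\dots,L-1$, and only then passes to contours around the slit $[0,1]$. The fractional index $a/L$ is injected \emph{before} the residue step, so the subsequent geometric expansion in $z^{L}/p(w)$ yields exponents $L(k-\tfrac{a}{L})$ that genuinely run over $\Z$ rather than $L\Z$; the $z$-integral then pins $\sum_i(k_i-\tfrac{a_i}{L})=0$, and the change of variables $\tilde{k}_i=k_i-\tfrac{a_i}{L}\in\tfrac{1}{L}\Z$ together with the $N$ external factors $\tfrac{1}{L}$ from the $a$-averages delivers both the domain $(\Z+\tfrac{\Z}{L})^{N}$ and the prefactor $1/L^{N}$. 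Your branch-cut concerns are real but secondary; the decisive missing idea is that the fractional powers must be introduced as an external average that equals $1$ on $R_z$, not as a partial-fraction splitting whose $\zeta$-sum collapses.
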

\begin{proof}
First we note that since for $w\in R_{z}$ we have $z^{L}=p(w)\Rightarrow \frac{p(w)}{z^{L}}=1$, we can write the average sum
\begin{equation}
\frac{1}{L}\sum_{w\in R_{z}}\frac{f(w)}{q'_{z}(w)} =\frac{1}{L}\sum _{a=0}^{L-1}\frac{1}{L}\sum_{w\in R_{z}}\frac{f(w)}{q'_{z}(w)} \para{ \frac{p(w)}{z^{L} }}^{\frac{a}{L}},        
\end{equation}
for $a=0,...,L-1$. The function 
\begin{equation}
f(w) \para{ \frac{p(w)}{z^{L}}}^{\frac{a}{L}} =z^{\frac{-aL}{N}}(w-1)^{j-i+N+\frac{N}{L}a}w^{-x_{i}+y_{j}+i-j+L-N-1 +a-\frac{N}{L}a}e^{t(w-1)}
\end{equation}
 is analytic on $\mathbb{C}\setminus [0,1]$, where we removed the branch cut $[0,1]$ for the fractional function $w^{\frac{N}{L}}(w-1)^{-\frac{N}{L}}$. To use the residue theorem
\begin{equation}
\frac{1}{L}\sum _{a=0}^{L-1}\frac{1}{L}\sum_{w\in R_{z}}\frac{f(w)}{q'_{z}(w)} \para{ \frac{p(w)}{z^{L} }}^{\frac{a}{L}}=\frac{1}{L}\sum _{a=0}^{L-1} \oint_{\Gamma_{outer}}\frac{f(w)}{p(w)-z^{L}}\para{ \frac{p(w)}{z^{L}}}^{\frac{a}{L}} \frac{dw}{2\pi i}-\oint_{\Gamma_{inner}}\frac{f(w)}{p(w)-z^{L}}\para{ \frac{p(w)}{z^{L}}}^{\frac{a}{L}} \frac{dw}{2\pi i},
\end{equation}
we need contours $\Gamma_{outer},\Gamma_{inner}$ that are around $[0,1]$ and their topological annulus D with boundaries $\partial_{inner} D:=\Gamma_{inner},\partial_{outer} D:=\Gamma_{outer}$ contains the Bethe roots of $q_{z}(w)$.  As observed in \cites{povolotsky2007determinant,baik2018fluctuations}, one possible choice is to take the level set
\begin{equation}
\Gamma_{inner}:=\{w: \abs{w^{L-N}(w-1)^{N} }=C  \},    
\end{equation}
because it becomes an oval containing $[0,1]$ when $C>(1-\frac{N}{L})^{L-N}(\frac{N}{L})^{N} $. 
\begin{comment}
Alternatively, for our setting we simply need that for $|z|>0$ none of the Bethe roots to intersect a real-neighbourhood of the interval [0,1] i.e. for $z=r$ and $w=x$ we ask the polynomial
\begin{equation}
x^{L-N}(1-x)^{N}=r^{L}\Rightarrow  x^{a+1}-x^{a}+c=0,
\end{equation}
where $a=\frac{L-N}{N}\geq 1$ and $c=r^{\frac{L}{N}}>0$, to have no roots in $[0-\epsilon,1+\epsilon]$ for small $\epsilon>0$ that depends on $r,L,N$.  
\end{comment}
So we also take $|z|>C^{\frac{1}{L}}$ in the z-contour outside the determinant.  Next we use the geometric series for each of the contours. For the first contour we take the contour $\Gamma_{outer}$ to be large enough so that $\abs{\frac{z^{L}}{p(w)}}<1$ and obtain
\begin{equation}
   \oint_{\Gamma_{outer}}\frac{f(w)}{p(w)-z^{L}}\para{ \frac{p(w)}{z^{L}}}^{\frac{a}{L}} \frac{dw}{2\pi i} =\sum_{k\geq 0}z^{L(k-\frac{a}{L})}\oint_{\Gamma_{outer}}\frac{f(w)}{p(w)}\frac{1}{p(w)^{k-\frac{a}{L}}}\frac{dw}{2\pi i}
\end{equation}
and for the second contour we use that $|z|>C^{\frac{1}{L}}\Rightarrow \abs{\frac{p(w)}{z^{L}}}<1$ to obtain
\begin{equation}
  - \oint_{\Gamma_{inner}}\frac{f(w)}{p(w)-z^{L}}\para{ \frac{p(w)}{z^{L}}}^{\frac{a}{L}} \frac{dw}{2\pi i} =\sum_{k\leq -1 }z^{L(k-\frac{a}{L})}\oint_{\Gamma_{inner}}\frac{f(w)}{p(w)}\frac{1}{p(w)^{k-\frac{a}{L}}}\frac{dw}{2\pi i}.
\end{equation}
For the second contour the integrand has no other poles outside $[0,1]$ and so we can enlarge it to the $\Gamma_{outer}$ to match the first one, so all together we find
\begin{equation}
\frac{1}{L}\sum _{a=0}^{L-1}  \sum_{k\in \zz }z^{L(k-\frac{a}{L})}\oint_{\Gamma_{outer}}\frac{f(w)}{p(w)^{k+1-\frac{a}{L}}}\frac{dw}{2\pi i}.
\end{equation}
Finally by pulling out the sums from each row, we obtain a result of the form
\begin{equation}
\frac{1}{L^{N}}\sum _{a_{1},...,a_{N}=0}^{L-1}  \sum_{k_{1},...,k_{N}\in \zz }  z^{L(\sum k_{i}-\frac{a_{i}}{L})-1} F(a,k).  
\end{equation}
Since $L\frac{a}{L }\in \mathbb{N}$, we obtain that for the z-contour to be non-zero we require $L(\sum k_{i}-\frac{a_{i}}{L})=0$. Finally, we do the change of variables $\wt{k}_{i}- k_{i}-\frac{a_{i}}{L}$ to get the summation domain $\set{ 0, \pm \frac{1}{L},... }$.

\end{proof}

\printbibliography
\end{document}